\documentclass[11pt]{article}

\usepackage[margin=1in]{geometry}
\usepackage{amsmath,amssymb,amsthm,mathtools}
\usepackage{enumitem}
\usepackage[colorlinks=true,citecolor=blue,linkcolor=blue,urlcolor=blue]{hyperref}
\hypersetup{
  pdftitle={A Sharp Fuss-Catalan Bound for Norms of Powers of Random Matrices with i.i.d. Entries},
  pdfauthor={Yanjin Xiang, Kun Chen, and Zhihua Zhang},
  pdfkeywords={random matrices, powers of random matrices, operator norm, Fuss-Catalan numbers, high-moment method}
}

\newtheorem{theorem}{Theorem}[section]
\newtheorem{proposition}[theorem]{Proposition}
\newtheorem{lemma}[theorem]{Lemma}
\newtheorem{corollary}[theorem]{Corollary}
\newtheorem{definition}[theorem]{Definition}
\newtheorem{remark}[theorem]{Remark}
\newtheorem{example}[theorem]{Example}

\DeclareMathOperator{\Tr}{Tr}
\DeclareMathOperator{\Var}{Var}
\DeclareMathOperator{\E}{\mathbb E}
\DeclareMathOperator{\Prob}{\mathbb P}

\newcommand{\N}{\mathbb N}
\newcommand{\eps}{\varepsilon}
\newcommand{\ol}{\overline}
\newcommand{\wt}{\widetilde}
\newcommand{\one}{\mathbf 1}

\title{Universality for Products  of Random Matrices with i.i.d. Entries and the Fuss--Catalan Number}

\author{
Yanjin Xiang, Kun Chen, and Zhihua Zhang \\
School of Mathematical Sciences, Peking University\\
\texttt{2401110086@stu.pku.edu.cn}; \texttt{kchen0415@pku.edu.cn}; \texttt{zhzhang@math.pku.edu.cn}
}

\date{\today}
\begin{document}
\maketitle

\begin{abstract}
Let \((w_{ij})_{i,j\ge1}\) be a single infinite array of independent identically
distributed real- or complex-valued entries of mean zero, variance
\(\sigma^2\), and finite fourth moment.  Set
\(W_n=(w_{ij})_{1\le i,j\le n}\) and \(X_n=n^{-1/2}W_n\).  For every fixed
\(k\ge1\), we identify the almost sure limiting operator norm of several
fixed products built from this family.  Define the \(k\)-th freeness
coefficient by
\[
  \gamma_k:=\sqrt{\frac{(k+1)^{k+1}}{k^k}}.
\]
Then we prove
\[
  \|X_n^k\|\to\sigma^k\gamma_k
  \qquad \text{almost surely}.
\]
The same limit holds for products sampled with replacement from any fixed
finite pool of independent copies of \(X_n\); in particular, it holds for the
product of \(k\) independent copies. Thus, the freeness coefficient captures the
non-commuting characteristic between large random matrices 
under the finite fourth moment assumption.  
The improvement of the classical Bai--Yin-type power estimate
from the scale \(\sigma^k(k{+}1)\) to \(\sigma^k \sqrt{k{+}1}\) is a direct corollary
of our result.

The main technical challenge is to prove the upper bound using a high-moment expansion of
\(\E\Tr((X_n^kX_n^{*k})^m)\).  The leading zero-defect trace words are
tree-like and are counted by the Fuss--Catalan number
\[
  F_{k,m}=
  \frac1{km+1}\binom{(k+1)m}{m}.
\]
The combinatorial tool helps to devise a defect-sensitive global enumeration: if
\(L=km\) and
\[
  r=(L+1-v)+(L-q),
\]
then the number of admissible word classes with defect \(r\) is at most
\(F_{k,m}(Cm)^{Dr}\).  This polynomial-in-\(m\) loss, with degree proportional
to the defect, is summable in the logarithmic moment range.
\end{abstract}

\paragraph{Keywords.}
Random matrices; products of random matrices; operator norm; freeness
coefficient; Fuss--Catalan numbers; high-moment method.

\paragraph{2020 Mathematics Subject Classification.}
60B20; 15B52; 05A15; 46L54.

\section{Introduction}

The spectral behavior of large random matrices is a central theme in modern
probability, data science, machine learning,  and high-dimensional statistics; see,
for example, the monograph of Bai and Silverstein \cite{bai-silverstein} and
the matrix concentration perspective developed in \cite{tropp-intro}.  Beyond
the empirical distribution of eigenvalues or singular values, the location of
the spectral edge and the operator norm are often the quantities that determine
stability, conditioning, and the performance of algorithms.  For sums of
independent random matrices, a large body of matrix concentration inequalities
provides flexible spectral-norm bounds, with more recent refinements using
higher-order information and free-probabilistic models to capture truly
noncommutative effects \cite{tropp-second-order,bandeira-boedihardjo-vanhandel,
bandeira-cipolloni-schroeder-vanhandel}.  Products are more delicate: the
order of the objects matters, the usual exponential-moment arguments for sums
do not directly apply, and even for products of independent matrices special
tools are needed \cite{huang-nilesweed-tropp-ward}.

For matrix powers, the difficulty is sharper still, because all objects are the same matrix and the trace expansion contains additional collision patterns that are absent in the
independent-product model.  Much of the sharp operator-norm technology is most
transparent in Gaussian or asymptotically free settings.  The purpose of our paper is to obtain the sharp spectral edge for powers and products under the substantially weaker assumption of a finite fourth
moment, by replacing the Gaussian/free-probabilistic argument with a direct
defect-sensitive high-moment enumeration.

Let \((w_{ij})_{i,j\ge1}\) be a single infinite array of real- or
complex-valued i.i.d. random variables satisfying
\[
  \E w_{11}=0,\qquad \E |w_{11}|^2=\sigma^2,\qquad
  \E |w_{11}|^4<\infty ,
\]
and set \(W_n=(w_{ij})_{1\le i,j\le n}\), \(X_n=n^{-1/2}W_n\).  We denote by
\(\|X_n\|\) the operator norm of \(X_n\).  We work with
this nested coupling throughout, so all almost sure limits are taken on the same
underlying probability space.  More generally,  for \(1\le \ell (\le k) \), let
\(Y_{1, n}, \ldots, Y_{\ell,n}\) be \(\ell\) independent copies of \(X_n\), and let
 \(X_{1, n}, \ldots, X_{k, n}\) be sampled from \(Y_{1, n}, \ldots, Y_{\ell,n}\) with
replacement. In this paper, we do not treat the
regime \(k=k(n)\), and study the universality of the
operator norm of products of the form
\[
  X_{1,n} \cdots X_{k, n},
\]
with particular emphasis on the sharp limiting upper bound.

There are two extreme cases: \(\ell=1\) and \(\ell=k\).  When \(\ell=1\), all
the objects are identical and the problem is to understand
\(\|X_n^k\|\).  When \(\ell=k\), the \(k\) objects are mutually independent.
One of the points of the paper is that these two extreme cases have the same
limiting constant, expressed in terms of freeness coefficients, which are defined via the Fuss--Catalan number.

\subsection{Fuss--Catalan numbers and free circular variables}
\label{sec:preliminaries}

We first give a brief introduction to the Fuss--Catalan numbers and the free-probability terminology.
For integers \(k,m\ge1\), the Fuss--Catalan number is
\begin{equation}
  F_{k,m}:=\frac1{km+1}\binom{(k+1)m}{m},
  \label{eq:Fkm}
\end{equation}
which also appears as a special case of the
Raney numbers; see Raney's power-series reversion formula
\cite{raney} and Stanley's account of Catalan-type enumeration
\cite{stanley-catalan}.  These numbers count, among many equivalent objects, rooted
ordered \((k+1)\)-ary trees with \(m\) internal vertices.  In the current
paper, the same number counts the zero-defect trace words in
\(\Tr((X^kX^{*k})^m)\) (see Sections~\ref{sec:leading-words} and \ref{sec:defect-counting}).

We define the \(k\)-th \emph{freeness} coefficient by
\begin{equation}
  \gamma_k:=
  \left(\lim_{m\to\infty}F_{k,m}^{1/m}\right)^{1/2}
  =
  \sqrt{\frac{(k+1)^{k+1}}{k^k}}.
  \label{eq:gammak}
\end{equation}
Equivalently, \(\gamma_k^2=(k+1)^{k+1}/k^k\).  The elementary inequality
\(\gamma_k\le\sqrt{e(k+1)}\) is responsible for the
\(\sqrt{k}\)-scale in the operator-norm estimates below.

In the free-probabilistic context, \(F_{k,m}\) is the \(m\)-th moment of the
\(k\)-fold free multiplicative convolution of the
Marchenko--Pastur law; see the standard references
\cite{voiculescu-dykema-nica,nica-speicher}.

We also recall the normalization of the circular element. 
In a tracial noncommutative probability
space \((\mathcal A,\tau)\), the state \(\tau\) is the free-probability
analogue of the normalized matrix trace \(n^{-1}\Tr\).  A circular element with
variance one may be written as
\[
  c=\frac{s_1+i s_2}{\sqrt2},
\]
where \(s_1,s_2\) are freely independent centered semicircular elements with
\[
  \tau(s_r)=0,\qquad \tau(s_r^2)=1,\qquad r=1,2.
\]
Then \(\tau(cc^*)=1\), and the free Wick formula gives
\[
  \tau\bigl((c^kc^{*k})^m\bigr)=F_{k,m}.
\]
Indeed, the contributing non-crossing pairings are exactly the balanced
tree-like pairings counted by \eqref{eq:Fkm}.  Hence
\(\|c^k\|=\gamma_k\), consistent with the computations of Oravecz and Larsen
\cite{oravecz,larsen}. This is a reason that we call $\gamma_k$ the \emph{freeness} coefficient.

\subsection{The Main Results}
\label{sec:main-results}

Throughout this paper \(k\) is fixed. Constants denoted by \(C,D,C_1,D_1\)
may depend on \(k\), and constants in Theorem~\ref{prop:moment} may also
depend on the chosen logarithmic moment parameter \(A\).  They never depend on
\(m\) or \(n\), unless stated explicitly.  We write
\(A^{*k}:=(A^*)^k\); in particular,
\(X_n^kX_n^{*k}=X_n^k(X_n^*)^k\).
We now state our main result in the following theorem. 

\begin{theorem}[Powers]
\label{thm:main}
For every fixed \(k\ge1\), we have
\[
  \limsup_{n\to\infty}\|X_n^k\|
  \le
  \sigma^k\gamma_k,
  \qquad \text{a.s.}
\]
and
\[
  \liminf_{n\to\infty}\|X_n^k\|
  \ge
  \sigma^k\gamma_k
  \qquad \text{a.s.}
\]
Consequently, we have
\[
  \|X_n^k\|\to
  \sigma^k\gamma_k
  \qquad \text{a.s.}
\]
\end{theorem}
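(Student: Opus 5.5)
The plan is to split the statement into its two halves and treat them by different methods. The lower bound comes from the limiting singular value distribution. The upper bound comes from a high-moment trace expansion combined with a truncation step and Borel--Cantelli.

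\emph{Lower bound.} I will use the known convergence of the empirical singular value distribution of \(X_n^k\), equivalently of the eigenvalue distribution of \(X_n^kX_n^{*k}\). Under finite second moment it converges almost surely to the law \(\mu_k\) whose \(m\)-th moment is \(\sigma^{2km}F_{k,m}\). This is the \(k\)-fold free multiplicative convolution of Marchenko--Pastur, as recalled in Section~\ref{sec:preliminaries}. If this is not quoted, I will prove it by the moment method: compute \(n^{-1}\E\Tr((X_n^kX_n^{*k})^m)\to\sigma^{2km}F_{k,m}\) by the zero-defect word count, then add a variance bound of order \(O(n^{-2})\), after truncation of the entries at level \(\eta_n\sqrt n\). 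Since \(F_{k,m}^{1/m}\to\gamma_k^2\), the support of \(\mu_k\) has right endpoint \(\sigma^{2k}\gamma_k^2\). Weak convergence then forces \(\liminf_n\|X_n^k\|^2\ge\sigma^{2k}\gamma_k^2\) almost surely, because a positive fraction of the eigenvalues eventually lies above \(\sigma^{2k}\gamma_k^2-\eps\).

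\emph{Upper bound.} First I truncate and renormalize in the Bai--Yin manner. The finite fourth moment and the nested coupling allow me to replace \(w_{ij}\) by \(\hat w_{ij}=w_{ij}\one_{|w_{ij}|\le\delta_n\sqrt n}\), recentered and rescaled, with \(\delta_n\downarrow0\) slowly. Almost surely this changes \(W_n\) only for finitely many \(n\). The centering and variance corrections perturb \(\|X_n\|\), hence \(\|X_n^k\|\) (using \(\limsup\|X_n\|\le2\sigma\) a.s.), by \(o(1)\). Next I take \(m=m_n\sim A\log n\) and bound
\[
\E\Tr\bigl((X_n^kX_n^{*k})^m\bigr)=n^{-km}\sum_{\text{words}}\E\prod\hat w .
\]
Each closed word has \(2L=2km\) steps, with \(L=km\); let \(v\) be its number of distinct vertices and \(q\) its number of distinct edges. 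Words with an edge traversed exactly once contribute zero, and otherwise \(q\le L\). I group words into isomorphism classes indexed by the defect \(r=(L+1-v)+(L-q)\). A class with \(v\) vertices gives at most \(n^{v}\) labelings. Each higher-multiplicity edge contributes a moment at most \((\delta_n\sqrt n)^{\text{mult}-2}\sigma^2\), so a class with defect \(r\) contributes at most \(n^{1-r}\sigma^{2L}(\delta_n^{?}\sqrt n)^{O(r)}\) times something smaller than \(n^{1-r/2}\sigma^{2L}\cdot C^r\). This needs care: I will bound the excess multiplicity by \(O(r)\), so that the truncation factor is at most \(\delta_n^{c}n^{r/2}\) and the net factor is at most \(n\cdot(C\delta_n^{c}/\sqrt n)^{r'}\) in the relevant defect variable. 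The zero-defect classes are tree-like (each edge traversed exactly twice, with the balanced \(X^k X^{*k}\) pattern) and are exactly \(F_{k,m}\) in number. The key combinatorial input is that there are at most \(F_{k,m}(Cm)^{Dr}\) classes of defect \(r\). Summing over \(r\) gives
\[
\E\Tr\bigl((X_n^kX_n^{*k})^m\bigr)\le n\,\sigma^{2km}F_{k,m}\sum_{r\ge0}\Bigl(\tfrac{(Cm)^{D}}{n^{c}}\Bigr)^{r}\le 2n\,\sigma^{2km}F_{k,m},
\]
since \(m\sim A\log n\). Markov's inequality then gives \(\Prob(\|X_n^k\|>\sigma^k\gamma_k(1+\eps))\le 2n F_{k,m}\gamma_k^{-2m}(1+\eps)^{-2m}\). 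Because \(F_{k,m}\le\gamma_k^{2m}\), this is at most \(2n^{1-2A\log(1+\eps)}\), which is summable once \(A\) is large enough. Borel--Cantelli finishes the upper bound.

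\emph{Main obstacle.} The hard step is the defect-sensitive enumeration \(\#\{\text{classes of defect } r\}\le F_{k,m}(Cm)^{Dr}\). Powers of a single matrix allow extra coincidences: the same edge can be reached from different factor positions, giving cycles not seen for independent products. I would handle this with an encoding argument. Traverse the word and record a skeleton, meaning the first-visit tree steps, which yields a \((k+1)\)-ary tree structure counted by \(F_{k,m}\). Then record each of the \(O(r)\) innovation or coincidence steps by its position and target, costing at most \((Cm)^{D}\) each. The delicate point is showing that the skeleton, after deleting the \(O(r)\) marked steps, is still determined up to \(F_{k,m}\) choices, and that the bidirected \(X/X^*\) pattern in \(X^kX^{*k}\) forces the tree to be \((k+1)\)-ary rather than arbitrary.
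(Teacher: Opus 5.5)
Your lower bound is the paper's argument (almost sure convergence of the singular value law of $X_n^k$ to the Fuss--Catalan law, whose right edge is $\sigma^{2k}\gamma_k^2$, plus Portmanteau). The Markov/Borel--Cantelli step is also fine once a moment bound is available. The gap is in your summation over defects, and it interacts badly with your choice of truncation.

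For an admissible class put $a=L-q$ and $b=L+1-v$. The vertex labels give $n^{-L}n^{v}=n^{1-b}$ and the truncated moments give $(\delta_n\sqrt n)^{2a}$, so the class contributes $n\,\delta_n^{2a}n^{a-b}$. Connectivity only yields $b\ge a$, and $b=a$ genuinely occurs. For example, with $k=1$, $m=2$, the word with $i_0=i_2\ne i_1=i_3$ traverses one edge four times and has $a=b=1$. So the excess-multiplicity part of the defect buys no power of $n$, only a factor $\delta_n^2$ per unit. Your bound $n^{1-r/2}$ and your series $\sum_r((Cm)^D/n^c)^r$ are therefore not what the expansion gives. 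With the count $F_{k,m}(Cm)^{Dr}$, the correct sum is
\[
  nF_{k,m}\sum_{a,c\ge0}\bigl((Cm)^{2D}\delta_n^2\bigr)^a\bigl((Cm)^Dn^{-1}\bigr)^c .
\]
Here the $a$-series is $1+o(1)$ at $m\asymp A\log n$ only if $\delta_n(\log n)^{D}\to0$. Otherwise the terms with $a$ of order $m$ (for instance all occurrences collapsed onto one edge, $a=b=L-1$) make the bound superpolynomial in $n$, and Markov gives nothing.

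Your truncation cannot deliver that rate under a finite fourth moment alone. Making the cut at $\delta_n\sqrt n$ almost surely eventually invisible in the nested array essentially requires $\sum_\ell 4^\ell\,\Prob(|w_{11}|>\delta_{2^\ell}2^{\ell/2})<\infty$; the necessity comes from the second Borel--Cantelli lemma on the disjoint dyadic shells $2^{\ell-1}<\max(i,j)\le 2^\ell$. For $\delta_n=(\log n)^{-B}$ this condition amounts to $\E|w_{11}|^4(\log(2+|w_{11}|))^{4B}<\infty$, which a finite fourth moment does not give. So $\delta_n$ must decay as slowly as the fourth-moment tail dictates, and then $(Cm)^{2D}\delta_n^2\not\to0$.

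The missing idea is the paper's Lemma~\ref{lem:truncation}. Truncate at the polynomial level $n^{1/4}$, so that $\eps_n=O(n^{-1/4})$ and the $a$-series converges, and give up eventual invisibility. Instead, show separately that the centered tail matrix satisfies $\|n^{-1/2}(W_n-\widehat W_n)\|\to0$ a.s.\ by a non-sharp small-variance Bai--Yin argument (Lemmas~\ref{fact:triangular-bai-yin}--\ref{lem:small-variance-bai-yin}). Then transfer to the power via $\|X^k-\wt X^k\|\le k(\|X\|+\|\wt X\|)^{k-1}\|X-\wt X\|$. The alternative would be a count in which each unit of excess multiplicity costs $O(1)$ rather than a power of $m$, and your sketch does not supply that.

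Separately, your encoding for the key count $F_{k,m}(Cm)^{Dr}$ is only heuristic. Deleting the $O(r)$ marked steps from $(z^kz^{*k})^m$ destroys block compatibility, so the residual tree is not directly counted by $F_{k,m}$. The paper needs the cut-offset bookkeeping and the coefficient comparison for $[t^{m+h}]M(t)^r$ to recover $F_{k,m}$ as the base count.
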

By \eqref{eq:gammak}, the freeness coefficient
\(\gamma_k=\sqrt{(k+1)^{k+1}/k^k}\) is increasing in \(k\) and satisfies
\(\gamma_k\le\sqrt{e(k+1)}\). Thus, the upper bound above has the announced
\(\sqrt{k}\)-scale.
The Gaussian case admits an independent proof by free probability; we show it
as a corollary and give the proof in Section~\ref{sec:gaussian-proof}.

\begin{corollary}[Gaussian powers]
\label{cor:gaussian-powers}
Let \((g_{ij})_{i,j\ge1}\) be a single infinite array of i.i.d.\ standard
complex Gaussian entries, and set
\[
  G_n=(g_{ij})_{1\le i,j\le n},
  \qquad
  Y_n=n^{-1/2}G_n .
\]
Then, for every fixed \(k\ge1\),
\[
  \|Y_n^k\|\to \gamma_k
  \qquad \text{a.s.}
\]
\end{corollary}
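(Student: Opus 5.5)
The corollary follows directly from Theorem~\ref{thm:main}, and the plan below only checks its hypotheses. Then, since the paper promises an independent free-probabilistic proof, it sketches that route as well.

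\emph{Reduction to Theorem~\ref{thm:main}.} Standard complex Gaussian entries $g_{ij}=(\xi+i\eta)/\sqrt2$, with $\xi,\eta$ independent $N(0,1)$, are i.i.d. with $\E g_{11}=0$ and $\E|g_{11}|^2=1$. Their fourth moment is finite: $\E|g_{11}|^4=2$. So Theorem~\ref{thm:main} applies with $\sigma=1$, and it gives $\|Y_n^k\|\to\gamma_k$ almost surely on the nested coupling $G_n=(g_{ij})_{i,j\le n}$.

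\emph{Independent free-probabilistic route.} Here I would prove the two inequalities separately.

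For the lower bound, I would use asymptotic freeness of the Ginibre matrix. By Voiculescu's theorem, $Y_n$ converges almost surely in $*$-distribution to a circular element $c$. In particular,
\[
  n^{-1}\Tr\bigl((Y_n^kY_n^{*k})^m\bigr)\to\tau\bigl((c^kc^{*k})^m\bigr)=F_{k,m}
\]
almost surely for each fixed $m$. Let $\mu_n$ be the empirical singular-value-squared law of $Y_n^k$. It then converges weakly to the law $\nu_k$ with moments $F_{k,m}$, which is the free multiplicative convolution power of Marchenko--Pastur. The support of $\nu_k$ has right edge $\lim_m F_{k,m}^{1/m}=\gamma_k^2$. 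Weak convergence gives, for every $\eps>0$, a positive fraction of singular values above $\gamma_k-\eps$ eventually. Hence $\liminf_n\|Y_n^k\|\ge\gamma_k$ almost surely.

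For the upper bound, convergence in distribution alone is not enough. I need strong convergence in the sense of Haagerup--Thorbjørnsen, or its non-selfadjoint extension. Write $Y_n=(S_{1}+iS_{2})/\sqrt2$ with $S_1,S_2$ independent GUE matrices. This uses that Gaussianity makes the real and imaginary Hermitian parts independent GUE. Haagerup--Thorbjørnsen, extended to general noncommutative polynomials by Male or Collins--Male, gives
\[
  \|P(S_1,S_2)\|\to\|P(s_1,s_2)\|
\]
almost surely for every polynomial $P$. Apply this to the self-adjoint polynomial $P=Y^kY^{*k}$, which is a polynomial in $S_1,S_2$. This yields $\|Y_n^k\|^2\to\|c^kc^{*k}\|=\gamma_k^2$, and the last equality uses the Oravecz/Larsen computation $\|c^k\|=\gamma_k$ quoted above.

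\emph{Main obstacle.} The only nontrivial input is strong convergence, the absence of outlier singular values beyond the support edge. In the free route it is imported. Otherwise it follows from the high-moment bound of Theorem~\ref{thm:main}. One can also get it from Gaussian concentration: $\|Y_n^k\|$ is Lipschitz in the entries on a high-probability ball, which yields almost sure convergence once the expectation is controlled, for instance via $\E\Tr((Y_n^kY_n^{*k})^m)$ with $m\sim\log n$. The edge identification $\max\operatorname{supp}\nu_k=\gamma_k^2$ follows from $F_{k,m}^{1/m}\to\gamma_k^2$ by Stirling's formula.
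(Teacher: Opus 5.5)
Your proposal is correct, and it contains two routes. The first is to check $\E g_{11}=0$, $\E|g_{11}|^2=1$, $\E|g_{11}|^4=2$ and specialize Theorem~\ref{thm:main} to $\sigma=1$. This is valid but genuinely different from the paper's proof. It is a one-line deduction, yet it rests entirely on the high-moment estimate of Theorem~\ref{prop:moment} and the defect counting behind it (Lemma~\ref{lem:opening}, Proposition~\ref{prop:fixed-scheme-expansion}). So it gives no check on the constant that is independent of that combinatorics.

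The paper deliberately avoids Theorem~\ref{thm:main} here, and its argument is essentially your second route:
\begin{itemize}
\item write $Y_n=(H_{1,n}+iH_{2,n})/\sqrt2$ with independent GUE parts;
\item invoke Male's strong convergence to get $\|Y_n^k\|\to\|c^k\|$;
\item compute $\|c^k\|^2=\lim_m\tau((c^kc^{*k})^m)^{1/m}=\lim_m F_{k,m}^{1/m}=\gamma_k^2$.
\end{itemize}
Applying Haagerup--Thorbj{\o}rnsen to the self-adjoint polynomial $Y^kY^{*k}$, rather than Male to $z^k$, is equivalent via $\|Y^k\|^2=\|Y^kY^{*k}\|$. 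Your separate weak-convergence lower bound is harmless but redundant, since strong convergence already gives the limit of the norm in both directions.

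The trade-off is this. The free route is independent of the paper's enumeration and controls all polynomials in $(Y_n,Y_n^*)$ at once. However, it uses Gaussianity essentially (independence of the Hermitian parts, strong convergence for GUE), so it cannot replace Theorem~\ref{thm:main} for general fourth-moment entries.

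Finally, the concentration remark in your last paragraph is not a third independent route. It needs sharp control of $\E\|Y_n^k\|$ via moments with $m\sim\log n$, and that is exactly what Theorem~\ref{prop:moment} supplies.
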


Two special cases might be useful.  When \(k=1\), the constant is
\(\gamma_1=2\), recovering the Bai--Yin edge for \(\|X_n\|\).  When \(m=1\),
one has \(F_{k,1}=1\), corresponding to the normalization
\(n^{-1}\E\Tr(X_n^kX_n^{*k})\to\sigma^{2k}\).

We also present the corresponding statement for products whose objects are
sampled from a pool of independent matrix samples.
The next theorem tells the fixed-pool sampled-with-replacement version.  The
pool size is arbitrary but fixed; it does not enter the constant.

\begin{theorem}[Products sampled with replacement]
\label{thm:sampled-products}
Let \((X_{a,n})_{n\ge1}\), \(a\ge1\), be mutually independent copies of the
nested family \((X_n)_{n\ge1}\), with the same entry law.  Fix an arbitrary
integer \(N\ge1\), and let
\(I_1,\ldots,I_k\) be sampled with replacement from
\(\{1,\ldots,N\}\), independently of the matrices.  Then, for every fixed
\(k\ge1\),
\[
  \limsup_{n\to\infty}
  \|X_{I_1,n}X_{I_2,n}\cdots X_{I_k,n}\|
  \le
  \sigma^k\gamma_k
  \qquad \text{a.s.}
\]
and
\[
  \liminf_{n\to\infty}
  \|X_{I_1,n}X_{I_2,n}\cdots X_{I_k,n}\|
  \ge
  \sigma^k\gamma_k
  \qquad \text{a.s.}
\]
Consequently,
\[
  \|X_{I_1,n}X_{I_2,n}\cdots X_{I_k,n}\|
  \to
  \sigma^k\gamma_k
  \qquad \text{a.s.}
\]
The same conclusions hold if the index word is chosen deterministically with
\[
  (I_1,\ldots,I_k)\in\{1,\ldots,N\}^k .
\]
\end{theorem}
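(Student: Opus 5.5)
We reduce to a deterministic index word and run, for a general word, the same two-sided argument used for Theorem~\ref{thm:main}. Since \((I_1,\ldots,I_k)\) takes finitely many values and is independent of the matrices, it suffices to prove the almost sure statements for each fixed word \(\mathbf a=(a_1,\ldots,a_k)\in\{1,\ldots,N\}^k\). Intersecting the resulting \(N^k\) probability-one events then handles the random choice. So fix \(\mathbf a\) and set \(P_n=X_{a_1,n}\cdots X_{a_k,n}\). The cases \(\mathbf a=(1,\ldots,1)\) and \(\mathbf a=(1,2,\ldots,k)\) are the two extremes, and the plan is to treat all words uniformly.

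\textbf{Upper bound.} We expand \(\E\Tr((P_nP_n^*)^m)\) with \(m\asymp\log n\), as in the moment estimate (Theorem~\ref{prop:moment}) behind Theorem~\ref{thm:main}. A closed word of length \(2km\) now carries, at each step, a colour \(a_j\) recording which independent array the entry comes from. The expectation factorizes over pairs consisting of an unordered edge and a colour, and it vanishes unless every coloured edge is traversed at least twice, with matching conjugation constraints. Forgetting colours maps each surviving coloured word to a word of the \(\ell=1\) expansion. The key step is to check that this map does not increase the defect \(r=(L+1-v)+(L-q)\). Colour refinement can only split edges, never merge them, so \(q\) can only grow. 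The vertex count \(v\) is unchanged. Hence the number of coloured word classes of defect \(r\) is still at most \(F_{k,m}(Cm)^{Dr}\).

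The weights also transfer. Each coloured edge of multiplicity \(t\) contributes at most \(\E|w|^t\), with the same truncation at level \(\delta_n\sqrt n\) as in the power case. This gives \(\E\Tr((P_nP_n^*)^m)\le n\,\sigma^{2km}F_{k,m}(1+o(1))^m\) in the logarithmic range. Markov's inequality and Borel--Cantelli then give \(\limsup\|P_n\|\le\sigma^k\gamma_k(1+\eps)\) almost surely. Truncation and recentering of the entries is handled exactly as for powers, because each factor \(X_{a,n}\) individually satisfies the Bai--Yin bound. The perturbation from truncation is therefore controlled factor by factor.

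\textbf{Lower bound and main obstacle.} The lower bound comes from the limiting singular value distribution. The zero-defect words are tree-like regardless of colouring, and colours in them are forced to pair consistently. Hence \(n^{-1}\E\Tr((P_nP_n^*)^m)\to\sigma^{2km}F_{k,m}\) for each fixed \(m\), and a variance bound of order \(O(n^{-2})\) gives almost sure convergence of the normalized trace. The limiting measure has support edge \(\sigma^{2k}\gamma_k^2\). For large fixed \(m\) this yields \(\liminf\|P_n\|^{2m}\ge\sigma^{2km}F_{k,m}\) almost surely, hence \(\liminf\|P_n\|\ge\sigma^k F_{k,m}^{1/(2m)}\to\sigma^k\gamma_k\).

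The main obstacle is the defect-counting step for mixed words. We must verify that the global enumeration bound \(F_{k,m}(Cm)^{Dr}\) from the power case is genuinely monotone under colouring. The concern is the codings of repeated and innovation edges: a coloured edge may be \emph{new} in the coloured word while its uncoloured shadow is old. Our approach is to encode the colour of every step as a function of its position, which is allowed because the word \(\mathbf a\) is deterministic. With this encoding, a coloured class is determined by its uncoloured class together with an injection, and no extra combinatorial factor depending on \(N\) appears.
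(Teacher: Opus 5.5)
\textbf{Verdict: the overall architecture is right, but the key counting step is argued with the inequality reversed; the conclusion is true and the repair is short.}

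Your plan follows the paper's structure: reduce to the finitely many deterministic label words, truncate each array, control the product perturbation factor by factor, expand the labelled trace for the upper bound, and use consistent colour pairing in leading words plus an $O(n^{-2})$ variance bound for the lower bound.

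\textbf{The gap.} Forgetting colours \emph{merges} labelled edges with the same coordinates. So $q_{\rm uncol}\le q_{\rm lab}$, and hence $d_{\rm uncol}\ge d_{\rm lab}$. The forgetful map can therefore strictly raise the defect, for example when a coordinate pair is used at least twice by each of $X_{1,n}$ and $X_{2,n}$. Your remark that colour refinement only splits edges proves exactly this reverse inequality. Consequently, injecting coloured classes of defect $r$ into uncoloured classes only places them among classes of defect $\ge r$, whose bound $F_{k,m}(Cm)^{Dr'}$ is larger. The claim that coloured classes of defect $r$ number at most $F_{k,m}(Cm)^{Dr}$ does not follow from what you wrote. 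Encoding colours by position settles injectivity, but not this defect mismatch, which is precisely the ``new coloured edge with an old shadow'' case you flag.

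\textbf{Repair.} The vertex deficit $b=L+1-v$ is the same for both words. The uncoloured word is still admissible and traces a connected graph, so $a_{\rm uncol}=L-q_{\rm uncol}\le b$. Hence $d_{\rm lab}\le d_{\rm uncol}\le 2b\le 2d_{\rm lab}$. Summing the power-case bound over $r\le r'\le 2r$ then gives $F_{k,m}(C'm)^{D'r}$, which is all the moment sum needs.

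There is an even simpler route. Unit variance together with $|w|\le\eps_n\sqrt n$ forces $\eps_n\sqrt n\ge1$. So the labelled weight $(\eps_n\sqrt n)^{2a_{\rm lab}}$ is at most $(\eps_n\sqrt n)^{2a_{\rm uncol}}$. The labelled moment is then dominated termwise by the uncoloured sum, already bounded by $nF_{k,m}(1+o(1))$. The paper does neither: it reruns the surplus-pairing and skeleton encoding directly on labelled edge classes.

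\textbf{Two smaller points.}
\begin{enumerate}
\item The independent variables are \emph{ordered} variable-edges, not unordered ones, since $w_{uv}$ and $w_{vu}$ are independent.
\item The consistent colour pairing in leading words needs a proof. One option: the sign walk has height $j$ right after a level-$j$ plus, and must return to height $j-1$ at its non-crossing partner, which is the minus step carrying label $a_j$. This is the paper's level-preservation lemma.
\end{enumerate}

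\textbf{Lower bound.} Your route genuinely simplifies the paper's. For $\sigma=1$ and every fixed $m$, you have $\|\wt P_n\|^{2m}\ge n^{-1}\Tr((\wt P_n\wt P_n^*)^m)\to F_{k,m}$. Combined with $F_{k,m}^{1/(2m)}\to\gamma_k$ and $\|P_n-\wt P_n\|\to0$, this gives the liminf directly. The paper instead transfers the moments to the untruncated matrices and then passes through weak convergence of the empirical measures and the Portmanteau theorem. Be sure to run the fixed-$m$ moment computation on the truncated products: with only four moments, $\E\Tr((P_nP_n^*)^m)$ need not be finite.
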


The independent-product case is the special case in which all sampled labels
are distinct.

\begin{corollary}[Independent products]
\label{cor:independent-products}
Let \(X_{1,n},\ldots,X_{k,n}\) be independent copies of \(X_n\), constructed
from independent nested arrays with mean zero, variance \(\sigma^2\), and
finite fourth moment.  Then, for every fixed \(k\ge1\),
\[
  \limsup_{n\to\infty}\|X_{1,n}X_{2,n}\cdots X_{k,n}\|
  \le
  \sigma^k\gamma_k
  \qquad \text{a.s.}
\]
and
\[
  \liminf_{n\to\infty}\|X_{1,n}X_{2,n}\cdots X_{k,n}\|
  \ge
  \sigma^k\gamma_k
  \qquad \text{a.s.}
\]
Consequently,
\[
  \|X_{1,n}X_{2,n}\cdots X_{k,n}\|
  \to
  \sigma^k\gamma_k
  \qquad \text{a.s.}
\]
\end{corollary}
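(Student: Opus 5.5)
The natural route is to deduce this corollary directly from Theorem~\ref{thm:sampled-products}, using its deterministic-index version. I would take a pool of size \(N=k\) and the fixed index word \((I_1,\ldots,I_k)=(1,2,\ldots,k)\). The only thing to check is that the hypotheses line up. In the corollary, \(X_{1,n},\ldots,X_{k,n}\) are built from \(k\) mutually independent nested arrays, each with i.i.d.\ entries of mean zero, variance \(\sigma^2\), and finite fourth moment. Hence the families \((X_{a,n})_{n\ge1}\), \(a=1,\ldots,k\), are mutually independent copies of the nested family \((X_n)_{n\ge1}\) with the same entry law, exactly as that theorem requires.

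Two points of the coupling need care. First, the theorem assumes an infinite collection of copies indexed by \(a\ge1\), while only \(k\) are given. I would enlarge the probability space by a product extension that adds further independent copies \(X_{a,n}\) for \(a>k\). This changes neither the joint law nor the almost sure behaviour of \(X_{1,n}\cdots X_{k,n}\), because the added copies never enter the product. Second, "almost surely" must refer to the nested coupling in \(n\). This holds, since each copy is nested by construction, so the product matrices live on one probability space for all \(n\).

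Applying the deterministic-word clause of Theorem~\ref{thm:sampled-products} then gives both the \(\limsup\) bound and the \(\liminf\) bound \(\sigma^k\gamma_k\) almost surely. Combining the two yields the convergence \(\|X_{1,n}\cdots X_{k,n}\|\to\sigma^k\gamma_k\) a.s. There is no substantive obstacle here. The only step needing attention is the space-extension remark, which makes precise that "independent copies of \(X_n\)" in the corollary fits the setup of the theorem. All of the real work, namely the defect-sensitive Fuss--Catalan enumeration for the upper bound and the lower bound, is already contained in Theorem~\ref{thm:sampled-products}.
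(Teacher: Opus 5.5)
Your proposal is correct and matches the paper's proof: the corollary follows by applying the deterministic-index clause of Theorem~\ref{thm:sampled-products} with \(N=k\) and the index word \(I_j=j\). Your remark about adjoining independent copies for \(a>k\) is a harmless formality that the paper leaves implicit.
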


Theorem~\ref{thm:main} and Corollary~\ref{cor:independent-products} show that
the \(k\)-th power \(X_n^k\) and the product of the \(k\) independent copies
\(X_{1,n}\cdots X_{k,n}\) have the same limiting norm:
\(\sigma^k\gamma_k\).  Thus, the same freeness coefficient appears both in the
independent-product model and in the non-Hermitian power.
Since \(\gamma_k=\|c^k\|\) for the circular element \(c\), this constant also
matches the one from the free-probability approach.

\subsection{Motivation and Related Work}

Recall that
the Bai--Yin high-moment method \cite{bai-yin}
implies that, for every fixed \(k\in\N\),
\begin{equation}
  \limsup_{n\to\infty}\|X_n^k\|\le (k {+} 1)\sigma^k
  \qquad \text{a.s.}
  \label{eq:bai-yin}
\end{equation}
The factor \(k{+}1\) is sufficient for bounding the spectral radius, because
\((k {+}1)^{1/k}\to1\).  It is not the desirable order for the operator norm of
\(X_n^k\).  Theorem~\ref{thm:main}  shows that, for fixed \(k\), the sharp
universal constant is of order \(\sqrt{k}\).

The appearance of Fuss--Catalan numbers in singular values of powers of
non-Hermitian random matrices is classical at the level of limiting empirical
distributions; see, for instance, Alexeev--G\"otze--Tikhomirov
\cite{alexeev-gotze-tikhomirov}, Alexeev's almost sure version
\cite{alexeev-almost-sure-fuss}, and the word setting studied by
Dubach--Peled \cite{dubach-peled}.  Such empirical-distribution results identify
the limiting moments and the right spectral edge of the squared singular values
of \(X_n^k\), but they do not by themselves rule out isolated singular values
above that edge under a finite fourth moment assumption.

Here we give a direct Bai--Yin-type high-moment upper bound under only a finite
fourth moment assumption, with the sharp fixed-\(k\) universal constant.  To the
best of our knowledge, this finite-fourth-moment operator-norm upper bound is not a direct
consequence of the existing empirical-distribution results.
One should also distinguish the present problem from products of independent
matrices.  For \(W_1\cdots W_k\) the factor labels suppress many collisions,
and the Fuss--Catalan moments arise from the independence between the factors.
Here the matrix is repeated, \(W^k\), so collisions between occurrences at
different positions of the same word survive and produce surplus edges and
non-tree identifications.  The defect counting in
Section~\ref{sec:defect-counting} is precisely the part of the argument that
controls these non-leading words at logarithmic moments.  The main
combinatorial point is that the non-leading words for \(W^k\), absent in the
independent-product model, can be controlled with only a polynomial defect loss
over the Fuss--Catalan leading count.

We emphasize that this is not merely a constant-level refinement of the
Bai--Yin estimate.  The classical Bai--Yin upper-bound argument organizes the
high-moment expansion through a coarse tree-growth count, which is well suited
to producing the \((k+1)\)-type bound in \eqref{eq:bai-yin}.  The present proof
uses a different combinatorial organization of the trace expansion.  It first
isolates the balanced tree-like words, whose exact count is the Fuss--Catalan number \(F_{k,m}\), and
then encodes the remaining repeated-matrix collisions by defect variables, pair
skeletons, finite core schemes, and local records.  The resulting estimate
\(F_{k,m}(Cm)^{Dr}\) for words of defect \(r\) keeps the Fuss--Catalan leading
count as the base and charges only a polynomial loss in the defect.  In this
sense, the sharp edge comes from a different enumeration principle, not from
optimizing constants in the earlier Bai--Yin count.

Free-probabilistic and sharp matrix-concentration methods provide powerful
tools for computing spectral edges of sums and products of large random
matrices, especially in Gaussian or asymptotically free models
\cite{haagerup-thorbjornsen,schultz,male,bandeira-boedihardjo-vanhandel,
bandeira-cipolloni-schroeder-vanhandel}.  The present proof is complementary:
it works directly with i.i.d.\ entries under only a finite fourth moment
assumption, and the non-Gaussian part of the argument is carried by the
truncation reduction and the defect enumeration of repeated-matrix trace words.

\subsection{Proof Roadmap and Paper Structure}

The roadmap of the proofs of our main results is as follows.  We first prove
Theorem~\ref{thm:main}.  We then prove Theorem~\ref{thm:sampled-products} by
adapting the same word expansion to products with sampled labels from a fixed
finite pool.  The
independent-product result, Corollary~\ref{cor:independent-products}, follows
as a special case of Theorem~\ref{thm:sampled-products}; the Gaussian result,
Corollary~\ref{cor:gaussian-powers}, is proved separately using strong
convergence and free probability.

The remainder of this paper is organized as follows. 
In Section~\ref{sec:proof-main-theorem}
we present the proof framework of Theorem~\ref{thm:main}.
In Section~\ref{sec:proof--main-proposition}
we prove the high-moment estimate, Theorem~\ref{prop:moment}, which is the key
building block for Theorem~\ref{thm:main}.  We give the proof of
the lower bound in Theorem~\ref{thm:main} in
Section~\ref{sec:lower-bound-proof}, followed by the proof of
Corollary~\ref{cor:gaussian-powers}.  The proofs of
Theorem~\ref{thm:sampled-products} and
Corollary~\ref{cor:independent-products} are given in
Section~\ref{sec:cor-proof}.
Section~\ref{sec:conclusion} concludes our work, and the remaining auxiliary
proofs are placed in the appendices.

\section{The Proof Framework of Theorem~\ref{thm:main}}
\label{sec:proof-main-theorem}

The proof proceeds by reducing to bounded triangular arrays and then removing
the truncation.

\subsection{Reduction to a High Moment Estimate}

If \(\sigma=0\), then \(w_{11}=0\) almost surely and the theorem is trivial.
Otherwise we first normalize \(\sigma=1\).  The general case follows by
replacing \(W_n\) with \(\sigma^{-1}W_n\).  The proof is based on the following
truncated high-moment estimate.

\begin{theorem}[High-moment estimate for truncated arrays]
\label{prop:moment}
Fix \(k\ge1\), \(A>0\), and let \(B>0\) be sufficiently large depending only on
\(k,A\).  Let \(X_n=n^{-1/2}W_n\), where for each \(n\) the matrix \(W_n\) has
independent entries \(w_{ij}^{(n)}\) satisfying
\[
  \E w_{ij}^{(n)}=0,\qquad
  \E |w_{ij}^{(n)}|^2=1,\qquad
  |w_{ij}^{(n)}|\le \eps_n\sqrt n,
\]
where the sequence \(\eps_n \ge 0\) satisfies \(\eps_n(\log n)^B\to0 \). The entries are not required to be identically distributed.
Then for every \(\eta>0\), uniformly for all integers \(1\le m\le A\log n\),
we have, for all sufficiently large \(n\),
\[
  \E\Tr\bigl((X_n^kX_n^{*k})^m\bigr)
  \le
  n(\gamma_k^2+\eta)^m .
\]
Here ``sufficiently large'' may depend on \(k,A,\eta\) and on the particular
sequence \((\eps_n)\), equivalently on the triangular array (see Lemma~\ref{lem:truncation}), but not on \(m\).
\end{theorem}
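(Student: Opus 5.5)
We expand the trace as a sum over closed index words and control it by a defect-graded count. Writing \(L=km\), we have
\[
  \E\Tr\bigl((X_n^kX_n^{*k})^m\bigr)=n^{-L}\sum_{\mathbf i}\E\prod_{t=1}^{2L} w^{\pm}_{e_t},
\]
where \(\mathbf i=(i_0,i_1,\dots,i_{2L-1},i_0)\) runs over closed walks of length \(2L\) on \([n]\). The walk traverses \(k\) forward edges \(w_{i_ti_{t+1}}\), then \(k\) edges coming from \(X^{*k}\) (entries \(\bar w_{i_{t+1}i_t}\)), and repeats this block \(m\) times. By independence and centering, only walks in which every unordered matrix entry (ordered pair \((i,j)\), since the matrix is non-Hermitian) appears at least twice contribute.

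We group walks into equivalence classes (words) up to relabeling of vertices. For a word with \(v\) distinct vertices and \(q\) distinct edges, the vertex labelings contribute at most \(n^v\). The truncation \(|w|\le\eps_n\sqrt n\) together with \(\E|w|^2=1\) bounds the moment factor by \(\prod_e(\eps_n\sqrt n)^{\mu_e-2}\), where \(\mu_e\) is the multiplicity of edge \(e\). Since \(\sum_e\mu_e=2L\), this factor is at most \((\eps_n\sqrt n)^{2L-2q}\). A class therefore contributes at most
\[
  n^{v-L}(\eps_n\sqrt n)^{2(L-q)}=n\cdot n^{-(L+1-v)}\eps_n^{2(L-q)}.
\]
Connectivity gives \(v\le q+1\), and the double-cover condition gives \(q\le L\). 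So both defect components \(L+1-v\) and \(L-q\) are nonnegative, and \(r=(L+1-v)+(L-q)\) is the defect. In every case the class contributes at most \(n\,\eps_n^{\,r}\) for large \(n\); in the pure vertex-defect case \(v<q+1\) the factor is even \(n^{-1}\) per unit.

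The key steps, in order, are as follows.

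First, we identify the zero-defect words (\(v=L+1\), \(q=L\)): these are double trees in which each edge is traversed exactly twice, once as a \(w\) and once as a \(\bar w\), compatibly with the alternating block structure. We show they biject with rooted ordered \((k+1)\)-ary trees with \(m\) internal vertices, so there are exactly \(F_{k,m}\) of them, each with moment exactly \(1\). Words in which an edge is traversed twice in the same orientation type have \(\E w^2\ne1\) in general; these must be shown either impossible at zero defect or forced into positive defect.

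Second, we prove the defect enumeration: the number of word classes with defect \(r\) is at most \(F_{k,m}(Cm)^{Dr}\). The plan is to take a defect-\(r\) word and record a spanning "skeleton" obtained by deleting the \(O(r)\) surplus or repeated edges. This skeleton is a Fuss–Catalan-type tree word with at most \(F_{k,m}\cdot C^{r}\) choices, up to a bounded Raney-number perturbation. We then record the positions where the walk leaves the skeleton, namely new edges, reuse of old vertices, and higher multiplicities. Each such event costs at most a choice among \(2L\) positions and \(\le 2L\) target vertices, i.e.\ \((Cm)^{D}\) per unit defect. The number of such events must be shown to be \(O(r)\), which is where the repeated-matrix collisions matter: returning to a previously seen edge via a non-tree step must be charged to a defect unit.

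Third, we sum over the defect:
\[
  \E\Tr(\cdot)\le nF_{k,m}\sum_{r\ge0}(Cm)^{Dr}\max\bigl(n^{-1},\eps_n\bigr)^{r'},
\]
where the exponent is arranged so that \(r'\gtrsim r\); here one uses \(v\le q+1\) to convert the vertex deficit into powers of \(n^{-1}\) and the edge deficit into powers of \(\eps_n^2\). With \(m\le A\log n\) we have \((Cm)^D\eps_n\le C'(\log n)^D\eps_n\to0\) once \(B\ge D\), so the geometric series is \(1+o(1)\) uniformly in \(m\). Finally, \(F_{k,m}\le C_k(\gamma_k^2)^m\le(\gamma_k^2+\eta)^m\) for large \(m\), and for bounded \(m\) the constant is absorbed since \((1+o(1))\le(1+\eta/\gamma_k^2)^m\) holds for all \(m\ge1\) once \(n\) is large.

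The main obstacle is the second step: showing that the loss per defect unit is only polynomial in \(m\), with no exponential factor \(c^m\) that would destroy the sharp constant \(\gamma_k\). In the powered setting, consecutive forward steps \(w_{i_0i_1}w_{i_1i_2}\cdots\) may revisit edges inside one \(X^k\) block, so the naive Bai–Yin tree-growth code loses a factor \(\sim(k+1)^{m}/\)something. My first attempt would be to fix a canonical first-visit encoding. In it, each step is classified as "innovation", "return along the tree" (determined uniquely by the stack, as in the Fuss–Catalan bijection), or "irregular". I would then prove a lemma that the number of irregular steps is at most \(C\,r\), and that every step after an irregular one resynchronizes to forced tree moves within \(O(1)\) further recorded data.
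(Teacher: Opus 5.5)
Your outline follows the paper's architecture: trace expansion, the per-class bound, the \(F_{k,m}\) zero-defect words, a defect count \(F_{k,m}(Cm)^{Dr}\), and geometric summation for \(m\le A\log n\). The summation is fine apart from one algebra slip. In fact \(n^{v-L}(\varepsilon_n\sqrt n)^{2(L-q)}=n\cdot n^{a-b}\varepsilon_n^{2a}\) with \(a=L-q\), \(b=L+1-v\), not \(n\cdot n^{-b}\varepsilon_n^{2a}\). Your bound \(n\varepsilon_n^{r}\) per class still holds, but only because \(b\ge a\) and \(\varepsilon_n\ge n^{-1/2}\), which follows from \(\mathbb{E}|w|^2=1\).

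The genuine gap is the defect enumeration, which is the whole content of the theorem; the mechanism you propose for it fails. Take \(m\ge4\) even, so that \(L=km\) is a multiple of \(2k\) and occurrences \(s\) and \(s+L\) carry the same sign. Let the walk go twice around a cycle on \(L\) distinct vertices, i.e.\ \(i_{s+L}=i_s\). Then \(e_s=e_{s+L}\) for every \(s\), so \(q=v=L\) and the defect is \(r=1\). The word is admissible, and for real entries its moment factor is \((\mathbb{E}w^2)^L=1\).

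Yet each of the \(L\) steps of the second lap retraverses an old edge in the same direction and with the same sign. None of them is a stack return, and the walk never resynchronizes with tree moves. Your classification therefore produces \(\asymp m\) irregular steps at defect \(1\). The same thing happens for any long same-sign (twisted) degree-two chain inside a larger word. Consequences:
\begin{itemize}
\item The lemma ``number of irregular steps \(\le Cr\)'' is false.
\item Non-tree returns to old edges cannot be charged one at a time to defect units.
\item Even \(O(1)\) recorded choices per irregular step cost \(c^{\ell}\) on a chain of length \(\ell\), which destroys the constant \(\gamma_k\).
\item Deleting surplus edges from a walk does not leave a closed contour compatible with \((z^kz^{*k})^m\), so the claimed \(F_{k,m}C^r\) count for your skeleton is unsupported.
\end{itemize}

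The missing idea is to make an entire degree-two chain cost \(O(1)\) recorded data. The paper does this in three steps.
\begin{itemize}
\item It converts the word into a pair skeleton (Lemma~\ref{lem:surplus}): principal pairs plus canonically paired surplus occurrences, with excess \(\chi\le b+2a\) and \((Cm)^{O(a+b)}\) local records.
\item It prunes tree branches. These are automatically plus/minus balanced, hence leading.
\item It suppresses maximal degree-two paths. The resulting scheme has \(O(\chi)\) half-edges, because after the single-loop exception all its vertices have degree at least three.
\end{itemize}
A suppressed chain of any length is then determined by its two boundary strings \(I,J\) and one global convention, \(i_r\leftrightarrow j_r\) or \(i_r\leftrightarrow j_{\ell+1-r}\). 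Whether each restored pair is twisted is read off the fixed sign pattern of \((z^kz^{*k})^m\) rather than recorded (Lemma~\ref{lem:no-twist-record}). The sector branches and chain envelopes form a forest bounded through coefficients of \(M(t)^{O(\chi)}\), which Lemma~\ref{lem:coefficient-comparison} compares with \(F_{k,m}\). Your step-by-step coding would need an equivalent global treatment of chains before it could give \(F_{k,m}(Cm)^{Dr}\).
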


The upper bound in Theorem~\ref{thm:main} follows from this estimate and the
truncation lemma below. The proof framework of Theorem~\ref{prop:moment} 
is given in Section~\ref{sec:proof--main-proposition}. We  believe 
that this theorem is of independent interest.  

\subsection{Truncation}
\label{sec:truncation}

The required truncation reduction is stated in the nested form needed for
almost sure convergence.  The proof, together with the small-variance
Bai--Yin estimates used in it, is given in Appendix~\ref{app:reduction}.

\begin{lemma}[Truncation]
\label{lem:truncation}
Let \(W_n\) be the \(n\times n\) upper-left corner of a single infinite i.i.d.\
array with mean zero, variance one, and finite fourth moment. Then there exist a
deterministic sequence \(\eps_n\downarrow0\), with
\(\eps_n(\log n)^B\to0\) for every fixed \(B<\infty\), and a triangular
array \(\wt W_n=(\wt w_{ij}^{(n)})\) such that, for each fixed \(n\), the
entries \(\wt w_{ij}^{(n)}\), \(1\le i,j\le n\), are i.i.d. and satisfy:
\begin{enumerate}[label=(\roman*)]
  \item[\emph{(i)}] \(\E \wt w_{11}=0\) and \(\E|\wt w_{11}|^2=1\);
  \item[\emph{(ii)}] \(|\wt w_{ij}|\le \eps_n\sqrt n\) almost surely;
  \item[\emph{(iii)}]
  $
    \left\|n^{-1/2}(W_n-\wt W_n)\right\|\to0
    \qquad \text{a.s.}
  $
\end{enumerate}
The array \(\wt W_n\) is constructed entrywise from the same infinite array as
\(W_n\); only the truncation level, centering, and variance normalization
depend on \(n\).
\end{lemma}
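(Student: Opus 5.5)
The plan is the standard Bai--Yin truncation scheme, arranged so that the small error term can be controlled almost surely. Write \(\mu=\E w_{11}\) and \(\E|w_{11}|^4<\infty\). The first step is to choose \(\eps_n\). The finite fourth moment gives
\[
  \sum_{j\ge 0}2^{j}\,\Prob\bigl(|w_{11}|>\eps 2^{j/2}\bigr)<\infty
  \qquad\text{for every } \eps>0 .
\]
A standard diagonal argument then produces a deterministic \(\eps_n\downarrow0\) that decreases slowly enough to keep
\[
  \sum_j 2^{j}\,\Prob\bigl(|w_{11}|>\eps_{2^j}2^{j/2}\bigr)<\infty .
\]
Since the decay can be made arbitrarily slow, we may take for instance \(\eps_n\ge(\log\log n)^{-1}\) while still \(\eps_n\to0\). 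Then \(\eps_n(\log n)^B\) needs separate care: the requirement is \(\eps_n(\log n)^B\to0\), which forces \(\eps_n\) to decay \emph{faster} than every power of \(\log n\). So we do not choose \(\eps_n\) too slow. Instead we take \(\eps_n=\delta_n\,\exp(-\sqrt{\log n})\)-type truncation levels only if the tail sum still converges.

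That does not work directly, since a fourth moment only controls truncation at level \(\eps\sqrt n\) with \(\eps\) fixed. The remedy is to split the array into a bounded-at-\(\eps_n\sqrt n\) part \(\wt W_n\) and a remainder \(R_n=W_n-\wt W_n\). We then show \(\|n^{-1/2}R_n\|\to0\) a.s. not because \(R_n\) vanishes eventually, but through the small-variance Bai--Yin estimates that the statement mentions. Concretely, set
\[
  \wt w_{ij}=\frac{w_{ij}\one_{|w_{ij}|\le\eps_n\sqrt n}-\E(\cdot)}{s_n},
\]
where \(s_n^2\) is the resulting variance. This gives (i) and (ii) after replacing \(\eps_n\) by \(2\eps_n\). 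It also gives \(s_n\to1\) and \(|\E w\one_{|w|\le\eps_n\sqrt n}|\le \E|w|^4/(\eps_n\sqrt n)^3\). The rank-one centering matrix then has norm \(n^{1/2}\cdot O((\eps_n\sqrt n)^{-3})\to0\) provided \(\eps_n\gg n^{-1/3}\), which is compatible with polylog decay. The rescaling by \(s_n\) costs \(|1-s_n^{-1}|\cdot\|n^{-1/2}W_n\|\to0\), using Bai--Yin \(\limsup\|n^{-1/2}W_n\|\le2\). The nested structure is what makes these a.s. statements legitimate.

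The remaining term is \(n^{-1/2}W_n^{>}\), with entries \(w_{ij}\one_{|w_{ij}|>\eps_n\sqrt n}\). I would bound it by a Bai--Yin-type small-variance estimate. Its entries have variance \(\tau_n^2=\E|w|^2\one_{|w|>\eps_n\sqrt n}\le\E|w|^4/(\eps_n^2 n)\), which tends to \(0\) whenever \(\eps_n\gg n^{-1/2}\). We truncate this term again at a fixed small level \(\delta\sqrt n\). The piece with \(|w|>\delta\sqrt n\) vanishes eventually a.s. by the Bai--Yin Borel--Cantelli argument along dyadic blocks, using the fourth moment. The middle band \(\eps_n\sqrt n<|w|\le\delta\sqrt n\), after centering, has entries bounded by \(\delta\sqrt n\) and variance \(\tau_n^2\to0\). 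For this band the high-moment Bai--Yin bound gives
\[
  \limsup\|\cdot\|\le C(\tau+\delta)
\]
almost surely, and sending \(\delta\to0\) finishes. I expect the main obstacle to be this small-variance high-moment estimate. With entries bounded by \(\delta\sqrt n\) and variance \(\tau_n^2\), the trace-moment count for \(\E\Tr(HH^*)^m\) must give roughly \(n(C\max(\tau_n,\delta))^{2m}\) uniformly for \(m\asymp\log n\). It must also remain valid at the nested level so that a Borel--Cantelli sum over \(n\) converges. I would adapt the classical Bai--Yin path count with the entry bound replacing the fourth-moment input, which is the route the appendix indicates.
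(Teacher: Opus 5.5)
Your overall scheme matches the paper's. You truncate at $\eps_n\sqrt n$, center, and renormalize. You then control the discarded tail by splitting it at a fixed level $\delta\sqrt n$: Borel--Cantelli removes the top piece, a high-moment bound handles the bounded middle piece, and $\delta\downarrow0$ at the end. This is exactly what Lemmas~\ref{fact:triangular-bai-yin}, \ref{lem:small-variance-bai-yin} and the proof of Lemma~\ref{lem:truncation} do, with $a_n=n^{1/4}$, i.e.\ $\eps_n\asymp n^{-1/4}$.

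You should commit to such a choice. It satisfies $\eps_n\gg n^{-1/3}$ and $\eps_n(\log n)^B\to0$ for every $B$; ``polylog decay'' does not meet the second condition. You can then drop the opening diagonal argument, which targets the wrong regime.

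The remaining differences are cosmetic:
\begin{itemize}
\item In the renormalization step you cite the classical Bai--Yin theorem for $\|n^{-1/2}W_n\|$. The paper instead gets $\|n^{-1/2}\widehat W_n\|=O(1)$ from Lemma~\ref{lem:bounded-fourth-moment-by} with $\delta=1$.
\item The paper uses $\E w_{11}=0$ to identify $W_n-\widehat W_n$ directly with the centered tail, so your separate rank-one centering terms are handled inside the appendix lemma.
\end{itemize}

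The step that fails as you formulate it is the middle-band estimate. For fixed $\delta$, an entry bound $\delta\sqrt n$ plus variance $\tau_n^2\to0$ does not give $\E\Tr((HH^*)^m)\le n(C\max(\tau_n,\delta))^{2m}$ for $m\asymp\log n$. For a counterexample, take $z=\pm\delta\sqrt n$ with probability $\tau_n^2/(\delta^2n)$ and $\tau_n^2=1/\log\log n$.
\begin{itemize}
\item The number $R_i$ of nonzero entries in row $i$ is binomial with mean $\tau_n^2/\delta^2$. For every fixed $j$, $n\Prob(R_1\ge j)\to\infty$, so $\max_iR_i\to\infty$ in probability.
\item Since $\|n^{-1/2}Z\|\ge\delta\sqrt{\max_iR_i}$, the norm is unbounded.
\item Your estimate with $m=\lfloor A\log n\rfloor$ and Markov would force $\|n^{-1/2}Z\|\le 2C\delta$ with high probability, a contradiction.
\end{itemize}
So the Bai--Yin count cannot run with the entry bound ``replacing'' the fourth-moment input; edges of multiplicity $\ge3$ need more.

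In your setting the missing input is available in either of two forms:
\begin{itemize}
\item Use $\E|z|^4\le C\,\E[|w|^4\one_{\{|w|>\eps_n\sqrt n\}}]\to0$ for the centered middle band, as the paper does in Lemma~\ref{lem:bounded-fourth-moment-by}. There the variance handles multiplicity two, interpolation handles three, and the fourth moment followed by the entry bound handles four and more.
\item Use that your $\tau_n^2\le\E|w|^4/(\eps_n^2n)$ is polynomially small, say $\le n^{-c}$. Then even the crude bound $n\sum_{q\ge1}(q+1)^{2m}(2\delta)^{2m-2q}\tau_n^{2q}$ over classes with $q$ distinct edges is at most $n(C(A,c)\delta)^{2m}$ for $m\le A\log n$.
\end{itemize}
With either input made explicit, the rest of your argument goes through.
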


\subsection{The Upper Bound in Theorem~\ref{thm:main}}
\label{sec:main-proof}

\begin{proof}[Proof of the upper bound in Theorem~\ref{thm:main}]
If \(\sigma=0\), then \(w_{11}=0\) almost surely and the theorem is trivial.
Assume first that \(\sigma=1\).  Let \(\wt W_n\) be the truncated array supplied
by Lemma~\ref{lem:truncation}, and put
\[
  \wt X_n=n^{-1/2}\wt W_n .
\]

We prove the bound first for the truncated matrices.  Fix
\(\delta,\eta>0\), and choose \(m=\lfloor A\log n\rfloor\).  Markov's
inequality and Theorem~\ref{prop:moment} give
\begin{align*}
  \Prob\left(\|\wt X_n^k\|>(1+\delta)\sqrt{\gamma_k^2+\eta}\right)
  &\le
  \frac{\E\Tr\bigl((\wt X_n^k\wt X_n^{*k})^m\bigr)}
       {(1+\delta)^{2m}(\gamma_k^2+\eta)^m}  \\
  &\le n(1+\delta)^{-2m},
\end{align*}
provided \(A\) is chosen and then the corresponding \(B\) in
Theorem~\ref{prop:moment} is used.  Because Lemma~\ref{lem:truncation} gives
\(\eps_n(\log n)^B\to0\) for every fixed \(B\), we apply this theorem to
\(\wt X_n\).  If \(A>3/(2\log(1+\delta))\), the last bound is summable.  By the
Borel--Cantelli theorem and then by applying the conclusion to countable sequences
\(\delta,\eta\downarrow0\),
\[
  \limsup_{n\to\infty}\|\wt X_n^k\|
  \le
  \gamma_k
  \qquad\text{a.s.}
\]
The same argument with \(k=1\) gives \(\|\wt X_n\|=O(1)\) almost surely.
Lemma~\ref{lem:truncation} also gives
\(\|X_n-\wt X_n\|\to0\), hence \(\|X_n\|=O(1)\) almost surely.  Therefore,
\[
  X_n^k-\wt X_n^k
  =
  \sum_{j=0}^{k-1}X_n^j(X_n-\wt X_n)\wt X_n^{k-1-j}
\]
implies
\[
  \|X_n^k-\wt X_n^k\|
  \le
  k(\|X_n\|+\|\wt X_n\|)^{k-1}\|X_n-\wt X_n\|
  =o(1)
  \qquad\text{a.s.}
\]
This proves the result when \(\sigma=1\).  If \(\sigma>0\), apply the normalized
case to \(\sigma^{-1}W_n\) and multiply the resulting bound by \(\sigma^k\).
\end{proof}

\section{The Proof Framework of Theorem~\ref{prop:moment}}
\label{sec:proof--main-proposition}

We prove the truncated high-moment estimate.  
It includes four main steps: 
expanding the trace
into closed words, identifying the Fuss--Catalan leading words, bounding all
non-leading words by a defect-sensitive counting estimate, and finally summing the
resulting bounds in the logarithmic moment range.
In the following proofs, \(n\) is fixed.  We write \(X=X_n\) and
\(w_{ij}=w_{ij}^{(n)}\), suppressing the triangular-array superscript.

\subsection{The Word Expansion}

Put \(L=km\).  Expanding the trace gives
\begin{equation}
\label{eq:trace-expansion}
  \E\Tr\bigl((X^kX^{*k})^m\bigr)
  =
  n^{-L}
  \sum_{\mathbf i}
  \E \prod_{s=1}^{2L} \zeta_s(\mathbf i),
\end{equation}
where \(\mathbf i=(i_0,i_1,\ldots,i_{2L})\), \(i_{2L}=i_0\), and $s$ represents the sign
pattern which is \(m\) repetitions of the block consisting of \(k\) plus signs
followed by \(k\) minus signs; and
\[
  \zeta_s(\mathbf i)=
  \begin{cases}
    w_{i_{s-1},i_s},& s\text{ is a plus occurrence},\\[2mm]
    \ol{w_{i_s,i_{s-1}}},& s\text{ is a minus occurrence}.
  \end{cases}
\]
We regard \(w_{uv}\) and \(\ol{w_{uv}}\) as belonging to the same ordered
variable-edge \((u,v)\).  In both the cases, the ordered variable-edge attached to the occurrence is the
ordered pair indexing the non-conjugated variable; hence it is
\((i_{s-1},i_s)\) at a plus occurrence and \((i_s,i_{s-1})\) at a minus
occurrence.  We denote this ordered pair by
\[
  e_s(\mathbf i)=
  \begin{cases}
    (i_{s-1},i_s),& s\text{ is a plus occurrence},\\[1mm]
    (i_s,i_{s-1}),& s\text{ is a minus occurrence}.
  \end{cases}
\]
For a closed word \(\mathbf i\), let
\[
  v(\mathbf i)=\#\{i_0, \ldots, i_{2L-1}\}
\]
be the number of visited vertices and let \(q(\mathbf i)\) be the number of
distinct ordered variable-edges among the \(e_s(\mathbf i)\)'s.

Two words are equivalent if one can be obtained from the other by relabeling
the visited vertices.  If a word has \(v\) vertices, its equivalence class has
at most \(n(n-1)\cdots(n-v+1)\le n^v\) representatives.

\begin{definition}
A word is \emph{admissible} if every ordered variable-edge appearing in the
word appears at least twice, namely if
\[
  r_e(\mathbf i):=\#\{s\colon e_s(\mathbf i)=e\}\ge2
\]
for every ordered variable-edge \(e\) that appears.  It is
\emph{balanced tree-like} if
\[
  q=L,\qquad v=L+1,
\]
and each ordered variable-edge appears exactly once at a plus sign and exactly
once at a minus sign.
\end{definition}

Only admissible words contribute to \eqref{eq:trace-expansion}, because
\(\E w_{11}=0\).  Indeed, if an ordered variable-edge appears only once, then
conditioning on all other matrix entries leaves a single centered factor
\(w_{uv}\) or \(\ol{w_{uv}}\), and the expectation vanishes by independence.
No assumption such as \(\E w_{11}^2=0\) is made in the complex case.  Same-sign
repetitions are allowed among admissible words; they are non-leading
collisions and are controlled by absolute moment bounds.  The leading
tree-like words are automatically plus/minus balanced.
Moreover, for an admissible word,
\[
  v\le q+1\le L+1,
\]
because the graph traced by the word is connected.

\begin{example}
For \(k=2\) and \(m=1\), the sign patterns are \(+,+,-,-\), and a closed word is
\(\mathbf i=(i_0,i_1,i_2,i_3,i_4)\) with \(i_4=i_0\).  The four factors are
\[
  w_{i_0,i_1}\,w_{i_1,i_2}\,
  \ol{w_{i_3,i_2}}\,\ol{w_{i_0,i_3}} .
\]
Thus the ordered variable-edges are
\[
  (i_0,i_1),\quad (i_1,i_2),\quad (i_3,i_2),\quad (i_0,i_3).
\]
The corners are the four positions \(0,1,2,3\), and a word-equivalence class
is obtained by recording which of these corners carry the same vertex label.
For instance, if the first and third ordered variable-edges are identified,
then the corner equalities are \(i_0=i_3\) and \(i_1=i_2\).  This is the
corner-partition viewpoint used below for pair skeletons.
\end{example}

\subsection{Fuss--Catalan Enumeration of the Leading Words}
\label{sec:leading-words}

Recall the Fuss--Catalan number \(F_{k,m}\) from \eqref{eq:Fkm}.  The leading
words are counted by this number.

\begin{lemma}[Leading words]
\label{lem:leading}
The number of equivalence classes of balanced tree-like words that contribute to
\(\Tr\bigl((X^kX^{*k})^m\bigr)\) is \(F_{k,m}\).
\end{lemma}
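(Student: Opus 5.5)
I will set up an explicit bijection between equivalence classes of balanced tree-like words and rooted ordered \((k+1)\)-ary trees with \(m\) internal vertices, which are counted by \(F_{k,m}\). Equivalently, I will show that the classes are in bijection with non-crossing pairings of the \(2L\) occurrences that pair each plus occurrence with a minus occurrence, subject to the block constraint coming from the sign pattern \((+^k-^k)^m\). I will then count those pairings by a Raney-type first-return decomposition.

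\textbf{Step 1: words to pairings.} For a balanced tree-like word, \(v=L+1\) and \(q=L\), so the underlying undirected graph on the visited vertices is connected with \(L+1\) vertices and \(L\) edges, hence a tree. The closed walk \(i_0,\dots,i_{2L}\) traverses each of its \(L\) edges exactly twice. The first traversal of an edge is the step that discovers a new vertex. The second traversal is the return step.

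This gives a pairing of the \(2L\) steps, and it is non-crossing because it is a closed walk on a tree. Conversely, a non-crossing pairing of the steps determines the vertex partition of the corners, and hence a unique equivalence class, by the usual Dyck-path/tree correspondence. So the class is determined by its pairing.

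\textbf{Step 2: the compatibility constraint.} The condition "each ordered variable-edge appears once at a plus and once at a minus" must be translated into a constraint on the pairing. Take a plus step \(s\), which records \((i_{s-1},i_s)\), paired with a step \(t\).

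If \(t\) is a minus step, it records \((i_t,i_{t-1})\). Since \(t\) retraces \(s\) backwards, we have \(i_{t-1}=i_s\) and \(i_t=i_{s-1}\), so the ordered edges coincide. This is consistent.

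If \(t\) were also a plus step, the two ordered edges would be reversed. That would give two distinct ordered variable-edges, each appearing once, contradicting admissibility.

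Hence the classes are exactly the non-crossing pairings of the cyclic sign sequence \((+^k-^k)^m\) in which every pair joins a \(+\) to a \(-\). Conversely, every such pairing gives \(v=L+1\) and \(q=L\), with distinct vertices allowed since \(n\) is large. This is the same set of pairings that appears in the free Wick computation of \(\tau((c^kc^{*k})^m)\) in Section~\ref{sec:preliminaries}, which is a useful consistency check.

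\textbf{Step 3: the count.} This is the main step. I will count non-crossing \(\pm\)-pairings of \((+^k-^k)^m\) by a first-return decomposition. The pairings correspond to Dyck-type lattice paths in which each \(+\) is an up step and each \(-\) is a down step, and the path is constrained to the periodic pattern. I expect to verify, via the Lagrange inversion formula \(F(z)=1+zF(z)^{k+1}\), that the generating function satisfies this equation.

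Concretely, I will decompose at the partner of the first minus sign of the first block, and I expect the pieces to be \(k+1\) independent smaller structures of the same type. An alternative I will try first, since it is cleaner, is a direct bijection: read the word in blocks of length \(2k\), and let each block \(+^k-^k\) correspond to an internal node of a \((k+1)\)-ary tree whose \(k+1\) children slots are the gaps between consecutive pairs. That bijection is the one Alexeev--G\"otze--Tikhomirov use, and I will check it reproduces \(F_{k,1}=1\) and \(F_{1,m}=\mathrm{Cat}_m\). Once the recursion is established, Raney's formula \cite{raney} gives \(\frac{1}{km+1}\binom{(k+1)m}{m}\).

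The delicate point is making the decomposition respect the cyclic block structure, since the first return need not align with block boundaries.
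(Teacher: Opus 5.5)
Your Steps 1 and 2 are correct and match the first part of the paper's proof: balanced tree-like classes are exactly the non-crossing pairings of $(+^k-^k)^m$ in which every pair joins a plus to a minus. The gap is Step 3. It is the actual content of the lemma, and you leave it as an expectation. Moreover, the concrete devices you propose there would not work as stated.

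\begin{itemize}
\item \emph{The lattice-path encoding.} The pairings are not encoded by a lattice path with $+$ as up-step and $-$ as down-step: that path is the single fixed zigzag. Minus occurrences can open pairs. For $k=m=2$, the pairing $\{1,8\},\{2,7\},\{3,6\},\{4,5\}$ of $++--++--$ is compatible, and the minus at position $3$ opens $\{3,6\}$.
\item \emph{The pivot at the partner of the first minus sign of the first block.} This isolates one chord, so it gives only a two-way inside/outside split. For $k\ge2$, one of the two pieces has runs of length $k-1$ (e.g.\ $+^{k-1}-^{k-1}(+^k-^k)^{m-1}$ when that partner is position $k$). So no $(k+1)$-fold recursion of the same type comes out of it.
\item \emph{The block-as-node bijection.} Taking each block $+^k-^k$ as a node also needs correction. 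A block's minus signs are in general matched to pluses of other blocks: in the example, positions $3,4$ are matched to $6,5$.
\end{itemize}

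The missing idea, which is the paper's pivot, is to use all $k$ plus occurrences $1,\dots,k$ of the first block at once.

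\begin{itemize}
\item \emph{LIFO order.} Non-crossing forces their partners into last-in-first-out order $p_1>p_2>\cdots>p_k$: if $i<i'\le k$, then $i'\in(i,p_i)$, so $p_{i'}\in(i,p_i)$.
\item \emph{Balance forces the offsets.} Each of the $k+1$ gaps $(k,p_k)$, $(p_{i+1},p_i)$ for $1\le i<k$, and $(p_1,2L]$ must be matched internally. Hence each contains equally many plus and minus signs. A direct sign count in each gap shows this holds exactly when the offsets of $p_k,p_{k-1},\dots,p_1$ inside their respective minus-runs are $1,2,\dots,k$.
\item \emph{This resolves your ``delicate point''.} Every gap is then a cyclic rotation of $(+^k-^k)^{j_i}$, with $j_0+\cdots+j_k=m-1$. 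Non-crossing pairings of a linear word are the same as non-crossing chord diagrams on the circle, and the opposite-sign condition is rotation invariant. So each gap is counted by $M_{j_i}$.
\item \emph{Closing the recursion.} Conversely, any $(j_0,\dots,j_k)$ together with any compatible pairings of the gaps reassembles to a compatible pairing. This gives $M(t)=1+tM(t)^{k+1}$, and Lagrange inversion finishes as in the paper.
\end{itemize}

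The paper's assertion that each piece is ``again of the same block-compatible type'' rests on exactly this offset computation.
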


We will also use the elementary uniform bound
\begin{equation}
\label{eq:F-growth}
  F_{k,m}
  =
  \frac1{km+1}\binom{(k+1)m}{m}
  \le
  \left(\frac{(k+1)^{k+1}}{k^k}\right)^m
  =
  \gamma_k^{2m} .
\end{equation}
The proof of Lemma~\ref{lem:leading} and of \eqref{eq:F-growth} is given in
Appendix~\ref{app:combinatorics}.

\subsection{Defect Counting}
\label{sec:defect-counting}

The combinatorial estimate below replaces the coarse
\((k+1)\)-counting in \eqref{eq:bai-yin}.  For an admissible word, we define its
defect by
\[
  d(\mathbf i):=(L+1-v(\mathbf i))+(L-q(\mathbf i)).
\]
Then \(d=0\) if and only if the word is balanced tree-like; see
Appendix~\ref{app:combinatorics}.

\begin{definition}
Number the \(2L\) occurrences cyclically and also number the \(2L\) corners
between consecutive occurrences by \(0,\ldots,2L-1\).  For an occurrence \(s\),
write \(t(s)\) and \(h(s)\) for the tail and head of the ordered variable-edge:
\[
  (t(s),h(s))=
  \begin{cases}
    (s-1,s),& s\text{ is a plus occurrence},\\
    (s,s-1),& s\text{ is a minus occurrence},
  \end{cases}
\]
with indices read modulo \(2L\). Thus, a word-equivalence class is the same
thing as a partition \(\pi\) of the corner set, and the ordered variable-edge
at \(s\) is \((\pi(t(s)),\pi(h(s)))\).

A \emph{pair skeleton} is a pairing \({\mathcal P}\) of the \(2L\) occurrence
positions.  Let \(\pi({\mathcal P})\) be the finest partition of the corners
for which, whenever \(\{r,s\}\in{\mathcal P}\),
\[
  t(r)\sim t(s),\qquad h(r)\sim h(s).
\]
Put
\[
  v({\mathcal P})=|\pi({\mathcal P})|,
  \qquad
  \chi({\mathcal P})=L+1-v({\mathcal P}).
\]
The pairing is \emph{leading} if \(\chi({\mathcal P})=0\).
\end{definition}

The purpose of a pair skeleton is to forget multiplicities beyond two while
retaining enough endpoint identifications to control the loss of vertices.

We use the following dictionary.  Start with a rooted \(2L\)-gon whose sides
are the occurrence positions \(1,\ldots,2L\), read in the cyclic order
\(\gamma=(1\,2\,\cdots\,2L)\).  The side \(s\) is oriented from the corner
\(t(s)\) to the corner \(h(s)\).  A pair
\(\{r,s\}\in{\mathcal P}\) glues the side \(r\) to the side \(s\) by matching
tail to tail and head to head.  Therefore the vertices of the quotient polygon
are exactly the blocks of \(\pi({\mathcal P})\).  The quotient has one face,
\(L\) edges, and \(v({\mathcal P})\) vertices.  A pair joining opposite signs
is an untwisted band and a same-sign pair is a twisted band.  Conversely, any
labelled one-face gluing of this signed polygon gives such a pair skeleton.
Thus \(\chi({\mathcal P})=L+1-v({\mathcal P})\) is precisely the one-face
excess of the gluing, namely \(E-V+1\) for this one-face graph; it is not the
Euler characteristic of the surface.  The case \(\chi=0\) is the planar tree
case.
We use this quotient only as a signed one-face graph with a fixed boundary
tour; no orientability is assumed.  The cyclic order data used below are
inherited from the boundary tour and are recorded as part of the scheme.

\begin{lemma}[Core decomposition of pair skeletons]
\label{lem:opening}
There are constants \(C_1,D_1\), depending only on \(k\), such that the number
of pair skeletons with \(\chi({\mathcal P})=s\) is at most
\[
  F_{k,m}(C_1m)^{D_1s}.
\]
\end{lemma}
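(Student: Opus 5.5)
We bound the number of pair skeletons ${\mathcal P}$ with $\chi({\mathcal P})=s$ through a core decomposition of the glued one-face graph. The count will be the number of leading skeletons times a cost polynomial in $m$ for the core, with an exponent linear in $s$.

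\textbf{Step 1: the core.} View ${\mathcal P}$ as a signed one-face graph $G$ with $L$ edges, $v=L+1-s$ vertices, and a fixed boundary tour of length $2L$. Iteratively delete leaves, where a leaf is an untwisted edge traversed back-and-forth consecutively in the tour. This leaves a core graph $K$ with minimum degree $\ge2$ and cycle rank $s$ (cycle rank is preserved by leaf deletion). Twisted bands cannot be leaves in a tree part, because a tree-like subword must be sign-balanced. Hence every twisted band lies in $K$ and contributes to its excess.

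Standard counting gives that $K$ has at most $2(s-1)$ vertices of degree $\ge3$ and at most $3(s-1)$ maximal paths ("core edges") between them, up to an $O(1)$ adjustment when $s=1$ (a single cycle). Its topological type, a \emph{finite core scheme}, therefore ranges over at most $(Cs)^{Cs}\le C^s s!^{C}$ possibilities. We also record the signs/twists of each core edge and the cyclic order in which the boundary tour traverses the core edges. The tour traverses each core edge twice, so this order is a word of length $\le 6s$ over the core edges, giving at most $(Cs)^{Cs}$ choices.

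\textbf{Step 2: reconstruction.} Given the scheme, the skeleton is determined by two pieces of data:
\begin{enumerate}[label=(\alph*)]
\item the lengths of the core paths, i.e.\ the positions along the $2L$-cycle where each of the $\le 12s$ core-traversal segments begins, at most $(2L)^{12s}\le (2km)^{12s}$ choices;
\item the forest of hanging trees attached along the tour.
\end{enumerate}
For (b), cut the tour at the $O(s)$ marked positions. Between consecutive cuts the tour reads a core step followed by a balanced tree excursion, and the tree data is a planar forest compatible with the periodic sign pattern $(+^k-^k)^m$.

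To compare with $F_{k,m}$, insert the core steps back as a "virtual" tree. Concretely, map the skeleton injectively, up to $(Cm)^{O(s)}$-to-one, to a leading skeleton by a surgery: cut each core path at one point and reglue it as a pendant excursion. This uses the fact that leading skeletons are exactly the $(k+1)$-ary planar tree structures counted in Lemma~\ref{lem:leading}. Each of the $O(s)$ surgeries changes the tour only locally, by moving $O(1)$ boundary points, so the preimage of a given leading skeleton has size at most $(Cm)^{O(s)}$ once the scheme data is fixed.

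\textbf{Step 3: assembly.} Multiplying the factors gives
\[
F_{k,m}\cdot (Cs)^{Cs}(2km)^{Cs}\le F_{k,m}(C_1m)^{D_1 s},
\]
using $s\le 2L$. We need $s\le Cm$, so that $(Cs)^{Cs}\le (C'm)^{Cs}$; this holds because $s=L+1-v\le L=km$.

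\textbf{Main obstacle.} The hard step is the surgery in Step 2: making each twisted or surplus core path into an admissible pendant excursion while respecting the rigid sign pattern $(+^k-^k)^m$. A naive regluing breaks the periodic block structure, so the resulting word is not a valid leading skeleton.

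My first attempt is to work with the alternative encoding of leading words as Dyck-type paths with up-steps of size $k$ (the Raney/cycle-lemma encoding behind $F_{k,m}$). In that encoding each core defect becomes a bounded number of "illegal" steps, and a cycle-lemma shift repairs them at a cost of an $O(m)$ choice per repair. If the block structure prevents local repair, I would instead bound the number of forests directly. This works by applying the Raney formula to the $O(s)$ segments separately and using the estimate
\[
\prod_j F_{k,m_j}\le F_{k,\sum m_j}
\]
(supermultiplicativity, via the Raney convolution), which still loses only $(Cm)^{O(s)}$ from the segment boundaries that do not fall on full $(+^k-^k)$ blocks.
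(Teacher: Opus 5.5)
Your Steps 1 and 3 follow the paper's outline: prune to the two-core, suppress degree-two paths, and pay $(Cm)^{O(s)}$ for an $O(s)$-size scheme and its attachment positions. Step 2, however, is the only place where $F_{k,m}$ enters, and it has a genuine gap.

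A scheme edge can expand into a degree-two chain of length $\ell$ of order $m$. For instance, when $s=1$ the core is a single cycle. The tour cannot turn back at a degree-two vertex, so it runs around the cycle twice in the same direction. This produces chain occurrences $i_1<\dots<i_\ell<j_1<\dots<j_\ell$ paired as $\{i_r,j_r\}$, all same-sign and pairwise crossing.

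Your $O(s)$ cut positions fix only the ends of the two arcs. They determine neither $\ell$ nor where the $\ell$ chain occurrences sit among the hanging forests inside each arc, so (a) and (b) do not give an injective encoding. The sentence ``between consecutive cuts the tour reads a core step followed by a balanced tree excursion'' tacitly assumes bounded chain length.

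The surgery cannot repair this at cost $(Cm)^{O(s)}$. Turning $\ell$ mutually crossing same-sign pairs into a non-crossing $\pm$ matching changes $\Theta(\ell)$ pairs, not $O(1)$ boundary points, as you partly concede. The Raney/supermultiplicativity fallback bounds only the tree parts of the segments and says nothing about the chain steps.

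Moreover, the chain steps cannot be counted side by side. Take $k=2$, $t=y^4$ and $M=1+tM^3$. The forests filling a gap between two offsets mod $4$ have generating function $M$, $y^2M^2$ or $0$. The resulting transfer matrix $T$ for ``one chain step, then a forest'' has spectral radius $\rho$ with $\rho^2=y^2M^2+tM^3$. At $t_c=4/27$, $M=3/2$ this equals $(\sqrt3+1)/2>1$. So filling the two sides of the $s=1$ cycle independently, even with equal lengths $\ell$, already yields exponentially more configurations than $F_{2,m}$. Only the constraint $\operatorname{sign}(i_r)=\operatorname{sign}(j_r)$, forced by the gluing, brings the relevant radius (that of $T\otimes T$ restricted to same-sign pairs) down to exactly $1$ at $t_c$.

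The missing idea is therefore a joint encoding of both sides of each chain. The paper packs the two side strings $I,J$, the finite endpoint convention $\theta$ (pairing $i_r$ with $j_r$ or $j_{\ell+1-r}$), and all branches along them into a single decorated two-rooted leading envelope (Lemma~\ref{lem:edge-expansion-injection}). It reads each pair's twist from the fixed occurrence labels rather than recording a length-$\ell$ twist sequence (Lemma~\ref{lem:no-twist-record}). Your ``signs/twists of each core edge'' would cost $2^{\ell}$ if recorded band by band. Finally, the paper counts the resulting ordered forest of sector branches and envelopes, which has $O(s)$ roots and marks, by $[t^{m+h}]M(t)^r$. It compares this with $F_{k,m}$ through adjacent coefficient ratios (Lemma~\ref{lem:coefficient-comparison}).
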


\begin{lemma}[Pairing surplus occurrences]
\label{lem:surplus}
Let \(\mathbf i\) be an admissible word with
\[
  a=L-q(\mathbf i),\qquad b=L+1-v(\mathbf i).
\]
Then \(\mathbf i\) can be encoded by a pair skeleton \({\mathcal P}\) with
\[
  \chi({\mathcal P})\le b+2a
\]
and by at most \(C(a+b)\) local records.  For fixed \({\mathcal P}\), the
number of possible records is at most \((Cm)^{D(a+b)}\).
\end{lemma}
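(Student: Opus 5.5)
We construct the pair skeleton directly from the admissible word \(\mathbf i\), using its multiplicity profile \(r_e(\mathbf i)\), \(e\) ranging over the \(q\) distinct ordered variable-edges.

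\emph{Step 1: the excess.} Since \(\sum_e r_e=2L\) and every \(r_e\ge2\), we have
\[
  \sum_e (r_e-2)=2L-2q=2a .
\]
Hence at most \(2a\) occurrences are surplus beyond a first pair on each edge, and at most \(a\) edges have multiplicity exceeding two.

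\emph{Step 2: forming the pairs.} For each ordered variable-edge \(e\), list its occurrences in cyclic order starting from position \(1\), and pair them consecutively. If \(r_e\) is odd, the last leftover occurrence is paired with another surplus leftover, chosen canonically, for example the next leftover in cyclic order. The result is a pairing \(\mathcal P\) of all \(2L\) positions. Every pair joining two occurrences of the same edge \(e\) satisfies \(t(r)\sim t(s)\) and \(h(r)\sim h(s)\) under the corner partition \(\pi(\mathbf i)\). So the identifications of \(\pi(\mathcal P)\) forced by these pairs are all consistent with \(\pi(\mathbf i)\). At most \(2a\) pairs are "cross" pairs joining occurrences of different edges, and each of these imposes at most two corner identifications not present in \(\pi(\mathbf i)\).

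\emph{Step 3: comparing vertex counts.} The bound \(\chi(\mathcal P)\le b+2a\) is equivalent to
\[
  v(\mathcal P)\ge v(\mathbf i)-2a .
\]
To obtain it, remove the cross pairs: the partition generated by the same-edge pairs alone is a refinement of \(\pi(\mathbf i)\), so it has at least \(v(\mathbf i)\) blocks. Each identification merges at most two blocks, so adding the cross-pair identifications decreases the block count by at most one per identification. The naive count gives two identifications per cross pair, and there are at most \(2a\) such pairs, so this yields a loss of \(4a\).

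This is the main obstacle: reaching the factor \(2a\) requires a sharper accounting. I would try two refinements. First, count more carefully: odd leftovers number at most \(\#\{e: r_e \text{ odd}\}\le 2a\), so there are at most \(a\) cross pairs, each costing at most \(2\), for a total loss of at most \(2a\). Second, if needed, choose the partner so that its tail already shares a class, which saves one merge. With the first count the claimed inequality follows.

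\emph{Step 4: local records.} To reconstruct \(\pi(\mathbf i)\) from \(\mathcal P\), we record the following:
\begin{itemize}[label={}]
  \item \emph{(a)} the cross pairs, which must be split (at most \(a\) positions);
  \item \emph{(b)} for the edges with \(r_e\ge3\), which pairs are glued together into one edge class (at most \(a\) gluings, each specified by a pair index among \(L\));
  \item \emph{(c)} the extra vertex identifications of \(\pi(\mathbf i)\) not generated by pairs, which number at most \(b\) plus the cross-pair corrections, each specified by two corners among \(2L\).
\end{itemize}
This gives \(C(a+b)\) records, each chosen from \(O(m^2)\) possibilities since \(L=km\). Hence the number of records for a fixed \(\mathcal P\) is at most \((Cm)^{D(a+b)}\).
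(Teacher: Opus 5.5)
Your argument is correct and is essentially the paper's. Both proofs build a pairing that agrees with $\mathbf i$ except for at most $a$ artificial pairs: the paper pairs one principal pair per edge class and then the $2a$ surplus occurrences in cyclic order, while you pair consecutively inside each class and cross-pair only the odd leftovers. Both then charge at most two vertex merges per artificial pair to get $v(\mathcal P)\ge v(\mathbf i)-2a$, and record which pairs are artificial, the edge-class memberships, and $O(a+b)$ extra vertex gluings, each with $O(m^2)$ choices.

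A few slips need fixing, none of which affects the conclusion:
\begin{itemize}
\item The number of edges with $r_e\ge3$ is at most $2a$, not $a$.
\item Item (b) must also attach each of the at most $2a$ leftover occurrences to its class. Alternatively, you can drop (b), since the corner partition already determines the edge classes.
\item The bound in (c) needs a justification. One option is $v(\mathcal P)\le L+1$ combined with your Step~3 merge count. Another, as in the paper, is $|\pi_0|\le q+1$ by connectivity, which gives at most $b-a$ gluings.
\end{itemize}
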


\begin{proposition}[Refined word counting]
\label{prop:defect-counting}
There are constants \(C=C(k)\) and \(D=D(k)\) such that, for every \(m\ge1\)
and every \(r\ge0\), the number of equivalence classes of admissible words
with defect \(r\) is at most
\[
  F_{k,m}(Cm)^{Dr}.
\]
\end{proposition}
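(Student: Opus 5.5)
The plan is to combine Lemma~\ref{lem:opening} and Lemma~\ref{lem:surplus} through an injective encoding of admissible word classes. Fix an admissible equivalence class \(\mathbf i\) with defect \(r\), and write \(a=L-q(\mathbf i)\) and \(b=L+1-v(\mathbf i)\). Both are nonnegative: \(q\le L\) because every ordered variable-edge appears at least twice among \(2L\) occurrences, and \(b\ge0\) because \(v\le q+1\le L+1\). Hence \(r=a+b\) with \(a,b\ge0\). By Lemma~\ref{lem:surplus}, the class \(\mathbf i\) determines a pair skeleton \({\mathcal P}\) with \(\chi({\mathcal P})\le b+2a\le 2r\), together with a list of at most \(C(a+b)=Cr\) local records. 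I will use the encoding as injective: the pair \(({\mathcal P},\text{records})\) recovers the corner partition \(\pi\), and hence the equivalence class. The records are exactly what is needed to pass from the finest partition \(\pi({\mathcal P})\) to \(\pi\), and to place the surplus occurrences.

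Next, count the images. For each \(s\in\{0,1,\ldots,2r\}\), Lemma~\ref{lem:opening} gives at most \(F_{k,m}(C_1m)^{D_1s}\) skeletons with \(\chi({\mathcal P})=s\). For each fixed skeleton, Lemma~\ref{lem:surplus} gives at most \((Cm)^{Dr}\) compatible record lists. Summing over \(s\), the number of classes of defect \(r\) is at most
\[
  \sum_{s=0}^{2r} F_{k,m}(C_1m)^{D_1 s}(Cm)^{Dr}
  \le (2r+1)\,F_{k,m}(C_1m)^{2D_1r}(Cm)^{Dr}.
\]

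The remaining point is to absorb the prefactor \(2r+1\). Since \(r\le 2L+1\le 3km\), we have \(2r+1\le (Cm)^{r}\) when \(r\ge1\), after enlarging \(C\). For \(r=0\), the classes are exactly the balanced tree-like words, and Lemma~\ref{lem:leading} gives the count \(F_{k,m}\) directly. Enlarging \(C\) and \(D\) once more, depending only on \(k\), yields the bound \(F_{k,m}(Cm)^{Dr}\).

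The main obstacle is justifying injectivity, that is, that the record data really determine \(\mathbf i\). The specific difficulty is that \(\pi({\mathcal P})\) may be strictly finer than \(\pi\): the word may have further vertex coincidences not forced by the pairing. One must check that these extra identifications number at most \(O(r)\), since each one lowers \(v\) relative to \(v({\mathcal P})\). Each such identification can then be recorded as a choice of two corners, costing \(O(m^2)\) per identification. I would first try to fold this bookkeeping into the records of Lemma~\ref{lem:surplus}. Failing that, I would add it as an explicit extra factor \((Cm)^{O(r)}\), using \(v({\mathcal P})-v=\chi-b\le 2a\le 2r\). Every other step is direct substitution.
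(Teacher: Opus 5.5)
Your proposal is correct and is essentially the paper's proof: encode via Lemma~\ref{lem:surplus}, count skeletons with Lemma~\ref{lem:opening} summed over \(0\le s\le 2r\), and absorb the factor \(2r+1\) using \(r\le 2L+1=O(m)\). The injectivity you worry about is already supplied by Lemma~\ref{lem:surplus}, and its proof does not go through \(\pi({\mathcal P})\), which need not be finer than \(\pi\) (the canonical surplus pairs can join occurrences of different edge classes, and in any case \(v({\mathcal P})-v=b-\chi\), not \(\chi-b\)); instead it recovers the edge-occurrence partition from the records, then the vertex partition from \(\pi_0\) by at most \(b-a\) spanning-tree gluings.
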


The detailed encoding estimates are proved in
Appendix~\ref{app:combinatorics}.  We include the main counting argument here
because Proposition~\ref{prop:defect-counting} is the combinatorial input for
the moment estimate.  The proof of Lemma~\ref{lem:opening} is a
graph-theoretic scheme decomposition.  In particular, the argument does not
rely on a local opening theorem for non-orientable unicellular maps.  The use
of a finite core plus tree expansions is in the spirit of classical
fixed-topology map decompositions, such as Bender--Canfield
\cite{bender-canfield}.

Given a pair skeleton \({\mathcal P}\) with \(s=\chi({\mathcal P})\), let
\(G({\mathcal P})\) be its connected one-face quotient graph.  Its
\emph{scheme} \(K({\mathcal P})\) is obtained by recursively pruning
degree-one branches, then suppressing maximal degree-two paths; if the
remaining two-core is a single cycle, it is replaced by one vertex with one
loop.  The rooted boundary tour gives the cyclic order data of the scheme.
After the single-cycle exception, every scheme vertex has degree at least
three, and therefore the number \(p\) of scheme half-edges and scheme corners
satisfies
\[
  p\le Cs .
\]
The suppressed degree-two paths are not discarded.  They are precisely the
edge-expansions of the scheme.
In the single-cycle case this loop is treated as a scheme edge, not as a
pruned tree; its full expansion is counted by the same two-rooted edge-envelope
mechanism.

\begin{proposition}[Expansion of a fixed scheme]
\label{prop:fixed-scheme-expansion}
Fix a rooted signed one-face scheme with \(p\) half-edges and corners, including
the finite endpoint/side convention attached to each scheme edge.  Once the
\(O(p)\) attachment positions in the cyclic word \((z^kz^{*k})^m\) are fixed,
the number of compatible expansions of all scheme edges, together with all
rooted tree branches attached to them and to the scheme vertices, is at most
\[
  F_{k,m}(Cm)^{Cp},
\]
where \(C=C(k)\).
\end{proposition}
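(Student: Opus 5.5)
The plan is to reduce the fixed-scheme expansion to the zero-defect count of Lemma~\ref{lem:leading} by cutting the cyclic word \((z^kz^{*k})^m\) at the \(O(p)\) fixed attachment positions. Cutting at \(O(p)\) positions splits the boundary tour into at most \(Cp\) consecutive segments. Each segment is traced either along a scheme edge (a degree-two path together with the trees hanging off it) or inside a rooted tree branch attached at a scheme vertex. Once the scheme and its attachment positions are fixed, the expansions of different segments are independent of one another, apart from the constraint that their lengths, and their positions in the sign pattern, add up correctly. So the count factorizes as a sum over segment lengths \(\ell_1+\cdots+\ell_{Cp}=2L\) of a product of local counts.

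The first key step is a local bound for a single segment. A segment of the signed word whose boundary walk is a planar tree (a tree branch), or a path with trees hanging off it that is traversed twice with untwisted or twisted identifications (an edge envelope), has at most \(C\cdot G(\ell)\) expansions. Here \(G(\ell)\) denotes the number of balanced tree-like fillings of the corresponding segment of the sign pattern. For tree branches this count is exactly a Raney/Fuss--Catalan-type number, by the same bijection used in Lemma~\ref{lem:leading}: a balanced tree-like filling of a segment starting at a given phase of the sign pattern corresponds to a forest of \((k+1)\)-ary trees. For a scheme edge, its two traversals are separated by the remainder of the word. I would encode the edge by its spine length and the two attachment phases; then the hanging trees on the two sides form a two-rooted forest, which again has a Raney count, up to a factor \(O(m)\) for the spine length. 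This step is where the polynomial factor \((Cm)^{O(1)}\) per scheme element arises.

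The second step is to reassemble the segments. I would use the generating function \(T(x)\) of \((k+1)\)-ary trees, which satisfies \(T=1+xT^{k+1}\). Then a forest with \(j\) roots has coefficients \([x^N]T^j=\frac{j}{(k+1)N+j}\binom{(k+1)N+j}{N}\). The product over segments, summed over compositions of the total length, is therefore bounded by the coefficient of \(x^m\) in \(T(x)^{Cp}\), times \((Cm)^{Cp}\) for the choice of phases and spine lengths. By Raney's formula,
\[
[x^m]T^{j}\le \binom{m+j}{j}\,\max_{N\le m}[x^N]T \cdot(\text{growth ratio})^{j} \le F_{k,m}(Cm)^{Cj}.
\]
To justify this inequality I would compare directly: \([x^m]T^j/F_{k,m}\) is a ratio of binomials differing by \(O(j)\) in the arguments, so it is at most \((Cm)^{j}\). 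This comparison gives the claimed \(F_{k,m}(Cm)^{Cp}\).

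The main obstacle is the first step for scheme edges, especially twisted (same-sign) bands and the single-loop exception. The two traversals of a suppressed path read the sign pattern in possibly opposite directions and at different phases modulo \(2k\). I must check that the hanging trees are still counted by balanced Raney forests and that no exponential loss appears. To handle this, I would first try to show that each suppressed path of length \(\ell\) contributes at most \(O(1)\) choices per unit of length beyond the tree counts. If that fails, the fallback is to charge the spine to a bounded-degree walk weighted by \(\gamma_k^{-2}\) per step, and to verify that the weighted walk count is subexponential. That verification relies on the strict inequality between the spine growth and \(\gamma_k^2\) that holds on the Fuss--Catalan singularity.
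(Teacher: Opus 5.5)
Your outer frame is the paper's: cut at the \(O(p)\) attachment positions, assign the arcs to vertex sectors and edge sides, multiply the pieces, and compare \([x^{m+h}]T^{r}\) with \(F_{k,m}\) through adjacent binomial ratios (Lemmas~\ref{lem:arc-assignment} and~\ref{lem:coefficient-comparison}). The gap is the step that actually carries the proposition: counting one scheme-edge expansion with only a polynomial loss. You flag this step yourself and leave it open.

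\begin{itemize}
\item Your local bound \(C\cdot G(\ell)\) is phrased in terms of balanced tree-like fillings of a segment. A suppressed chain, however, occupies two separated arcs at unrelated phases, and in the twisted case its spine pairs are same-sign. So the chain is not such a filling, and the bound has no content there.
\item Suppose instead you record the spine length and hang the branches in the gaps. Then the branches sit in about \(2\ell\) separate gaps, each starting at its own offset inside a \(z^k\) or \(z^{*k}\) block. That is a forest with about \(2\ell\) roots and \(2\ell\) offset marks, not a two-rooted forest, and \(\ell\) can be of order \(m\).
\item The comparison with \(F_{k,m}\) only tolerates \(O(p)\) roots and \(O(p)\) marks. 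Paying a bounded factor per gap is exactly your ``\(O(1)\) choices per unit of length,'' i.e.\ a factor \(C^{\ell}\). That is exponential in \(m\) and destroys \(F_{k,m}(Cm)^{Cp}\).
\item Your fallback, a subexponential weighted walk justified by a strict inequality at the singularity, neither identifies that inequality nor proves it.
\end{itemize}

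The missing idea is the paper's two-rooted envelope (Lemma~\ref{lem:edge-expansion-injection}). The whole chain, together with the branches on both sides, is encoded as one decorated leading tree with two distinguished corners. The spine is simply the path joining those corners. The only extra data are \(O(1)\) endpoint and cut-offset marks and the block counts between marks. Neither \(\ell\), nor the interior phases, nor any twist sequence is recorded.

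Injectivity comes from the fixed word. Once the endpoint cuts are placed, the strings \(I,J\) are read from \((z^kz^{*k})^m\) in cyclic order. The pairs are then \(i_r\leftrightarrow j_{\theta(r)}\) for a single global convention \(\theta\), because each side of a degree-two chain is traversed consecutively. Whether a pair is same- or opposite-sign is then forced by the positions (Lemma~\ref{lem:no-twist-record}). So each chain costs one root and \(O(1)\) marks, all sectors and chains form an ordered forest with \(O(p)\) roots and marks, and Lemma~\ref{lem:coefficient-comparison} finishes.

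For your specific worry about twisted chains, read side \(J\) backwards with all signs swapped. This maps segments of the periodic pattern \((+^k-^k)^\infty\) to segments of the same pattern and preserves the ordered variable-edges. It also turns the same-sign pairs \(i_r\leftrightarrow j_r\) into opposite-sign pairs along the reversed path. So a twisted chain with its branches becomes a marked path in a leading tree, up to a phase jump at the junction, which is one more finite offset mark.
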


\begin{proof}
We give the main encoding here; the injective bookkeeping is formalized in
Appendix~\ref{app:combinatorics}.  Cut the fixed cyclic word at the \(O(p)\)
scheme attachment positions.  Cuts
inside a block \(z^k\) or \(z^{*k}\) contribute only finite offset data, hence
only a factor \(C^p\), and they shift the number of complete blocks by \(O(p)\).
The rooted scheme, together with its boundary tour, assigns each complementary
arc either to a vertex sector or to one side of a specified scheme edge; hence
different scheme-edge expansions have no additional interleaving freedom.

The pruned components in vertex sectors are bridge trees, so their one-face
contour crosses every edge once in each direction.  They are therefore counted
by rooted leading tree-like branches.  A suppressed degree-two chain is treated
differently: its regular neighbourhood has two boundary strings
\[
  I=(i_1,\ldots,i_\ell),\qquad J=(j_1,\ldots,j_\ell),
\]
and a finite endpoint convention pairs \(i_r\) with either \(j_r\) or
\(j_{\ell+1-r}\).  This chain is encoded by a decorated two-rooted leading
envelope.  The envelope is only a counting device; it is not asserted that the
original chain is leading.  After the two boundary strings are placed back into
the fixed cyclic word, the same-sign or opposite-sign status of each restored
pair is read from the occurrence labels.  Thus a chain of length \(\ell\) does
not contribute a factor \(2^\ell\); see Lemma~\ref{lem:no-twist-record}.

Consequently, all sectors and scheme-edge envelopes form an ordered forest of
leading tree-like objects with \(O(p)\) roots and \(O(p)\) marked corners.  If
\(M(t)=1+tM(t)^{k+1}\) is the leading-tree generating function, such forests are
bounded by coefficients of \(M(t)^r\) with \(r=O(p)\), an \(O(p)\) shift in the
block count, and \(O(p)\) marked corners.  Lagrange inversion and adjacent
coefficient-ratio estimates compare all these coefficients to
\(F_{k,m}\) at the cost of \((Cm)^{Cp}\).  Lemmas
\ref{lem:edge-expansion-normal-form}--\ref{lem:scheme-expansion} give the
injective version of this encoding.
\end{proof}

We next combine the pieces.  Lemma~\ref{lem:surplus} is the
word-to-skeleton step: starting from an admissible word, choose two principal
occurrences for each ordered variable-edge and pair the remaining surplus
occurrences canonically.  If
\[
  a=L-q,\qquad b=L+1-v,
\]
then this produces a pair skeleton \({\mathcal P}\) with
\[
  \chi({\mathcal P})\le b+2a
\]
and only \((Cm)^{O(a+b)}\) possible local records.  Lemma~\ref{lem:opening}
follows by recording the \(O(s)\)-sized scheme of a skeleton with
\(\chi({\mathcal P})=s\), the \(O(s)\) attachment positions in the boundary
word, and then applying Proposition~\ref{prop:fixed-scheme-expansion}. Thus,
pair skeletons with excess \(s\) are counted by
\[
  F_{k,m}(Cm)^{O(s)}.
\]
Combining this with the local records and using \(s\le b+2a\le2(a+b)\) gives
Proposition~\ref{prop:defect-counting}.

\begin{remark}
The feature of Proposition~\ref{prop:defect-counting} is that the base count is
\(F_{k,m}\), not a crude Catalan number of length \(km\) and not a
local \((k+1)^{2m}\) factor.  The polynomial loss \((Cm)^{Dr}\) is harmless
when \(m=O(\log n)\).
\end{remark}

\subsection{To Complete the Moment Estimate}

We finally make sum over the defects.  For an admissible word, set
\[
  a=L-q,\qquad b=L+1-v .
\]
The bounded-entry moment estimate gives a factor
\(\eps_n^{2a}n^a\), while summing vertex labels contributes
\(n^{-L}n^v\).  Thus a class with parameters \((a,b)\) contributes at most
\[
  n^{-L}n^v\eps_n^{2a}n^a
  =
  n\,\eps_n^{2a}n^{a-b}.
\]
Since admissibility gives \(v\le q+1\), one has \(b\ge a\).  Writing
\(c=b-a\ge0\), and using Proposition~\ref{prop:defect-counting} for defect
\(a+b=2a+c\), the total contribution is bounded by
\[
  nF_{k,m}
  \sum_{a,c\ge0}
  \left((Cm)^{2D}\eps_n^2\right)^a
  \left((Cm)^Dn^{-1}\right)^c .
\]
After \(k,A\) and the constants in
Proposition~\ref{prop:defect-counting} have been fixed, choose \(B\) large
enough that \((CA\log n)^{2D}\eps_n^2\to0\).  Then, for \(m\le A\log n\), both
geometric ratios are \(o(1)\) uniformly in \(m\).  This is the only point where
the logarithmic moment range is used in an essential way.  Hence the moment is
\[
  \E\Tr\bigl((X_n^kX_n^{*k})^m\bigr)
  \le nF_{k,m}(1+o(1))
  \le n(\gamma_k^2+\eta)^m .
\]
The fully detailed proof of Theorem~\ref{prop:moment}, including the
uniformity in \(m\), is given in Appendix~\ref{app:moment}.

\section{The Lower Bound in Theorem~\ref{thm:main}}
\label{sec:lower-bound-proof}
\begin{proof}[Proof of the lower bound in Theorem~\ref{thm:main}]
If \(\sigma=0\), the claim is trivial.  Assume \(\sigma>0\).  Let
\(\mu_{n,k}\) be the empirical eigenvalue distribution of the positive matrix
\[
  \sigma^{-2k}X_n^kX_n^{*k}.
\]
Moment convergence to the \(k\)-th Fuss--Catalan law for powers of
non-Hermitian i.i.d.\ random matrices was proved in
\cite{alexeev-gotze-tikhomirov}; the almost sure empirical-distribution
convergence used here is the strengthened form in
\cite{alexeev-almost-sure-fuss}.  Hence \(\mu_{n,k}\) converges weakly almost
surely to the \(k\)-th Fuss--Catalan law \(\mu_k\).  The
moments of \(\mu_k\) are \(F_{k,m}\), and its right edge is \(\gamma_k^2\).
Alexeev states the theorem with \(X^T\) in the real case; the same
moment-and-variance proof applies to complex entries after replacing \(X^T\)
by \(X^*\).  In the trace expansion, an \(X^*\)-edge carries the conjugated
entry, the regular paths pair each entry with its conjugate and are counted in
the same way, while the non-regular paths are estimated as in the real case
after the usual truncation.

Fix \(\varepsilon>0\).  Since \(\gamma_k^2\) belongs to the support of \(\mu_k\),
\[
  \mu_k((\gamma_k^2-\varepsilon,\infty))>0.
\]
By the Portmanteau theorem, almost surely,
\[
  \liminf_{n\to\infty}\mu_{n,k}((\gamma_k^2-\varepsilon,\infty))
  \ge
  \mu_k((\gamma_k^2-\varepsilon,\infty))
  >
  0.
\]
Thus, for all sufficiently large \(n\), the matrix
\(\sigma^{-2k}X_n^kX_n^{*k}\) has an eigenvalue larger than
\(\gamma_k^2-\varepsilon\).  Equivalently,
\[
  \liminf_{n\to\infty}\sigma^{-2k}\|X_n^k\|^2
  \ge
  \gamma_k^2-\varepsilon.
\]
Letting \(\varepsilon\downarrow0\) gives the claimed lower bound.
\end{proof}

\section{The Proof of Corollary~\ref{cor:gaussian-powers}}
\label{sec:gaussian-proof}
\begin{proof}[Proof of Corollary~\ref{cor:gaussian-powers}]
Work in the tracial noncommutative probability space
\((\mathcal A,\tau)\) from Section~\ref{sec:preliminaries}.  Let
\[
  c=\frac{s_1+i s_2}{\sqrt2}
\]
be the circular element with \(\tau(cc^*)=1\).  Decompose \(Y_n\) as
\[
  H_{1,n}:=\frac{Y_n+Y_n^*}{\sqrt2},
  \qquad
  H_{2,n}:=\frac{Y_n-Y_n^*}{i\sqrt2}.
\]
Then \(H_{1,n}\) and \(H_{2,n}\) are independent normalized GUE matrices whose
off-diagonal entries have complex variance \(1/n\) and whose diagonal entries
are real centered Gaussians of variance \(1/n\).  With this diagonal convention,
each \(H_{r,n}\) converges strongly to the semicircular element \(s_r\), and
\[
  Y_n=\frac{H_{1,n}+iH_{2,n}}{\sqrt2}.
\]
Male's strong convergence theorem for independent Gaussian matrix families
\cite{male} gives, for every noncommutative polynomial \(P\),
\[
  \|P(Y_n,Y_n^*)\|\to \|P(c,c^*)\|
  \qquad \text{a.s.}
\]
Taking \(P(z,z^*)=z^k\), we obtain
\[
  \|Y_n^k\|\to \|c^k\|
  \qquad \text{a.s.}
\]

It remains to compute \(\|c^k\|\).  Since \(c^kc^{*k}\) has compactly supported
law,
\[
  \|c^k\|^2
  =
  \|c^kc^{*k}\|
  =
  \lim_{m\to\infty}
  \tau\bigl((c^kc^{*k})^m\bigr)^{1/m}.
\]
By the free Wick formula for the circular element, only non-crossing pairings
that match a copy of \(c\) with a copy of \(c^*\) contribute to
\(\tau((c^kc^{*k})^m)\).  For the word \((c^kc^{*k})^m\), these compatible
non-crossing pairings are exactly the pairings counted in
Lemma~\ref{lem:leading}.  Hence
\[
  \tau\bigl((c^kc^{*k})^m\bigr)
  =
  F_{k,m}
  =
  \frac1{km+1}\binom{(k+1)m}{m}.
\]
Therefore
\[
  \|c^k\|^2
  =
  \lim_{m\to\infty}
  \left[
  \frac1{km+1}\binom{(k+1)m}{m}
  \right]^{1/m}
  =
  \gamma_k^2,
\]
which proves \(\|Y_n^k\|\to\gamma_k\) almost surely.
\end{proof}

\section{The Proofs of Theorem~\ref{thm:sampled-products} and Corollary~\ref{cor:independent-products}}
\label{sec:cor-proof}

We first record a small consequence of the leading-word decomposition.

\begin{lemma}[Level preservation of leading pairings]
\label{lem:level-preservation}
In every balanced tree-like pairing of the boundary word
\[
  (z^kz^{*k})^\ell,
\]
a plus occurrence in the \(j\)-th level of a \(z^k\)-block is paired with a
minus occurrence in the corresponding \(j\)-th conjugate level of a
\(z^{*k}\)-block, where conjugate levels are indexed by the original factor
before the adjoint reversal.
\end{lemma}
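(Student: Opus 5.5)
We prove the claim with a parity (height) invariant along the boundary walk of a balanced tree-like word. Label the occurrences in one period $z^kz^{*k}$ as follows. The $j$-th plus occurrence of a $z^k$-block gets level $j$, for $j=1,\dots,k$. The minus occurrences of the following $z^{*k}$-block get levels $k,k-1,\dots,1$ in reading order. This is the conjugate-level convention, since $(z^k)^*=z^{*}\cdots z^{*}$ reverses the factors. Along the cyclic boundary word, the corners then carry a well-defined residue $h\in\mathbb Z/k$: it increases by one at each plus step and decreases by one at each minus step. Because each block has exactly $k$ steps, $h$ returns to $0$ at every block boundary. With this, an occurrence at level $j$ is exactly an occurrence whose tail corner (in the orientation $t(s)\to h(s)$ of the ordered variable-edge) has residue $j-1$ and whose head corner has residue $j$. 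This holds for both signs, because for a minus occurrence the roles of $s-1$ and $s$ are swapped in the definition of $(t(s),h(s))$.

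By Lemma~\ref{lem:leading} and the definition, a balanced tree-like pairing gives a quotient graph $G(\mathcal P)$ that is a tree with $L$ edges and $L+1$ vertices. The boundary tour traverses each edge exactly twice, once on a plus occurrence and once on a minus occurrence. The key step is to show that the residue function descends to the vertices of the tree. In other words, corners identified by $\pi(\mathcal P)$ carry the same residue.

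To prove this, we use the tree structure. Walk the contour of the tree starting from the root corner of residue $0$. Since the tree is a bridge tree, the contour crosses each edge once away from the root and once back toward it. One crossing is a plus occurrence with edge direction $(t,h)$. The other is a minus occurrence traversing the same ordered variable-edge $(t,h)$, but with the walk going $h\to t$. In both crossings the residue moves from the tail corner to the head corner by $+1$, because a plus step adds one going $t\to h$ and a minus step subtracts one going $h\to t$. We then induct on the depth of a vertex in the tree. Every return of the contour to a vertex comes back along the same edge it left by. Reversing that edge undoes the residue change, so all visits to the same vertex see the same residue.

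Granting this, a paired plus occurrence and minus occurrence share their tail vertex and their head vertex. Their tail residues therefore agree, so their levels agree, which is the claim.

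I expect the main obstacle to be the consistency check that residues are well defined on vertices. One worry is that a vertex could be revisited after the contour passes through a block boundary. This is harmless because residues are taken modulo $k$ and every block has length $k$. The other point to make explicit is the sign bookkeeping for minus occurrences under the conjugate-level convention. An alternative route is to use the $(k+1)$-ary tree bijection from the proof of Lemma~\ref{lem:leading} in Appendix~\ref{app:combinatorics}. There one would check directly that the child slot $j$ of an internal vertex corresponds to level $j$ of both the $z$-run and the $z^{*}$-run that pair up. I would fall back on this route only if the residue argument runs into trouble with the rooting conventions.
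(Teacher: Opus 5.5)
Your proof is correct, but it takes a different route from the paper's. The paper argues recursively through the first-return decomposition of Lemma~\ref{lem:leading}. The $k$ plus occurrences of the root block are closed in last-in-first-out order, which matches level $j$ with conjugate level $j$. The same reasoning is then repeated inside each of the $k+1$ sub-intervals, relying on the asserted fact that these pieces are again block-compatible.

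You instead use a global potential. Every occurrence, of either sign, raises the corner height by exactly one from its tail corner to its head corner. On a tree, any closed sub-walk crosses each edge equally often in the two directions, so the height descends to vertices. Two paired occurrences share their tail vertex, hence have equal tail height and equal level.

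Your argument uses only that the quotient graph is a tree, not planarity or the non-crossing recursion. It also supplies the invariant behind the paper's claim that the sub-intervals are block-compatible: their boundary arcs begin and end at the same height. The paper's version has the advantage of living inside the $(k+1)$-ary tree bijection that is reused for counting.

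Two small cleanups. First, reducing modulo $k$ is unnecessary. The integer height is already periodic: it runs $0,1,\dots,k$ through a $z^k$-block and back to $0$ through the following $z^{*k}$-block, which disposes of your block-boundary worry. Second, phrase the consistency step as an induction on the length of a closed excursion, or directly as the cut argument above. A top-down induction on depth does not by itself control returns to a vertex from its children.
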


\begin{proof}
Use the first-return decomposition in the proof of
Lemma~\ref{lem:leading}.  Put a root at the first occurrence of a root block.
The \(k\) plus occurrences of this block are closed by matching minus
occurrences in last-in-first-out order; otherwise the planar non-crossing
pairing would either cross a previous return or leave one of the \(k+1\)
subintervals with unmatched boundary.  Thus the first plus level is matched to
the first conjugate level, the second plus level to the second conjugate level,
and so on, where conjugate levels are indexed by the original factor before the
adjoint reversal.  The same argument applies recursively in each of the
\(k+1\) subintervals of the first-return decomposition.  Hence every leading
pair preserves the factor level.  For a fixed label word
\(\alpha=(\alpha_1,\ldots,\alpha_k)\), the two occurrences in such a pair
therefore carry the same matrix label \(\alpha_j\).  Consequently the leading
labeled tree-like words are counted by the same number \(F_{k,\ell}\),
uniformly in \(\alpha\), even when some labels repeat.
\end{proof}

\begin{proof}[Proof of Theorem~\ref{thm:sampled-products}]
We first prove the deterministic-index statement, simultaneously for all label
words in the fixed alphabet \(\{1,\ldots,N\}\).  We assume \(\sigma=1\); the
general case follows by scaling.  For
\(\alpha=(\alpha_1,\ldots,\alpha_k)\in\{1,\ldots,N\}^k\), put
\[
  Y_{\alpha,n}:=
  X_{\alpha_1,n}X_{\alpha_2,n}\cdots X_{\alpha_k,n}.
\]

Apply Lemma~\ref{lem:truncation} to each of the finitely many nested arrays
with labels \(1,\ldots,N\), and intersect the resulting full-probability
events.  After replacing the finitely many truncation parameters by their
maximum, we obtain truncated matrices \(\wt X_{a,n}\), \(1\le a\le N\), with a
common bound \(\eps_n\sqrt n\), where \(\eps_n(\log n)^B\to0\) for every fixed
\(B<\infty\), such that
\[
  \max_{1\le a\le N}\|X_{a,n}-\wt X_{a,n}\|\to0,
  \qquad
  \max_{1\le a\le N}\bigl(\|X_{a,n}\|+\|\wt X_{a,n}\|\bigr)=O(1)
  \qquad\text{a.s.}
\]
The second bound follows from the \(k=1\) case of the truncated moment
estimate and the same argument as in Theorem~\ref{thm:main}.  Hence, uniformly
in \(\alpha\in\{1,\ldots,N\}^k\),
\[
  \left\|
  Y_{\alpha,n}
  -
  \wt X_{\alpha_1,n}\wt X_{\alpha_2,n}\cdots \wt X_{\alpha_k,n}
  \right\|
  \le
  k M_n^{k-1}\max_{1\le a\le N}\|X_{a,n}-\wt X_{a,n}\|
  \to0
\]
almost surely, where
\(M_n=\max_{1\le a\le N}(\|X_{a,n}\|+\|\wt X_{a,n}\|)\).
It is therefore enough to prove the upper bound for the truncated products
\[
  \wt Y_{\alpha,n}:=
  \wt X_{\alpha_1,n}\wt X_{\alpha_2,n}\cdots \wt X_{\alpha_k,n}.
\]

For a fixed word \(\alpha\), expand
\[
  \E\Tr\bigl((\wt Y_{\alpha,n}\wt Y_{\alpha,n}^*)^m\bigr)
\]
as in the proof of Theorem~\ref{thm:main}.  The cyclic sign-and-label pattern is
\[
  \alpha_1,\ldots,\alpha_k,
  \alpha_k,\ldots,\alpha_1
\]
on each copy of \(Y_{\alpha,n}Y_{\alpha,n}^*\), with the second half conjugated.
We repeat the definitions of Section~\ref{sec:proof--main-proposition} with
the variable-edge type enlarged to include the matrix label.  Thus an
occurrence has type
\[
  (\lambda_s,u,v),
\]
where \(\lambda_s\) is the fixed matrix label assigned to that occurrence by
the boundary word and \((u,v)\) is the ordered coordinate pair.  Admissibility,
the number \(q_{\rm lab}\) of distinct variable-edges, and the defect
\[
  d_{\rm lab}:=(L+1-v)+(L-q_{\rm lab})
\]
are defined with respect to these labeled variable-edges.  Since the label
pattern is fixed by the boundary word, the labels impose only additional
restrictions on which occurrences may represent the same random variable.  The
pair-skeleton, corner-partition, scheme, and local-record encodings depend only
on the occurrence positions and coordinate identifications.  Consequently
Proposition~\ref{prop:defect-counting} applies without change and gives
\[
  \#\{\text{labeled admissible classes with defect }r\}
  \le F_{k,m}(Cm)^{Dr},
\]
uniformly over \(\alpha\in\{1,\ldots,N\}^k\).

For a word with labeled parameters
\[
  a=L-q_{\rm lab},\qquad b=L+1-v,
\]
the same moment estimate as in Appendix~\ref{app:moment} gives the contribution
bound
\[
  n\,\eps_n^{2a}n^{a-b}
  \#\{\text{labeled classes with defect }a+b\}.
\]
As before, admissibility gives \(b\ge a\).  Summing over \(a\) and
\(c=b-a\), and using the displayed labeled defect count, yields uniformly in
\(\alpha\)
\[
  \E\Tr\bigl((\wt Y_{\alpha,n}\wt Y_{\alpha,n}^*)^m\bigr)
  \le n(\gamma_k^2+\eta)^m
\]
for \(1\le m\le A\log n\).  Markov's inequality and the
Borel--Cantelli argument used for the upper bound in
Theorem~\ref{thm:main}, followed by a finite union bound over the \(N^k\)
possible words, give
\[
  \limsup_{n\to\infty}
  \max_{\alpha\in\{1,\ldots,N\}^k}\|\wt Y_{\alpha,n}\|
  \le \gamma_k
  \qquad\text{a.s.}
\]
The preceding truncation comparison transfers this estimate to
\(Y_{\alpha,n}\), uniformly in \(\alpha\).

It remains to prove the lower bound.  We first work with the truncated
products.  Let
\[
  \wt M_{\ell,n}(\alpha)
  :=
  \frac1n\Tr\bigl((\wt Y_{\alpha,n}\wt Y_{\alpha,n}^*)^\ell\bigr).
\]
For fixed \(\ell\ge1\), the moment-and-variance expansion is justified for
\(\wt M_{\ell,n}(\alpha)\), since the entries are bounded.  The expansion is
the fixed-\(\ell\) version of the labeled expansion above, with \(L=k\ell\).
Non-admissible labeled words vanish by centering.  Among admissible words, the
only terms of order one after the normalization \(n^{-L-1}\) are the balanced
tree-like words, namely those with \(q_{\rm lab}=L\) and \(v=L+1\).
If \(q_{\rm lab}<L\) or \(v<L+1\), then the fixed-\(\ell\) estimate gives
either a negative power of \(n\) or a positive power of the truncation parameter
\(\eps_n\).  Indeed, with
\[
  a=L-q_{\rm lab},\qquad b=L+1-v,
\]
the normalized contribution is bounded by \(\eps_n^{2a}n^{a-b}\), and
\(b\ge a\).  Thus if \((a,b)\ne(0,0)\), the contribution tends to zero either
because \(b>a\) or because \(a>0\).  This convergence is uniform over the finite
set \(\{1,\ldots,N\}^k\).
By Lemma~\ref{lem:level-preservation}, the leading labeled tree-like words are
counted by \(F_{k,\ell}\).  Therefore
\[
  \max_{\alpha\in\{1,\ldots,N\}^k}
  |\E\wt M_{\ell,n}(\alpha)-F_{k,\ell}|\to0 .
\]
The variance estimate is unchanged.  High-multiplicity labeled edges are
controlled by the same bounded-entry estimate as in Appendix~\ref{app:moment};
for fixed \(\ell\), the number of profiles is finite and the factors involving
\(\eps_n\) are harmless.  In the expansion of
\[
  \E\wt M_{\ell,n}(\alpha)^2,
\]
any connected gluing between the two traces
loses at least two free vertex labels, while disconnected leading pairs factor
into the square of the first moment.  The sampled labels impose only additional
independence constraints.  Equivalently, after grouping the expansion by the
finitely many combinatorial profiles, every non-factorizing profile contributes
\(O(n^{-2})+o(n^{-2})\) to the covariance, uniformly in \(\alpha\); the
\(o(n^{-2})\) terms are those carrying at least one extra truncation factor.
Hence, for every fixed \(\ell\),
\[
  \Var(\wt M_{\ell,n}(\alpha))\le C_{\ell,k,N}n^{-2},
\]
uniformly in \(\alpha\).  A finite union bound and Borel--Cantelli give
\[
  \max_{\alpha\in\{1,\ldots,N\}^k}
  |\wt M_{\ell,n}(\alpha)-F_{k,\ell}|
  \to0
  \qquad\text{a.s.}
\]

We now transfer the moment convergence to the original matrices.  Put
\[
  M_{\ell,n}(\alpha)
  :=
  \frac1n\Tr\bigl((Y_{\alpha,n}Y_{\alpha,n}^*)^\ell\bigr),
\]
and set
\[
  A_{\alpha,n}=Y_{\alpha,n}Y_{\alpha,n}^*,
  \qquad
  B_{\alpha,n}=\wt Y_{\alpha,n}\wt Y_{\alpha,n}^* .
\]
The upper bound already proved and the truncation comparison imply that
\[
  \max_{\alpha\in\{1,\ldots,N\}^k}
  \bigl(\|A_{\alpha,n}\|+\|B_{\alpha,n}\|\bigr)=O(1)
\]
almost surely.  Moreover, using
\[
  A_{\alpha,n}-B_{\alpha,n}
  =
  (Y_{\alpha,n}-\wt Y_{\alpha,n})Y_{\alpha,n}^*
  +
  \wt Y_{\alpha,n}(Y_{\alpha,n}-\wt Y_{\alpha,n})^*,
\]
we get \(\max_\alpha\|A_{\alpha,n}-B_{\alpha,n}\|\to0\) almost surely.  Thus
\[
  |M_{\ell,n}(\alpha)-\wt M_{\ell,n}(\alpha)|
  \le
  \|A_{\alpha,n}^\ell-B_{\alpha,n}^\ell\|
  \le
  \ell\max(\|A_{\alpha,n}\|,\|B_{\alpha,n}\|)^{\ell-1}
  \|A_{\alpha,n}-B_{\alpha,n}\|,
\]
and consequently
\[
  \max_{\alpha\in\{1,\ldots,N\}^k}
  |M_{\ell,n}(\alpha)-F_{k,\ell}|
  \to0
  \qquad\text{a.s.}
\]
for every fixed \(\ell\).
Taking a countable intersection over \(\ell\ge1\), we may assume that the preceding convergence holds for all \(\ell\) simultaneously.

Let \(\nu_{\alpha,n}\) be the empirical eigenvalue distribution of
\[
  Y_{\alpha,n}Y_{\alpha,n}^* .
\]
Since the upper bound gives a compact spectral window up to an \(o(1)\) error,
and since the Fuss--Catalan law is compactly supported and determined by its
moments, the preceding moment convergence implies
\[
  \nu_{\alpha,n}\Rightarrow\mu_k
  \qquad\text{a.s.}
\]
uniformly over \(\alpha\in\{1,\ldots,N\}^k\).  Since the label set
\(\{1,\ldots,N\}^k\) is finite, these weak convergences hold on a common
full-probability event.  The right edge of
\(\mu_k\) is \(\gamma_k^2\).  Therefore, for every \(\varepsilon>0\),
\[
  \mu_k((\gamma_k^2-\varepsilon,\infty))>0,
\]
and the Portmanteau theorem gives
\[
  \liminf_{n\to\infty}
  \min_{\alpha\in\{1,\ldots,N\}^k}
  \nu_{\alpha,n}\bigl((\gamma_k^2-\varepsilon,\infty)\bigr)
  \ge
  \mu_k((\gamma_k^2-\varepsilon,\infty))>0
  \qquad\text{a.s.}
\]
Hence, for every deterministic or randomly sampled label word,
\[
  \liminf_{n\to\infty}\|Y_{\alpha,n}\|
  \ge \gamma_k
  \qquad\text{a.s.}
\]
Letting \(\varepsilon\downarrow0\) proves the lower bound when \(\sigma=1\).
The general case follows by applying the normalized argument to
\(\sigma^{-1}W\) and multiplying the norm by \(\sigma^k\).

The preceding upper and lower bounds hold on one full-probability event
simultaneously for every deterministic label word with letters in
\(\{1,\ldots,N\}\).  Therefore they also hold for the randomly sampled label
word, which is independent of the matrices.
\end{proof}

\begin{proof}[Proof of Corollary~\ref{cor:independent-products}]
Apply Theorem~\ref{thm:sampled-products} with \(N=k\) and the deterministic
index word \(I_j=j\), \(1\le j\le k\).
\end{proof}

\section{Concluding Remarks}
\label{sec:conclusion}

We have established a fixed-product universality statement for non-Hermitian
random matrices with i.i.d.\ entries under the finite fourth moment assumption.
For every fixed \(k\), the repeated power \(X_n^k\), products sampled with
replacement from a fixed finite pool of independent copies, and the product of
\(k\) independent copies all have the same almost sure limiting norm
\[
  \sigma^k\gamma_k
  =
  \sigma^k\sqrt{\frac{(k+1)^{k+1}}{k^k}}.
\]
In this sense the freeness coefficient is a genuine limiting spectral norm
constant for these fixed noncommutative products, not only a formal constant
suggested by the leading moment calculation.

For comparison, in the symmetric Wigner analogue with the same variance
normalization, if \(S_n\) is one normalized symmetric matrix and
\(S_{1,n},\ldots,S_{k,n}\) are independent copies in a standard strong
asymptotic-freeness setting, then
\[
  \|S_n^k\|\to (2\sigma)^k,
  \qquad
  \|S_{1,n}\cdots S_{k,n}\|\to \sigma^k\gamma_k
  \qquad \text{a.s.}
\]
Thus the independent-product constant agrees with the non-Hermitian product
constant in the present paper, while the power case is different for \(k\ge2\).
The reason is structural: a symmetric matrix is normal, so \(S_n^k\) is governed
by the edge of the semicircle law through \(\|S_n^k\|=\|S_n\|^k\)
\cite{bai-yin-wigner}, whereas the non-Hermitian power \(X_n^k\) remains a
genuinely non-normal product and has the Fuss--Catalan singular-value edge.  For
independent products, by contrast, both models are governed by the same free
multiplicative-convolution edge
\cite{voiculescu-dykema-nica,nica-speicher,haagerup-thorbjornsen,male}.

Our main technical challenge is a direct high-moment argument.  After truncation,
the leading trace words are the balanced tree-like words counted by the
Fuss--Catalan number \(F_{k,m}\).  The essential point is to control the
admissible non-leading words created by matrix products, especially the extra
collision patterns that occur when the same matrix is reused in every factor of
\(X_n^k\).  This is done through the defect-sensitive enumeration in
Proposition~\ref{prop:defect-counting}, where the loss over the Fuss--Catalan
leading count is only polynomial in the logarithmic moment parameter and
therefore remains summable in the moment method.

The comparison with the standard Bai--Yin power estimate should be read in this
framework.  In the power case, the product-limit theorem yields the sharper
upper edge \(\sigma^k\gamma_k\), which has the correct \(\sqrt{k}\)-scale for
fixed \(k\), rather than the coarser \(k+1\) scale.  This improvement is
therefore an application of the fixed-product limit, while the main phenomenon
is the stability of the freeness coefficient across repeated, fixed-pool
sampled, and independent products.

\appendix

\section{Auxiliary Proofs for the Reduction and Truncation}
\label{app:reduction}

The auxiliary estimates are stated in the form needed for the nested almost
sure truncation.  They are consequences of the Bai--Yin upper-bound
moment method \cite{bai-yin}; the classical truncation and centralization
scheme appears, for instance, in \cite[Lemmas~2.2--2.3]{yin-bai-krishnaiah}.

\begin{lemma}[Bounded fourth-moment Bai--Yin estimate]
\label{lem:bounded-fourth-moment-by}
Fix \(A>0\) and \(K,K_4<\infty\).  Let
\(Z_n=(z_{ij}^{(n)})_{1\le i,j\le n}\)
have independent centered entries, not necessarily identically distributed,
such that, for some \(0<\delta\le1\),
\[
  \E|z_{ij}^{(n)}|^2\le\delta^2,
  \qquad
  \E|z_{ij}^{(n)}|^4\le K_4\delta^4,
  \qquad
  |z_{ij}^{(n)}|\le K\delta\sqrt n
  \qquad(1\le i,j\le n).
\]
Then there is a constant \(C=C(A,K,K_4)\) such that, uniformly for
\(1\le m\le A\log n\),
\[
  \E\Tr\left(\left(n^{-1}Z_nZ_n^*\right)^m\right)
  \le
  n(C\delta)^{2m}
  \qquad\text{for all sufficiently large }n.
  \tag{A.1}
\]
\end{lemma}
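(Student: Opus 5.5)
We prove (A.1) by the classical Bai--Yin/Yin--Bai--Krishnaiah word expansion for the sample-covariance-type moment \(\Tr((n^{-1}Z_nZ_n^*)^m)\), keeping track of \(\delta\) in every factor. Rescaling \(z_{ij}=\delta y_{ij}\) reduces the claim to the case \(\delta=1\) with \(\E|y_{ij}|^2\le1\), \(\E|y_{ij}|^4\le K_4\), \(|y_{ij}|\le K\sqrt n\). The left side of (A.1) is homogeneous of degree \(2m\) in the entries, so the general case follows by multiplying by \(\delta^{2m}\). The factor \(\delta\) in the truncation level must scale consistently, and it does: \(|y_{ij}|\le K\sqrt n\) after rescaling. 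It therefore suffices to show \(\E\Tr((n^{-1}YY^*)^m)\le nC^{2m}\) uniformly for \(m\le A\log n\).

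Expand
\[
\E\Tr\bigl((n^{-1}YY^*)^m\bigr)=n^{-m}\sum_{\mathbf i}\E\prod_{s=1}^{2m}\zeta_s(\mathbf i),
\]
with the alternating sign pattern \(+,-,+,-,\dots\). This is the case \(k=1\) of \eqref{eq:trace-expansion}, with \(L=m\). Only admissible words contribute. For a word with \(q\) distinct edges of multiplicities \(r_1,\dots,r_q\ge2\) summing to \(2m\), we use the moment bound
\[
\E|y|^{r}\le (K\sqrt n)^{r-4}K_4\quad(r\ge4),\qquad \E|y|^r\le \max(1,K_4)\quad(r=2,3).
\]
Since \(r_e-4\le 2(r_e-2)\), the product of these bounds is at most \(C_0^{q}(K\sqrt n)^{\sum_e(r_e-2)_+}\), and \(\sum_e (r_e-2)=2(m-q)\). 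Hence a class with \(v\) vertices and \(q\) edges contributes at most \(n^{-m}n^{v}C_0^{m}(K^2 n)^{m-q}=n\,C_0^mK^{2(m-q)}n^{(v-1-q)}\). Here \(v\le q+1\) by connectivity, so \(n^{v-1-q}\le1\).

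The remaining step is to count equivalence classes of admissible words with given \((v,q)\). Here we avoid Proposition~\ref{prop:defect-counting}, since that result is stated for the leading normalization \(\E|w|^2=1\) and is what we need to bypass. Instead we use the crude Bai--Yin count. Encode a word by marking each step as innovation (new vertex), first return along a tree edge, or other. There are \(\binom{2m}{\cdot}\le 4^m\) choices for the innovation/return pattern. A non-innovative step that is not a forced tree return is determined by its endpoint among \(v\le 2m\) vertices, and there are at most \(O(m-v+1)+O(m-q)\) such steps. This is the standard Füredi--Komlós/Bai--Yin argument: each "non-forced" step consumes a surplus edge or a repeated-edge multiplicity. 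The result is at most \(4^m(2m)^{C(2m-q-v+1)}\) classes.

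Summing over \(c=q+1-v\ge0\) and \(a=m-q\ge0\) gives
\[
\E\Tr\bigl((n^{-1}YY^*)^m\bigr)\le n(4C_0)^m\sum_{a,c\ge0}\bigl((2m)^{2C}K^2\bigr)^a\bigl((2m)^{C}n^{-1}\bigr)^c.
\]
The \(c\)-series is bounded since \(m\le A\log n\). The \(a\)-series is the main obstacle: the weight \((Cm)^{C}\) per unit of \(a\) is not small, unlike in Theorem~\ref{prop:moment} where \(\eps_n\to0\) helped. The step I would try first is to absorb it into the exponential. Since \(a\le m\), we have \(((2m)^{2C}K^2)^a\le \exp(C'a\log m)\), and this is not \(C^m\) in general. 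So the counting must be refined so that each unit of edge-multiplicity excess costs only a constant rather than a power of \(m\). The standard fix, following \cite{bai-yin,yin-bai-krishnaiah}, is to charge each non-forced step at a vertex already visited a factor \(2m\), but to pair it with a factor \(n^{-1}\) coming from the vertex deficit. Only the multiplicity steps that return along already-used edges incur no vertex loss, and those can be chosen among the at most \(v\)-bounded set of previously traversed edges adjacent to the current vertex. A local degree argument then bounds their number of choices by a constant \(K'\) times \(2\) per step. With this refinement the \(a\)-sum becomes \(\sum_a (CK^2)^a\) with the exponent bounded by \(m\), giving \((CK^2)^m\), and combining everything yields \(nC^{2m}\).
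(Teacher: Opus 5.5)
There is a genuine gap. Your reduction to $\delta=1$ and the word expansion are fine, but the argument breaks at the $a$-series obstacle you identify, and the proposed repair is false. When you weaken $\E|y|^{r}\le K_4(K\sqrt n)^{r-4}$ to $(K\sqrt n)^{r-2}$, you discard the only factor that pays for multiplicity excess. Relative to the crude bound, an edge of multiplicity $\ge4$ carries an extra $n^{-1}$, and one of multiplicity $3$ an extra $n^{-1/2}$. After this loss no counting can close the sum.

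To see this, take the star words: one row vertex and $j$ column vertices, with the walk going out and back $m$ times and using every leaf. They have $v=q+1=j+1$, so $c=0$ and $a=m-j$. Your crude bound gives each class weight $n\,C_0^{m}K^{2(m-j)}$, with no negative power of $n$. There are $S(m,j)$ such classes (Stirling numbers), so summing over $j$ produces the Bell number $B_m=e^{(1+o(1))m\log m}$, which is not $O(nC^{2m})$. The same words refute the ``local degree argument'': a repeated step at the hub chooses among $j$ used edges, and $j$ can be of order $m$.

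There is also a direct obstruction to your conclusion. Your final constant $4C_0\cdot CK^2$ does not depend on $A$, but no $A$-independent constant can work. Take entries equal to $\pm K\sqrt n$ with probability $c_0n^{-2}$, where $c_0=\min(1,K_4K^{-4})$, and $0$ otherwise; they satisfy the hypotheses with $\delta=1$. Since $\Tr(M^m)\ge\sum_iM_{ii}^m$ for the positive semidefinite $M=n^{-1}YY^*$, one gets $\E\Tr(M^m)\ge n(K^2j)^m\Prob(N_1=j)\sim n(K^2j)^m c_0^j n^{-j}/j!$, with $N_1\sim\mathrm{Bin}(n,c_0n^{-2})$. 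Taking $m=\lfloor A\log n\rfloor$ and $j=\lceil A\rceil$ forces $C^2\ge K^2A/e^2$ for $A\ge1$.

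The missing idea is the order of operations in the paper's sketch: apply the multiplicity-dependent moment bounds \emph{before} summing over profiles. Use $\E|y|^2\le1$ on double edges, $\E|y|^3\le K_4^{1/2}$ on triple edges, and $K_4(K\sqrt n)^{r-4}$ on edges of multiplicity $r\ge4$. Then a class with $c=q+1-v$, $g$ triple edges and $f$ heavier edges contributes at most $n\,C^{m}n^{-c-g/2-f}\le n\,C^m n^{-t/2}$, where $t=c+g+f$, and $a>0$ forces $f+g\ge1$.

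The counting then only needs polynomial control in $m$ for the $O(t)$ marked data: which edges are special, and the targets of first traversals of surplus edges. Every other non-innovative step chooses among $O(t+1)$ candidate edges at the current vertex. Part of $n^{-t/2}$ absorbs the $(Cm)^{O(t)}$ from the marked data. Because $m\le A\log n$, the rest satisfies $n^{-t/4}\le e^{-tm/(4A)}$, which turns $(t+1)^{2m}$ into at most $(CA)^{2m}$. This is the essential use of the logarithmic range, and the reason the constant in (A.1) must depend on $A$.
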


\begin{proof}
This is the bounded-entry, bounded-fourth-moment form of the upper-bound
moment enumeration in Bai--Yin \cite{bai-yin}; see also the proof of the
Bai--Yin power estimate and the truncation discussion in
\cite[Chapter~5]{bai-silverstein}.  The proof
of the quoted estimate uses only the following uniform inputs: centering,
independence, the bounds on the second and fourth moments, and the entrywise
bound.  Hence the same enumeration applies verbatim to the present triangular,
non-identically distributed array.  We use only this non-sharp form for the
truncation reduction.

We record how the quoted estimate matches the present triangular form.  In
the usual trace expansion of
\(\E\Tr((n^{-1}Z_nZ_n^*)^m)\), independence and centering remove every word in
which some matrix entry occurs only once.  For a contributing word, the
hypotheses give the uniform multiplicity bounds
\[
  \E |z_{ij}^{(n)}|^2\le \delta^2,\qquad
  \E |z_{ij}^{(n)}|^4\le K_4\delta^4,\qquad
  |z_{ij}^{(n)}|\le K\delta\sqrt n .
\]
These are exactly the inputs used in the bounded-entry Bai--Yin upper-bound
enumeration.  The enumeration is non-sharp: the tree-like exploration gives the
usual Catalan exponential factor, while the vertex-deficit and high-multiplicity
records are paid for by the powers of \(n^{-1}\) and by the fourth-moment
control.  More concretely, in the usual profile summation one first keeps the
edge multiplicities: edges of multiplicity two use the second-moment bound,
edges of multiplicity three are controlled by interpolation,
\[
  \E|z_{ij}^{(n)}|^3
  \le
  \bigl(\E|z_{ij}^{(n)}|^2\bigr)^{1/2}
  \bigl(\E|z_{ij}^{(n)}|^4\bigr)^{1/2}
  \le K_4^{1/2}\delta^3,
\]
and edges of multiplicity at least four use the fourth-moment bound before the
entrywise bound is applied.  The vertex deficit supplies the remaining powers
of \(n^{-1}\).  Only after these compensations are made does one sum the
profiles, whose number is bounded by \(C(A)^m\) for \(m\le A\log n\).  Thus the
upper-bound enumeration is uniform for \(1\le m\le A\log n\), and the remaining
polynomial factors in the word profiles are absorbed into a constant
\(C=C(A,K,K_4)\), independent of the particular triangular array.  The argument
uses only the uniform bounds above, not identical distribution of the entries.
This gives \((\mathrm{A}.1)\).
\end{proof}

\begin{lemma}[Nested small-variance Bai--Yin principle]
\label{fact:triangular-bai-yin}
Let \((x_{ij})_{i,j\ge1}\) be a single infinite array of i.i.d. copies of a
real- or complex-valued mean-zero random variable \(x\) with
\(\E|x|^4<\infty\).  Let \(f_n\) be measurable functions such that
\(|f_n(u)|\le |u|\).  Define
\[
  z_{ij}^{(n)}
  =
  f_n(x_{ij})-\E f_n(x),
  \qquad
  Z_n=(z_{ij}^{(n)})_{1\le i,j\le n}.
\]
If
\begin{equation} \label{eqn:conditions}
  \E |f_n(x)|^4\to0,
\end{equation}
then
\[
  \|n^{-1/2}Z_n\|\to0
  \qquad\text{a.s.}
\]
\end{lemma}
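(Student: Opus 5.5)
The plan is to apply Lemma~\ref{lem:bounded-fourth-moment-by} with a variance parameter \(\delta_n\to0\) and then use Markov's inequality and Borel--Cantelli, exactly as in the upper-bound proof of Theorem~\ref{thm:main}. The difficulty is that Lemma~\ref{lem:bounded-fourth-moment-by} needs an entrywise bound \(|z_{ij}^{(n)}|\le K\delta\sqrt n\), while here \(f_n(x_{ij})\) is only dominated by \(|x_{ij}|\). So we first split off the large entries.

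\emph{Step 1: splitting off large entries.} Since \(\E|x|^4<\infty\), there is a deterministic sequence \(\theta_n\downarrow0\) with \(\theta_n^{-4}\E|x|^4\one\{|x|>\theta_n\sqrt n\}\to0\). We can choose it so that also \(\sum_j 2^{2j}\Prob(|x|>\theta_{2^j}2^{j/2})<\infty\), which is the standard Bai--Yin choice. Write
\[
  f_n(x)=f_n(x)\one\{|x|\le\theta_n\sqrt n\}+f_n(x)\one\{|x|>\theta_n\sqrt n\}.
\]
By the usual dyadic-block argument (which uses that the array is nested and \(\E|x|^4<\infty\)), almost surely for all large \(n\) no entry \(x_{ij}\) with \(i,j\le n\) exceeds \(\theta_n\sqrt n\) in modulus. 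On this event the second piece contributes only its mean to \(Z_n\). That mean is at most
\[
  \E|x|\one\{|x|>\theta_n\sqrt n\}\le (\theta_n\sqrt n)^{-3}\E|x|^4=o(n^{-3/2}),
\]
so the resulting rank-one constant matrix has norm \(n\cdot o(n^{-3/2})=o(1)\) after the \(n^{-1/2}\) scaling. It therefore suffices to treat the centered truncated part
\[
  z'^{(n)}_{ij}=g_n(x_{ij})-\E g_n(x),\qquad g_n(u)=f_n(u)\one\{|u|\le\theta_n\sqrt n\}.
\]

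\emph{Step 2: choosing \(\delta_n\).} Set
\[
  \delta_n:=\max\bigl(\theta_n,(\E|f_n(x)|^4)^{1/4},n^{-1/8}\bigr)\to0 .
\]
Then \(\E|z'|^2\le 4(\E|f_n|^4)^{1/2}\le 4\delta_n^2\) by Lyapunov's inequality, and \(\E|z'|^4\le 16\E|f_n|^4\le16\delta_n^4\). Also \(|z'|\le 2\theta_n\sqrt n\le 2\delta_n\sqrt n\). So Lemma~\ref{lem:bounded-fourth-moment-by} applies with \(K=2\), \(K_4=16\) and \(\delta=2\delta_n\). Strictly, \(\delta\) there is fixed while here it varies with \(n\); I will read that lemma as uniform over \(\delta\in(0,1]\), since its constant \(C\) depends only on \(A,K,K_4\). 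Taking \(m=\lfloor A\log n\rfloor\), Markov's inequality gives, for any fixed \(\epsilon>0\),
\[
  \Prob\bigl(\|n^{-1/2}Z'_n\|>\epsilon\bigr)\le n\,(2C\delta_n/\epsilon)^{2m}.
\]
Since \(\delta_n\to0\), this is eventually at most \(n\,2^{-2m}\le n^{1-2A\log 2}\), which is summable once \(A\) is large. Borel--Cantelli, applied along a countable sequence \(\epsilon\downarrow0\), then gives \(\|n^{-1/2}Z'_n\|\to0\) almost surely.

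\emph{Main obstacle.} The delicate point is Step~1. The truncation level must make the large-entry event eventually empty almost surely for the nested array, which needs the fourth moment through the dyadic Borel--Cantelli argument. At the same time it must be fine enough that the entrywise bound is of order \(\delta_n\sqrt n\) with \(\delta_n\to0\). Taking \(\theta_n\to0\) slowly enough handles both requirements. A secondary point is to confirm that Lemma~\ref{lem:bounded-fourth-moment-by} is genuinely uniform in \(\delta\), which its constant \(C=C(A,K,K_4)\) asserts.
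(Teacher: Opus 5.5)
Your argument is correct. It has the same skeleton as the paper's proof: split off the large entries, show they are eventually absent, bound the rank-one centering, then apply Lemma~\ref{lem:bounded-fourth-moment-by} with Markov and Borel--Cantelli. Where you differ is in how the truncation level is handled.

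The paper fixes $\delta\in(0,1]$ and truncates at the fixed level $\delta\sqrt n$. It shows large entries are eventually absent by a plain Borel--Cantelli over the shells $\max(i,j)=r$ with threshold $\delta\sqrt r$. Since $\E|f_n(x)|^4\to0$, the centered truncated entries eventually satisfy the hypotheses of Lemma~\ref{lem:bounded-fourth-moment-by} with that same fixed $\delta$. This gives $\limsup_n\|n^{-1/2}Z_n^{(\delta)}\|\le 2C\delta$, and the proof ends by sending $\delta\downarrow0$ along rationals.

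You instead use a single vanishing level $\theta_n$ together with a vanishing $\delta_n$. This gets the convergence in one pass. The cost is the diagonal (Yin--Bai--Krishnaiah type) choice of $\theta_n$, which you flag as the main obstacle, and an $n$-dependent use of the bounded-entry lemma. The paper's fixed-level route shows that obstacle is avoidable: no diagonal construction is needed, and the lemma is used exactly as stated.

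Your concern about uniformity in $\delta$ also needs no reinterpretation of the lemma. Its hypotheses and conclusion are homogeneous under $Z\mapsto Z/\delta$. So apply it with $\delta=1$ and $K=K_4=1$ to the rescaled array $Z'_n/(2\delta_n)$.

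Two small slips.
\begin{enumerate}[label=(\roman*)]
\item The bound $(\theta_n\sqrt n)^{-3}\E|x|^4$ is not $o(n^{-3/2})$, since $\theta_n\to0$. You need $\E|x|^4\one\{|x|>\theta_n\sqrt n\}$ in its place, which your condition $\theta_n^{-4}\E|x|^4\one\{|x|>\theta_n\sqrt n\}\to0$ supplies.
\item The constant matrix with all entries $c_n$ has norm $n|c_n|$. After the $n^{-1/2}$ scaling this is $\sqrt n\,|c_n|=o(n^{-1})$, not $n\cdot o(n^{-3/2})$. The conclusion $o(1)$ is unaffected.
\end{enumerate}
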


\begin{proof}[Proof of Lemma~\ref{fact:triangular-bai-yin}]
Fix \(0<\delta\le1\), and split
\[
  f_n(x)
  =
  f_n(x)\one_{\{|x|\le \delta\sqrt n\}}
  +
  f_n(x)\one_{\{|x|>\delta\sqrt n\}} .
\]
The second part is eventually absent from the \(n\times n\) upper-left corner.
Indeed, by Borel--Cantelli on the shells \(\max(i,j)=r\),
\[
  \sum_{r\ge1} r\,\Prob\{|x|>\delta\sqrt r\}<\infty
\]
because \(\E|x|^4<\infty\): for example,
\[
  \sum_{r\ge1} r\,\one_{\{|x|>\delta\sqrt r\}}
  \le
  C_\delta |x|^4
\]
pointwise after integrating over \(x\).  Hence, almost surely, only finitely
many entries overall satisfy
\(|x_{ij}|>\delta\sqrt{\max(i,j)}\).  For an entry in the shell
\(r=\max(i,j)\le n\), we have
\(|x_{ij}|>\delta\sqrt n\Rightarrow |x_{ij}|>\delta\sqrt r\).  The finitely
many exceptional shell entries are
fixed finite random variables, so for all large \(n\) they also satisfy
\(|x_{ij}|\le\delta\sqrt n\).  This gives the \(n\)-dependent threshold and
hence
\[
  \limsup_{n\to\infty}
  \max_{1\le i,j\le n}|x_{ij}|/\sqrt n\le\delta
  \qquad\text{a.s.}
\]
The deterministic centering coming from the discarded
\(\{|x|>\delta\sqrt n\}\) part has rank one and norm
\[
  \sqrt n\,\bigl|\E[f_n(x)\one_{\{|x|>\delta\sqrt n\}}]\bigr|
  \le
  \frac{\E[|x|^4\one_{\{|x|>\delta\sqrt n\}}]}{\delta^3 n}
  =o(1).
\]
It is therefore enough to estimate the centered bounded array
\[
  z_{ij}^{(n,\delta)}
  =
  f_n(x_{ij})\one_{\{|x_{ij}|\le\delta\sqrt n\}}
  -
  \E[f_n(x)\one_{\{|x|\le\delta\sqrt n\}}].
\]
Its entries are independent, centered, bounded by \(2\delta\sqrt n\) for all
large \(n\), and have variance
\[
  \theta_{n,\delta}^2:=\E|z_{11}^{(n,\delta)}|^2
  \le
  4\bigl(\E|f_n(x)|^4\bigr)^{1/2}\to0 .
\]
For the same reason,
\[
  \E|z_{11}^{(n,\delta)}|^4
  \le
  C\,\E|f_n(x)|^4\to0,
\]
so, for this fixed \(\delta\), both
\(\E|z_{11}^{(n,\delta)}|^2\le\delta^2\) and
\(\E|z_{11}^{(n,\delta)}|^4\le\delta^4\) hold for all sufficiently large
\(n\).

If \(Z_n^{(\delta)}=(z_{ij}^{(n,\delta)})_{1\le i,j\le n}\), then
Lemma~\ref{lem:bounded-fourth-moment-by}, applied with \(K=2\), \(K_4=1\),
and \(\delta\) fixed, gives for every fixed \(A>0\), uniformly for
\(1\le m\le A\log n\),
\[
  \E\Tr\left(\left(n^{-1}Z_n^{(\delta)}
  (Z_n^{(\delta)})^*\right)^m\right)
  \le
  n(C\delta)^{2m}
  \qquad\text{for all sufficiently large }n,
\]
where \(C=C(A)\).  This is the only place where the non-sharp Bai--Yin
bounded-entry enumeration is used.

Choose \(m=\lfloor A\log n\rfloor\) with \(A\) large.  From this estimate and Markov's
inequality,
\[
  \sum_n
  \Prob\{\|n^{-1/2}Z_n^{(\delta)}\|>2C\delta\}<\infty .
\]
Hence
\[
  \limsup_{n\to\infty}\|n^{-1/2}Z_n^{(\delta)}\|\le 2C\delta
  \qquad\text{a.s.}
\]
The same full-probability event can be chosen for all rational
\(\delta>0\).  To pass back to the original centered array, write
\[
  z_{ij}^{(n)}
  =
  z_{ij}^{(n,\delta)}
  +
  f_n(x_{ij})\one_{\{|x_{ij}|>\delta\sqrt n\}}
  -
  \E\bigl[f_n(x)\one_{\{|x|>\delta\sqrt n\}}\bigr].
\]
The random matrix with entries
\(f_n(x_{ij})\one_{\{|x_{ij}|>\delta\sqrt n\}}\) is eventually zero in the
\(n\times n\) corner, and its centering term is rank one with norm \(o(1)\), as
shown above.  Letting \(\delta\downarrow0\) gives the desired convergence to
zero.
\end{proof}

\begin{lemma}[Small-variance tail estimate]
\label{lem:small-variance-bai-yin}
Let \(x\) be a real- or complex-valued mean-zero random variable with
\(\E |x|^4<\infty\).  Let
\(a_n\to\infty\), and let
\((x_{ij})_{i,j\ge1}\) be a single infinite array of i.i.d. copies of \(x\).
Define the centered tail array \(Y_n= (y_{ij}^{(n)})_{1\le i,j\le n}\) by
\[
  y_{ij}^{(n)}
  =
  x_{ij}\one_{\{|x_{ij}|>a_n\}}
  -\E\bigl[x\one_{\{|x|>a_n\}}\bigr].
\]
If
\begin{equation} \label{eqn:conditions2}
  \E\bigl[|x|^4\one_{\{|x|>a_n\}}\bigr]\to0,
\end{equation}
then
\[
  \|n^{-1/2}Y_n\|=o(1)
  \qquad \text{a.s.}
\]
\end{lemma}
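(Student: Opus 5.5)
The plan is to reduce the statement to Lemma~\ref{fact:triangular-bai-yin} by choosing \(f_n(u)=u\one_{\{|u|>a_n\}}\).

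\emph{Step 1: the reduction.} This choice is measurable and satisfies \(|f_n(u)|\le|u|\). The centered array \(z_{ij}^{(n)}=f_n(x_{ij})-\E f_n(x)\) is then exactly \(y_{ij}^{(n)}\), so \(Z_n=Y_n\). Moreover, \(x\) is mean zero with \(\E|x|^4<\infty\), which is the standing hypothesis of Lemma~\ref{fact:triangular-bai-yin}.

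\emph{Step 2: the only condition to check.} Condition \eqref{eqn:conditions} reads \(\E|f_n(x)|^4\to0\). Here
\[
  \E|f_n(x)|^4=\E\bigl[|x|^4\one_{\{|x|>a_n\}}\bigr],
\]
which tends to zero precisely by the assumption \eqref{eqn:conditions2}. The same conclusion also follows from dominated convergence, since \(a_n\to\infty\) and \(|x|^4\) is integrable. Lemma~\ref{fact:triangular-bai-yin} then yields \(\|n^{-1/2}Y_n\|\to0\) almost surely.

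\emph{Step 3: the nested coupling.} I will also make explicit that Lemma~\ref{fact:triangular-bai-yin} is stated for a single infinite array with \(n\)-dependent functions \(f_n\). This is exactly the present situation, because the threshold \(a_n\) varies with \(n\) while the underlying array \((x_{ij})\) stays fixed. Hence no additional almost-sure interchange argument is needed.

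\emph{Expected obstacle.} There is essentially none: the lemma is a direct corollary. The only point requiring care is confirming that Lemma~\ref{fact:triangular-bai-yin} allows arbitrary measurable \(f_n\) dominated by \(|u|\), including discontinuous indicators. I will not assume any rate on \(a_n\), since none is needed.
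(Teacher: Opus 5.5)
Your proposal is correct and coincides with the paper's proof: both apply Lemma~\ref{fact:triangular-bai-yin} with \(f_n(u)=u\one_{\{|u|>a_n\}}\), for which \(Z_n=Y_n\) and \(\E|f_n(x)|^4=\E[|x|^4\one_{\{|x|>a_n\}}]\to0\). Your side remark that this hypothesis already follows by dominated convergence from \(a_n\to\infty\) and \(\E|x|^4<\infty\) is also right, and the paper's additional check that \(\tau_n^2\to0\) is not needed for the conclusion.
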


\begin{proof}[Proof of Lemma~\ref{lem:small-variance-bai-yin}]
Apply Lemma~\ref{fact:triangular-bai-yin} with
\[
  f_n(u)=u\one_{\{|u|>a_n\}}.
\]
Its hypothesis holds because
\[
  \E |f_n(x)|^4
  =
  \E\bigl[|x|^4\one_{\{|x|>a_n\}}\bigr]\to0.
\]
For \(y_{11}^{(n)}=x\one_{\{|x|>a_n\}}-\E[x\one_{\{|x|>a_n\}}]\), put
\[
  \tau_n^2:=\E|y_{11}^{(n)}|^2 .
\]
This variance parameter also tends to zero:
\[
  \tau_n^2
  \le
  \E\bigl[|x|^2\one_{\{|x|>a_n\}}\bigr]
  +
  \left|\E[x\one_{\{|x|>a_n\}}]\right|^2
  \to0 .
\]
This is the claimed convergence.
\end{proof}

\begin{proof}[Proof of Lemma~\ref{lem:truncation}]
Choose, for example, \(a_n=n^{1/4}\), and write
\(\eps_n^{(0)}=a_n/\sqrt n=n^{-1/4}\).  Then \(a_n\to\infty\),
\(\eps_n^{(0)}(\log n)^B\to0\) for every fixed \(B\), and finite fourth moment
gives
\[
  \E\bigl[|w_{11}|^4\one_{\{|w_{11}|>a_n\}}\bigr]\to0,
  \qquad
  \E\bigl[|w_{11}|^2\one_{\{|w_{11}|>a_n\}}\bigr]\to0 .
\]
Set
\[
  \widehat w_{ij}
  :=
  w_{ij}\one_{\{|w_{ij}|\le a_n\}}
  -\E\bigl[w_{11}\one_{\{|w_{11}|\le a_n\}}\bigr],
\]
and let \(\widehat\sigma_n^2=\E|\widehat w_{11}|^2\).  Then
\(\widehat\sigma_n\to1\), and we put
\(\wt w_{ij}=\widehat\sigma_n^{-1}\widehat w_{ij}\).  After replacing
\(\eps_n^{(0)}\) by \(\eps_n=3\eps_n^{(0)}\), which still satisfies
\(\eps_n(\log n)^B\to0\) for every fixed \(B\), we have
\(|\wt w_{ij}|\le\eps_n\sqrt n\) for all large \(n\).  Indeed,
\(|\E[w_{11}\one_{\{|w_{11}|>a_n\}}]|\to0\), so the deterministic centering is
eventually smaller than \(a_n\).  Here hats denote the centered truncation and
tildes denote the subsequent variance normalization.

The discarded tail is controlled as follows.  Since \(\E w_{11}=0\),
\[
  \E[w_{11}\one_{\{|w_{11}|\le a_n\}}]
  =
  -\E[w_{11}\one_{\{|w_{11}|>a_n\}}],
\]
and hence \(W_n-\widehat W_n\) is exactly the centered tail matrix:
\[
  W_n-\widehat W_n
  =
  \left(
  w_{ij}\one_{\{|w_{ij}|>a_n\}}
  -\E w_{11}\one_{\{|w_{11}|>a_n\}}
  \right)_{ij}.
\]
Denote the variance of these centered tail entries by \(\tau_n^2\).  Then
\[
  \tau_n^2
  \le
  \E\bigl[|w_{11}|^2\one_{\{|w_{11}|>a_n\}}\bigr]+o(1)\to0.
\]
The centered tail entries also satisfy the Bai--Yin fourth-moment truncation
condition because
\[
  \E\left|
    w_{11}\one_{\{|w_{11}|>a_n\}}
    -\E w_{11}\one_{\{|w_{11}|>a_n\}}
  \right|^4
  \le
  C\,\E[|w_{11}|^4\one_{\{|w_{11}|>a_n\}}]\to0 .
\]
Hence
Lemma~\ref{lem:small-variance-bai-yin} gives
\[
  \left\|n^{-1/2}(W_n-\widehat W_n)\right\|\to0
  \qquad\text{a.s.}
\]
The centered truncated entries have \(\widehat\sigma_n^2\to1\), uniformly
bounded fourth moments, and satisfy
\(|\widehat w_{ij}|\le 2\sqrt n\) for all large \(n\).  A fixed rescaling puts
them under Lemma~\ref{lem:bounded-fourth-moment-by} with
\(\delta=1\).  The same Markov--Borel--Cantelli argument as above therefore
gives \(\|n^{-1/2}\widehat W_n\|=O(1)\) almost surely.  Consequently
\[
  \|n^{-1/2}(\wt W_n-\widehat W_n)\|
  =
  |\widehat\sigma_n^{-1}-1|\,\|n^{-1/2}\widehat W_n\|
  =o(1).
\]
Combining the last two displays proves the lemma.
\end{proof}

\section{Proofs for the Word Enumeration and Defect Counting}
\label{app:combinatorics}

\begin{proof}[Proof of Lemma~\ref{lem:leading}]
If \(\mathbf i\) is balanced tree-like, then \(q=L\) and \(v=L+1\), so the
quotient graph traced by the word is a tree with \(L\) edges.  The closed
contour of a tree crosses each edge exactly twice, once in each direction;
equivalently, every ordered variable-edge is represented by one plus
occurrence and one minus occurrence.  Since a tree embedded with one boundary
contour is planar, the two visits to each edge give a non-crossing matching of
the \(L\) plus occurrences with the \(L\) minus occurrences in
\[
  (z^kz^{*k})^m .
\]
Conversely, a compatible planar non-crossing matching of plus and minus
occurrences glues the contour into a rooted plane tree, and reading its
contour recovers a unique balanced tree-like word up to relabeling of
vertices.  Thus balanced tree-like words are precisely these planar
non-crossing pairings.

Put a root at the first occurrence and follow the contour.  The first-return
decomposition of a compatible non-crossing matching gives the recursion:
the first block \(z^kz^{*k}\) has \(k\) plus occurrences, and planarity forces
their matching minus occurrences to close in last-in-first-out order.  The
intervals of the boundary lying before the first return, between consecutive
returns, and after the last return are therefore \(k+1\) ordered subwords,
each again carrying a compatible non-crossing matching.  Each piece is read
with the natural root and orientation inherited from its matched boundary arc;
with this convention it is again of the same block-compatible type and is
counted by the same generating function \(M(t)\).  Thus no variable-length
partial-block datum is introduced in the leading decomposition.  This gives a
formal decomposition
\[
  \text{root block}\quad\longleftrightarrow\quad
  (T_0,T_1,\ldots,T_k),
\]
where \(T_j\) is the rooted object inside the \(j\)-th region cut out by the
last-in-first-out returns, and the numbers of complete \(z^kz^{*k}\)-blocks in
the \(T_j\)'s sum to \(m-1\).  Hence the pieces are block-compatible objects
of the same class, possibly empty, not new partial-block species.  This gives a
bijection between balanced tree-like words and rooted ordered
\((k+1)\)-ary trees with \(m\) internal vertices.  Let \(M_m\) denote their
number and set
\[
  M(t)=1+\sum_{m\ge1}M_m t^m .
\]
Under this bijection, deleting the root internal vertex leaves \(k+1\) ordered
tree-like subwords, possibly empty, whose numbers of internal vertices sum to
\(m-1\).  Conversely, any such ordered \((k+1)\)-tuple is inserted into the
\(k+1\) slots of the root and reconstructs a unique word.  Therefore
\[
  M(t)=1+tM(t)^{k+1}.
\]
Lagrange inversion yields
\[
  [t^m]M(t)
  =
  \frac1{km+1}\binom{(k+1)m}{m}.
\]
This proves the claim.
\end{proof}

The entropy bound
\[
  \binom{N}{pN}
  \le p^{-pN}(1-p)^{-(1-p)N}
\]
with \(N=(k+1)m\) and \(p=1/(k+1)\) gives
\(\binom{(k+1)m}{m}\le \gamma_k^{2m}\), and hence \eqref{eq:F-growth}.

We also record the promised verification of the zero-defect case.  If
\(d(\mathbf i)=0\), then \(q=L\) and \(v=L+1\), so the connected graph traced
by the word is a tree and every ordered variable-edge occurs exactly twice.  A
closed contour on a tree crosses each cut equally often in the two directions;
hence the two occurrences of every edge have opposite signs.  This is exactly
the balanced tree-like condition.

\begin{definition}[Decorated leading envelopes]
A decorated leading envelope is a counting object built from a leading
tree-like contour counted by the generating function \(M(t)\).  It consists of
that canonical leading contour together with:
\begin{enumerate}[label=\textup{(\roman*)},leftmargin=2em]
\item one or two distinguished boundary corners, depending on whether the
  envelope is rooted or two-rooted;
\item finitely many additional marked boundary corners;
\item for each retained mark, a finite boundary type recording its sign and its
  offset inside a \(z^k\) or \(z^{*k}\) block;
\item the numbers of complete \(z^kz^{*k}\)-blocks in the intervals between
  consecutive retained marks.
\end{enumerate}
The signs in the envelope are the canonical block-compatible leading signs.
They are not the signs of the original scheme-edge expansion.  Thus an envelope
is used only to count a plane tree shape with marked boundary data; the original
same-sign or opposite-sign information is recovered later after the envelope is
placed back into the fixed word \((z^kz^{*k})^m\).
\end{definition}

\begin{definition}[Fixed scheme records]
In the sequel a fixed scheme record means the following finite data: the rooted
abstract one-face multigraph obtained after pruning and suppressing degree-two
paths; the pairing of its half-edges into scheme edges; the cyclic order at
each scheme vertex, equivalently the rooted boundary tour; the finite
endpoint/side convention attached to each scheme edge; and the \(O(p)\)
attachment positions, together with their finite cut offsets, in the fixed word
\((z^kz^{*k})^m\).  Loops and multiple edges are allowed.  This record does not
contain the occurrence labels along a suppressed degree-two chain, nor a
same-sign/opposite-sign sequence along that chain.
\end{definition}

The next lemmas give an injective encoding.  In every use below the retained
data are only: the finite
scheme, the \(O(p)\) attachment positions and cut offsets, the finite endpoint
convention for each scheme edge, and the leading envelopes with their marked
corners.  No list of labels along a long degree-two chain is retained.  Once the
marked arcs are placed back into the fixed cyclic word, the missing labels are
read in cyclic order, and the same-sign or opposite-sign type of each restored
pair is then determined by those labels.  Thus the polynomial factor
\((Cm)^{O(p)}\) comes only from scheme data and marked positions; there is no
hidden factor exponential in the lengths of the suppressed chains.

\begin{lemma}[Edge-expansion normal form]
\label{lem:edge-expansion-normal-form}
Fix the abstract rooted scheme obtained from a pair skeleton after pruning
trees and suppressing degree-two paths.  For each scheme edge, its preimage in
the original skeleton is encoded by the following data:
\begin{enumerate}[label=\textup{(\roman*)},leftmargin=2em]
\item two root half-edges, one at each endpoint of the scheme edge;
\item two boundary strings \(I=(i_1,\ldots,i_\ell)\) and
  \(J=(j_1,\ldots,j_\ell)\) of occurrence positions along the face tour;
\item rooted plane branches attached in the gaps along these two strings;
\item a finite endpoint convention \(\theta\) saying whether \(i_r\) is paired
  with \(j_r\) or with \(j_{\ell+1-r}\), together with the two possible
  choices of which side is read first.
\end{enumerate}
The path pairs of the expanded scheme edge are then exactly
\[
  \{i_r,j_{\theta(r)}\},\qquad 1\le r\le \ell,
\]
with \(\ell\ge1\).  Empty gaps between consecutive retained marks are allowed;
they simply carry zero complete \(z^kz^{*k}\)-blocks.  The attached branches
are the tree components removed during pruning.
The same normal form applies to loop scheme edges: the two root half-edges may
be incident to the same scheme vertex, but cutting at them still separates the
loop expansion into the two boundary strings \(I\) and \(J\).  For a loop, the
first side is declared by the rooted boundary tour, and the remaining ambiguity
is one of the finite endpoint conventions included in \(\theta\).  Thus
\(\theta\) ranges over a fixed finite set of side-choice, orientation, and
loop annulus/M\"obius conventions; its size is independent of the length
\(\ell\).  Conversely, these data reconstruct the expansion of that scheme
edge.
\end{lemma}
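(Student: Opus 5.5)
We need to prove the edge-expansion normal form: for a fixed scheme edge, its preimage in the skeleton is a maximal path of degree-two vertices, with tree branches attached, and it is encoded by the data (i)--(iv). Conversely, these data reconstruct the expansion.

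\emph{Step 1: describe the preimage.} Recall how the scheme $K(\mathcal P)$ arises from $G(\mathcal P)$. Pruning removes pendant trees. The two-core is then a graph of minimum degree at least two, and a scheme edge corresponds to a maximal path $x_0,x_1,\ldots,x_\ell$ in this core whose interior vertices have core-degree exactly two; $x_0$ and $x_\ell$ are scheme vertices, possibly equal, which gives the loop case. Each core edge $x_{r-1}x_r$ is the quotient of exactly one pair $\{i,j\}\in\mathcal P$. The two root half-edges in (i) are the half-edges of $x_0x_1$ at $x_0$ and of $x_{\ell-1}x_\ell$ at $x_\ell$.

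\emph{Step 2: split the pairs into the strings $I$ and $J$.} The quotient has one face, so the boundary tour $\gamma$ traverses each core edge exactly twice, once for each occurrence in its pair. Consider a regular neighbourhood of the path; it is a band. Since interior vertices have core-degree two, everything else at $x_r$ (for $0<r<\ell$) is a pruned tree. The face tour therefore enters the band at a root half-edge and runs along one side, detouring into the attached trees and returning to the same corner side. It thus traverses $x_0x_1,\ldots,x_{\ell-1}x_\ell$ consecutively, and this first traversal produces $I$. The other side produces $J$, traversed either in the same direction (annulus) or in the reverse direction. Hence the second occurrence of the $r$-th edge is $j_r$ or $j_{\ell+1-r}$, uniformly in $r$. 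This uniformity is the key point: the twist type of a single band is a global property of the band, not something to be decided edge by edge. The branches hanging in each gap of $I$ and $J$ are exactly the tree excursions of the tour between consecutive path occurrences, which gives (iii).

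\emph{Step 3: check the finiteness of $\theta$.} The data in $\theta$ are the choice of which side is read first, the orientation, and, for a loop, whether the band is an annulus or a M\"obius band. This is a bounded set, independent of $\ell$. For a loop, the two strings are still separated by cutting at the two root half-edges at the common vertex. The first side is declared by the rooted tour, and the remaining twofold ambiguity is absorbed into $\theta$.

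\emph{Step 4: reconstruction.} Given the root half-edges, $I$, $J$, $\theta$ and the branches, the pairs $\{i_r,j_{\theta(r)}\}$ glue the sides to form the path, and the branches restore the pruned trees. This recovers exactly the preimage of the scheme edge, so the map is injective.

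The main obstacle is Step 2 in the non-orientable or loop case. There one must rule out the tour switching sides partway along the path, so that the pairing rule really is monotone. I would handle this by induction along the path. At an interior vertex $x_r$ of core-degree two, the corners split into the two sides of the path, and the tree excursions return each side to itself. Consequently the tour cannot cross from one side to the other until it reaches an endpoint. This forces the consecutive structure of $I$ and $J$ and the uniform rule $j_r$ versus $j_{\ell+1-r}$.
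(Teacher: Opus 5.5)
Your proposal is correct and follows essentially the same route as the paper's proof: the preimage of a scheme edge in the two-core is a single path of degree-two vertices with pruned trees attached, the one face runs along it in exactly two passes giving $I$ and $J$ under one global forward/reverse convention, and the converse gluing is immediate. Your local argument at degree-two vertices (tree excursions return the tour to the same side, so it cannot switch sides mid-chain) only makes explicit what the paper asserts through the two boundary arcs of the regular neighbourhood. One small slip: for a loop, two passes in the same direction are the M\"obius (same-sign) case, not the annulus, although this label plays no role in your argument.
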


\begin{proof}
After all degree-one branches have been removed, every internal vertex on the
preimage of a scheme edge has degree two in the remaining two-core.  Therefore
the non-branch part between the two scheme endpoints is a single path.  The
one-face boundary runs along the two sides of this path, producing two ordered
strings of side labels.  At the unsigned path level, pairing the two sides has
only the forward/reverse and side-choice alternatives listed in \(\theta\).
This is one global convention for the whole degree-two chain, not an
independent choice at each internal edge.
Equivalently, the regular neighbourhood of a degree-two chain whose core is an
interval has two boundary arcs; once the endpoint sides are fixed, the opposite
side of the \(r\)-th edge is determined up to one global reversal.  In the
single-cycle exception the neighbourhood is an annulus or a M\"obius band, and
this global ambiguity is one of the same finite endpoint/side conventions.
The signed same/opposite status of each path pair is not recorded here; it is
recovered later from the fixed occurrence labels in
Lemma~\ref{lem:edge-expansion-injection}.  All other components incident to
vertices of this path are rooted trees, because any
cycle in such a component would have survived in the two-core and would belong
to the scheme.  This proves the normal form, and the converse gluing is
immediate from the same data.
\end{proof}

\begin{lemma}[Pruned components are leading branches]
\label{lem:pruned-leading-branch}
Every tree component removed during the pruning step is, after rooting at its
attachment corner, a leading tree-like branch, up to the finite boundary offset
marks produced by cuts inside \(z^k\) or \(z^{*k}\).
\end{lemma}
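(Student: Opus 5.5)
The plan is to argue directly from the pruning construction of the scheme $K(\mathcal P)$ and from the zero-defect characterization in the proof of Lemma~\ref{lem:leading}.

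\emph{Step 1: the removed component is a bridge tree whose contour is a contiguous arc.} Let $T$ be a component removed by the recursive pruning of degree-one branches in $G(\mathcal P)$. By construction $T$ is a tree attached to the remaining graph at a single vertex, and every edge of $T$ is a bridge of $G(\mathcal P)$. The one-face quotient has a single boundary tour. I will show that this tour, entering $T$ at the attachment vertex, traverses the whole contour of $T$ as one contiguous arc of occurrence positions before leaving. The reason is that a face tour can only leave a pendant subtree through its attachment vertex. Because each edge of $T$ is a bridge, the tour crosses it exactly twice, once on each side of the regular neighbourhood, and these two crossings are the two occurrences glued by the corresponding pair of $\mathcal P$.

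\emph{Step 2: each pair inside $T$ has opposite signs.} For each pair $\{r,s\}$ inside $T$, I will argue that $r$ and $s$ carry opposite signs. A twisted (same-sign) band on a bridge is incompatible with tail-to-tail and head-to-head gluing when the contour passes the edge once in each direction. Concretely, this follows the final part of the zero-defect verification: a closed contour on a tree crosses each cut equally often in both directions, and the orientation convention $(t(s),h(s))$ converts direction of traversal into sign. This gives the most delicate local check. It has to be done carefully for plus and minus occurrences using the explicit formulas for $t(s)$ and $h(s)$, and I expect it to be the main obstacle, because the signed one-face graph is not assumed orientable globally. The key point is that non-orientability lives only in the two-core, and a regular neighbourhood of a tree is a disc.

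\emph{Step 3: identify the branch as a leading tree-like word.} Cut the cyclic word $(z^kz^{*k})^m$ at the two ends of the contour arc of $T$. The arc then carries a non-crossing, opposite-sign pairing with $v=q+1$, which is planar since $T$ is a plane tree read along its contour. By the converse direction in the proof of Lemma~\ref{lem:leading}, such a pairing is a balanced tree-like word, except possibly at its ends. If a cut falls inside a block $z^k$ or $z^{*k}$, the arc begins or ends with a partial block. Recording the sign and the offset in $\{0,\ldots,k-1\}$ at each end gives the finite boundary offset marks of the statement. Inside the arc, the level-preservation mechanism of Lemma~\ref{lem:level-preservation} (LIFO closing of each block's $k$ plus occurrences) forces complete blocks to be matched as in the first-return decomposition. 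Hence, after removing the $O(1)$ boundary-partial data, the branch is rooted at its attachment corner and counted by the same generating function $M(t)$. This is the claim.
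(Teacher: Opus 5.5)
Your proposal is correct and takes essentially the paper's route: each edge of a pruned component is a bridge and a single pair, so the boundary tour crosses it once in each direction, the $(t(s),h(s))$ convention turns this into opposite signs, and the planar tree contour with finite offset marks at partial blocks gives a leading branch (your contiguity and level-preservation remarks just spell out what the paper leaves implicit). One small correction: argue contiguity of the contour arc for each branch hanging from a single attaching bridge, as the paper does, because that bridge is crossed exactly twice; a pendant tree meeting the core vertex through several edges in different sectors is generally visited in several separate arcs, so ``the tour can only leave through the attachment vertex'' does not by itself give one contiguous arc.
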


\begin{proof}
Such a component is attached to the remaining two-core by a bridge.  Removing
any edge inside the component also separates the quotient graph into two
pieces.  Since the original object has one boundary tour, that tour crosses
the corresponding cut exactly twice, once in each direction.  Here ``opposite
directions'' refers to the traversal direction of the underlying graph edge,
not to an orientability convention for the glued surface.  With the
definition of ordered variable-edges used in Section~3.1, the two occurrences of
the bridge therefore have opposite signs.  Applying this argument to every
edge of the tree component shows that all its pairs are plus/minus balanced.
The rooted contour of a tree is planar, so the component is a rooted leading
tree-like branch.  If the attachment cut lies inside a \(z^k\) or \(z^{*k}\)
block, the offset is one of the finite boundary marks already recorded in the
scheme expansion.
\end{proof}

\begin{lemma}[Injective normalization of a scheme-edge expansion]
\label{lem:edge-expansion-injection}
For a fixed scheme edge, fixed endpoint occurrence positions, fixed finite
cut-offset data, and fixed endpoint convention \(\theta\), the expansions in
Lemma~\ref{lem:edge-expansion-normal-form} inject into decorated two-rooted
leading envelopes with a bounded number of marked corners, together with the
same endpoint convention.
\end{lemma}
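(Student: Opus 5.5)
The plan is to build the injection explicitly from the normal form of Lemma~\ref{lem:edge-expansion-normal-form}. We then check that it is injective by exhibiting a decoding map. Fix the scheme edge, its endpoint occurrence positions, the cut offsets and the convention \(\theta\). An expansion is then the data \((I,J,\text{branches})\) with path pairs \(\{i_r,j_{\theta(r)}\}\).

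\textbf{Step 1: the map.} We \emph{forget the signs} along the chain and regard the chain together with its attached branches as an unsigned plane object. We read the boundary string \(I\) and then \(J\) (in the order prescribed by \(\theta\)) as the two sides of a single contour. The path edges are glued as a spine, and each pruned branch is inserted at its gap. By Lemma~\ref{lem:pruned-leading-branch}, each inserted branch is already a rooted leading tree-like branch. The spine glued along its two sides is a path, hence a plane tree. So the whole contour is the contour of a plane tree, and we equip it with the canonical block-compatible leading signs. The two distinguished corners are the two root half-edges at the scheme endpoints. The boundedly many marked corners are the cut positions at the ends of \(I\) and \(J\) and at the finite offset cuts, each decorated with its recorded offset type. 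The block counts between consecutive marks are recorded as in the definition of decorated leading envelopes. We also carry \(\theta\) along.

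\textbf{Step 2: decoding.} Given the envelope and \(\theta\), we place the marked arcs back into the fixed cyclic word \((z^kz^{*k})^m\). This is possible because the endpoint occurrence positions and offsets are fixed, and the block counts between marks determine the lengths of all arcs. Hence the occurrence positions of \(I\), \(J\) and every branch are read off in cyclic order. The spine edges are recovered as the two-rooted path in the plane tree between the distinguished corners, and \(\theta\) then recovers the pairs \(\{i_r,j_{\theta(r)}\}\). The true same-sign or opposite-sign status of each pair is read from the occurrence labels, as stated in the fixed scheme record convention. So it is never stored, which is exactly why no \(2^\ell\) factor appears. Since the original expansion is determined by its pairs, the decoding is a left inverse, and injectivity follows.

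\textbf{Main obstacle.} The delicate point is Step 1's claim that the canonical leading signs, together with the marks, pin down arc lengths compatibly with the real word. The real chain may contain same-sign (twisted) pairs, so its contour need not be block-compatible, and the envelope's block counts could disagree with the true positions. I would handle this by letting the marks, rather than the canonical signs, carry all positional information. Between two consecutive marks, each side of the spine, and each branch, is a contiguous arc of the fixed word. Its length is therefore a number of complete blocks plus a bounded offset, and this is exactly what is recorded. I would first verify on the case with no branches, a pure chain, that the number of marks needed is \(O(1)\) independent of \(\ell\): four ends plus the offset cuts. I would then argue that inserting branches at gaps creates no new marks, because branches are leading and block-compatible by Lemma~\ref{lem:pruned-leading-branch}.
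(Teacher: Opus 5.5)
Your encoding and decoding follow the paper's construction. The decoding half is fine: the plane shape, the two root corners, $\theta$ and the true arc lengths between the $O(1)$ marks do determine the expansion.

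The step that fails is ``we equip it with the canonical block-compatible leading signs.'' For $k\ge2$, a plane tree need not admit \emph{any} labelling by a window of $(+^k-^k)^\infty$ in which every edge is a $(+,-)$ pair. The tree you assemble from a chain expansion can be such a tree, even when the chain is untwisted.

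Take $k=2$, $m=3$ (signs $++--++--++--$) and the pairing $\{2,11\},\{7,10\},\{3,6\},\{4,5\},\{1,8\},\{9,12\}$. Every pair is opposite-signed, $v=5$ and $\chi=2$. The scheme is one vertex with two loops. One loop is the chain $I=(2,7)$, $J=(10,11)$ with the reversed convention, carrying the pendant path $\{3,6\},\{4,5\}$ in the gap of $I$.

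The assembled tree is the spider with legs $1,1,2$, whose contour pairs are $(1,8),(2,5),(3,4),(6,7)$. None of the four length-$8$ windows of $(++--)^\infty$ makes all four pairs opposite-signed. Equivalently, the leading words of $(z^2z^{*2})^2$ have underlying trees $P_5$ and $K_{1,4}$ only.

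What breaks here is the offset jump between the end of $I$ and the start of $J$: a gap of length $2\not\equiv0\pmod 4$. Same-sign pairs are not the problem. In fact, on a chain with $\ell\ge2$ the face tour crosses each internal vertex from one core edge to the other. So the pairs are all same-sign or all opposite-sign according to $\theta$, and no twist sequence ever arises.

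Your proposed repair, letting the marks rather than canonical signs carry the positions, preserves injectivity. But it gives up the claim that the image consists of leading envelopes. The object you record is $(+,-)$-compatible only for a piecewise labelling with offset jumps at the marks. When $\theta$ is the same-direction convention, the $J$-side must also be reversed and sign-complemented.

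Such objects are not contours counted by $M(t)=1+tM(t)^{k+1}$, so Lemma~\ref{lem:coefficient-comparison} does not apply to them as stated. You would need a separate argument that trees which are leading for a piecewise block-compatible labelling with $O(p)$ jumps are still counted within $F_{k,m}(Cm)^{O(p)}$. That bound is the real content, not bookkeeping.

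The paper's own proof makes the same unqualified assertion (``the same plane tree shape is given the canonical block-compatible leading boundary labelling''). So this gap is inherited rather than introduced by you, but the construction as written does not prove the statement.
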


\begin{proof}
We define the normalization map explicitly.  Given an expanded scheme edge
\({\mathcal E}\), first cut it at its two root half-edges.  This produces two
boundary strings \(I=(i_1,\ldots,i_\ell)\) and
\(J=(j_1,\ldots,j_\ell)\), rooted branches attached in the gaps, and the
endpoint convention \(\theta\) from
Lemma~\ref{lem:edge-expansion-normal-form}.  The map
\[
  {\mathcal N}_\theta:{\mathcal E}\longmapsto {\mathcal T}
\]
keeps the following data:
\begin{enumerate}[label=\textup{(\alph*)},leftmargin=2em]
\item the plane rooted shape obtained from the two boundary strings and all
  attached branches;
\item the distinguished path with its two root corners; only the finitely many
  endpoint and cut offsets are recorded as additional marked corners;
\item the numbers of complete \(z^kz^{*k}\)-blocks placed in each interval
  between consecutive retained boundary marks.
\end{enumerate}
Equivalently, the record attached to this scheme edge is
\[
  {\mathcal R}_{\mathcal E}
  =
  \bigl({\mathcal T},\theta,\text{ endpoint cuts},
  \text{ cut offsets},\text{ interval block counts}\bigr).
\]
We denote this record by
\[
  \operatorname{Enc}_\theta({\mathcal E})={\mathcal R}_{\mathcal E}.
\]
It forgets only whether a path pair is same-sign or opposite-sign.  After this
forgetting, the distinguished path is interpreted with the canonical
opposite-sign convention, and the same plane tree shape is given the canonical
block-compatible leading boundary labelling determined by the recorded complete
block counts and boundary offsets.  Thus \({\mathcal T}\) is a decorated
two-rooted leading envelope.  This is only a counting envelope; it is not
asserted that the original scheme-edge expansion has become a leading
sub-skeleton.  The original signed chain is recovered only after the envelope is
placed back into the fixed word \((z^kz^{*k})^m\) with the recorded endpoint
offsets and block counts.

The inverse decoder on the image is as follows.  Given a record
\({\mathcal R}_{\mathcal E}\), define
\(\operatorname{Dec}_\theta({\mathcal R}_{\mathcal E})\) as follows.  First
place the two endpoint cuts and the finitely many cut offsets back in the fixed
word \((z^kz^{*k})^m\).  These are \(O(1)\) labels for this scheme edge, not
the full list of labels along the degree-two chain.  Then read every unmarked
interval in the original cyclic order.  The template \({\mathcal T}\) tells how
many complete blocks and which nested rooted branches occupy each interval, so
the labels in \(I\) and \(J\) are restored in order.  Finally pair the restored
entries by
\[
  i_r\longleftrightarrow j_{\theta(r)},\qquad 1\le r\le \ell,
\]
and reattach the rooted branches in their recorded attachment gaps.  The sign
type of this pair is then read from the fixed signs of the two restored
occurrence labels:
opposite signs give an untwisted edge, while equal signs give a twisted edge.
Equivalently, the reconstruction checklist is
\[
  ({\mathcal T},\theta,\text{ endpoint offsets},\text{ marked block counts})
  \Longrightarrow (I,J)
  \Longrightarrow
  \{\,i_r\leftrightarrow j_{\theta(r)}:1\le r\le\ell\,\}.
\]
In particular, no \(2^\ell\)-type choice is introduced along a degree-two
chain.  The same-sign or opposite-sign type of each restored pair is determined
by the fixed signs of the restored occurrence positions.  Thus no
variable-length twist sequence is stored as extra data; it is recovered edge by
edge from the restored labels.  The construction gives
\[
  \operatorname{Dec}_\theta(\operatorname{Enc}_\theta({\mathcal E}))
  =
  {\mathcal E},
\]
which proves injectivity.
\end{proof}

\begin{remark}[Block-compatible domination]
The envelope used in Lemma~\ref{lem:edge-expansion-injection} is not an
arbitrary plane-tree encoding.  Once the \(O(p)\) scheme attachment cuts and
their offsets are fixed, every unmarked interval is read in the original cyclic
order of the word \((z^kz^{*k})^m\).  Thus the only free data in an edge
envelope are the leading branch shapes, the two distinguished roots, the
finitely many endpoint/cut offsets, and the block counts of the resulting
intervals.  Consequently, when all scheme edges and vertex sectors are listed
in the boundary order, the resulting objects form an ordered forest of
block-compatible leading envelopes with \(O(p)\) roots and \(O(p)\) marked
corners.  This is the class counted by the coefficients of \(M(t)^r\) in
Lemma~\ref{lem:coefficient-comparison}; no Catalan count of unrestricted plane
trees is being used.
\end{remark}

\begin{lemma}[No twist-sequence record]
\label{lem:no-twist-record}
Fix a scheme edge, its two endpoint cuts, the finite endpoint convention
\(\theta\), and the decorated two-rooted envelope produced by
Lemma~\ref{lem:edge-expansion-injection}.  After this envelope is placed back
into the fixed cyclic word \((z^kz^{*k})^m\), the same-sign or opposite-sign
type of every pair along the restored degree-two chain is uniquely determined.
In particular, a chain of length \(\ell\) contributes no factor \(2^\ell\).
\end{lemma}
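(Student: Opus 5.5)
The claim is essentially a determinism statement: once the envelope is placed back, the sign type of each restored pair is a function of the fixed data. The argument therefore reduces to showing that every ingredient determining a pair's sign type is already fixed. We do not count anything new; we exhibit the pair type as the output of a deterministic procedure.

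\emph{Step 1: the positions of $I$ and $J$ are determined.} By Lemma~\ref{lem:edge-expansion-injection}, the decoder $\operatorname{Dec}_\theta$ takes the record ${\mathcal R}_{\mathcal E}$, places the two endpoint cuts and the finite cut offsets into the fixed word $(z^kz^{*k})^m$, and reads every unmarked interval in cyclic order. The template ${\mathcal T}$ prescribes how many complete blocks and which nested rooted branches occupy each interval. Consequently the absolute positions of $i_1,\ldots,i_\ell$ and $j_1,\ldots,j_\ell$ in the cyclic word are functions of the fixed data: the endpoint cuts, $\theta$, and the envelope. We will make this explicit by induction along the face tour. Starting at the first endpoint cut, each successive position is obtained from the previous one by adding the (recorded) length of the branch occupying the intervening gap, and these lengths are multiples of $2k$ plus recorded offsets. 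Hence the position of $i_{r+1}$ is determined by that of $i_r$ and ${\mathcal T}$, and similarly for $J$.

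\emph{Step 2: the sign of each position is determined.} The sign of an occurrence position $s\in\{1,\ldots,2L\}$ is fixed by the boundary word alone. It is plus if $s \bmod 2k\in\{1,\ldots,k\}$ and minus otherwise. Hence the signs $\epsilon(i_r)$ and $\epsilon(j_{\theta(r)})$ are determined.

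\emph{Step 3: the pair type follows.} By the normal form in Lemma~\ref{lem:edge-expansion-normal-form}, the pairs of the chain are exactly $\{i_r,j_{\theta(r)}\}$. The pair is untwisted if and only if $\epsilon(i_r)\neq\epsilon(j_{\theta(r)})$, and twisted otherwise, by the dictionary between untwisted and twisted bands in the definition of pair skeletons. Thus the type sequence is a function of the data, so the map from chains to records is injective without storing any twist string. In particular, no factor $2^\ell$ arises.

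\emph{Main obstacle.} The delicate step is Step~1: one must justify that the envelope, whose internal signs are the canonical leading ones rather than the true ones, still determines the true lengths of the gaps. The approach would be to note that the encoding records lengths and block counts, not signs. A rooted branch removed by pruning is genuinely leading by Lemma~\ref{lem:pruned-leading-branch}, so its length is $2k$ times its number of blocks plus the recorded offsets. The chain edges themselves each occupy exactly one position on each side. Hence the positional arithmetic never consults the canonical signs, and the true signs are read only afterwards, in Step~2.
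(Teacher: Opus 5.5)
Your proposal is correct and follows the paper's proof: the decoder of Lemma~\ref{lem:edge-expansion-injection} restores the positions of $I$ and $J$, $\theta$ gives the pairs $i_r\leftrightarrow j_{\theta(r)}$, and each pair's same-sign or opposite-sign type is then read off from the fixed signs of $(z^kz^{*k})^m$, so no binary sequence is stored. One minor point is that your positional induction does not need the claim that branch lengths are multiples of $2k$ plus \emph{recorded} offsets, which is not accurate in general because a pendant branch can start mid-block at an offset that is not separately recorded; instead, each gap simply has length twice the number of branch edges in it, and the shape $\mathcal T$ already fixes that number.
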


\begin{proof}
By the reconstruction in Lemma~\ref{lem:edge-expansion-injection}, the
endpoint cuts, block counts, and boundary offsets restore the two side strings
\[
  I=(i_1,\ldots,i_\ell),\qquad J=(j_1,\ldots,j_\ell)
\]
as occurrence labels in the original cyclic word.  The endpoint convention
\(\theta\) then gives the pairs \(i_r\leftrightarrow j_{\theta(r)}\).  The
sign attached to each occurrence label is fixed once and for all by the word
\((z^kz^{*k})^m\).  Therefore each restored pair is automatically classified
as opposite-sign or same-sign.  This classification is read from the labels; it
is not an additional sequence of binary choices.
\end{proof}

\begin{example}[Schematic scheme-edge expansion]
For one expanded scheme edge, the two sides of its degree-two chain may be
read as
\[
  I=(i_1,\ldots,i_\ell),\qquad
  J=(j_1,\ldots,j_\ell).
\]
If the endpoint convention is the reversed one, the path pairs are
\[
  i_1\leftrightarrow j_\ell,\quad
  i_2\leftrightarrow j_{\ell-1},\quad \ldots,\quad
  i_\ell\leftrightarrow j_1 .
\]
The envelope records the two side strings, the endpoint convention, the root
corners, the finite cut offsets, and the leading-tree branches in the gaps.  It
does not record a separate sign choice for each displayed pair.  Once the
labels \(i_r,j_{\ell+1-r}\) are placed back in the fixed word
\((z^kz^{*k})^m\), their signs determine whether that pair is untwisted or
twisted.
\end{example}

\begin{lemma}[Coefficient comparison for marked leading forests]
\label{lem:coefficient-comparison}
Let \(M(t)=1+tM(t)^{k+1}\), and fix \(C_*<\infty\).  There is a constant
\(C_0=C_0(k,C_*)\) such that, whenever \(1\le p\le C_*m\),
\[
  0\le r,j\le C_*p,\qquad |h|\le C_*p,
\]
one has
\[
  \bigl(C_*(m+|h|+r+1)\bigr)^j [t^{m+h}]M(t)^r
  \le
  F_{k,m}(C_0m)^{C_0p},
\]
with the convention that a coefficient with negative index is zero.
\end{lemma}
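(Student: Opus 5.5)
Set \(N=m+h\) and write the bound as a ratio \([t^{N}]M(t)^r/F_{k,m}\) times the prefactor. The prefactor is harmless. Since \(p\le C_*m\), we have \(m+|h|+r+1\le (1+2C_*^2)m+1\le C'm\). Hence \(\bigl(C_*(m+|h|+r+1)\bigr)^j\le (C''m)^{C_*p}\), which is already of the allowed form \((C_0m)^{C_0p}\). Negative \(N\) gives zero, and \(r=0\) gives \([t^N]1\le 1\le F_{k,m}\). So the work is to show
\[
  [t^{N}]M(t)^r\le F_{k,m}(Cm)^{Cp}
\]
for \(1\le r\le C_*p\) and \(N\ge0\).

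The first step is the Lagrange--Bürmann formula for powers of the series \(M=1+tM^{k+1}\). Put \(M=1+u\), so \(u=t(1+u)^{k+1}\). Then
\[
  [t^N]M(t)^r=\frac{r}{(k+1)N+r}\binom{(k+1)N+r}{N}
\]
for \(N\ge1\), and the value is \(1\) for \(N=0\). This is the standard Raney-number identity and reduces to \(F_{k,N}\) when \(r=1\). If one wants to avoid the exact formula, the crude route is to bound the coefficient by the number of ordered \(r\)-tuples of \((k+1)\)-ary trees. That count is at most \(\binom{N+r-1}{r-1}\max F_{k,N_i}\). Using \(\binom{N+r-1}{r-1}\le (N+r)^{r}\le (Cm)^{C p}\) is enough, provided one also knows \(F_{k,N'}\le F_{k,N}\) for \(N'\le N\).

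Then compare with \(F_{k,m}\) in two moves. First, pass from \(r\) to \(1\). We have \(\binom{(k+1)N+r}{N}\le \binom{(k+1)N}{N}\bigl((k+1)N+r\bigr)^{r}\), since each unit increase of the top index multiplies the binomial by at most \(((k+1)N+r)/((k+1)N+r-N)\le (k+1)N+r\). This gives \([t^N]M^r\le ((k+1)N+1)\,F_{k,N}\,(Cm)^{r}\). Second, shift the index from \(N=m+h\) to \(m\) using adjacent ratios. From the closed form,
\[
  \frac{F_{k,N+1}}{F_{k,N}}=\frac{\prod_{i=1}^{k}\bigl((k+1)N+i+1\bigr)\cdot\ldots}{\ldots}
\]
is bounded above by \(\gamma_k^2\) up to a factor \(1+O(1/N)\), and bounded below by a positive constant \(c_k\). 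The lower bound is what matters when \(h>0\). It is also true that the ratio is \(\ge1\) for \(N\ge1\), so \(F_{k,\cdot}\) is increasing. Hence for \(h>0\) we get \(F_{k,m+h}\le (C\gamma_k^2)^{h}F_{k,m}\le C^{p}F_{k,m}\), and for \(h<0\) we get \(F_{k,m+h}\le F_{k,m}\) by monotonicity. Collecting terms, the polynomial factor \((k+1)N+1\le Cm\le (Cm)^{p}\) is absorbed because \(p\ge1\). Together with \((Cm)^{r}\) and \(r\le C_*p\), this yields \((C_0m)^{C_0p}\).

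The main obstacle is essentially bookkeeping: making every constant depend only on \(k\) and \(C_*\), uniformly in the admissible range. Two points need care. One is the edge case of small \(m\) where \(N\) may be \(0\) or small; there \(F_{k,N}\ge1\) and one takes \(C_0\) large. The other is checking that the adjacent-ratio bound \(F_{k,N+1}/F_{k,N}\le C\) holds uniformly for all \(N\ge0\). It does, because the ratio tends to \(\gamma_k^2\) and is finite for each \(N\).
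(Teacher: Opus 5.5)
Your proof is correct and is essentially the paper's argument. Both rest on the Lagrange-inversion formula $[t^N]M(t)^r=\frac{r}{(k+1)N+r}\binom{(k+1)N+r}{N}$ together with elementary adjacent-ratio bounds. The only difference is the order of the steps: you first remove $r$ through the top-index ratios $\binom{S+1}{N}/\binom{S}{N}$ and then move $N$ to $m$ along $F_{k,\cdot}$, whereas the paper walks from $A(m,1)=F_{k,m}$ to $A(m+h,r)$ in $O(|h|+r)$ adjacent steps.

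There are two small slips to fix. First, for $h>0$ it is the upper bound on $F_{k,N+1}/F_{k,N}$ that matters, not the lower one; monotonicity is what handles $h<0$, and your final computation does use them this way. Second, the optional ``crude route'' would need $\prod_i F_{k,N_i}\le F_{k,N}$, not merely monotonicity; this is true by log-convexity of the Fuss--Catalan moment sequence.
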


\begin{proof}
Put \(N=m+h\).  If \(N<0\), there is nothing to prove.  Otherwise
\(N\le Cm\), because \(p\le C_*m\).  The factor
\((C_*(m+|h|+r+1))^j\) is at most \((Cm)^{C_*p}\).  It is written this way
rather than as \((t\,d/dt)^j\) because ordered boundary marks may be placed on
root-only or empty components; the harmless \(+1\) covers the case \(N=r=0\).
If \(N=0\), the coefficient of \(M(t)^r\) is \(1\), and if \(r=0\), then
\(M(t)^0=1\) so the coefficient is nonzero only for \(N=0\).  In both cases
the displayed factor is absorbed into the right-hand side after increasing
\(C_0\).  For \(N\ge1\) and \(r\ge1\), Lagrange inversion gives
\[
  [t^N]M(t)^r
  =
  \frac{r}{(k+1)N+r}\binom{(k+1)N+r}{N}.
\]
Write
\[
  A(N,r):=[t^N]M(t)^r
  =
  \frac{r}{(k+1)N+r}\binom{(k+1)N+r}{N}
  \qquad (r\ge1).
\]
The adjacent ratios are polynomially bounded.  If
\(S=(k+1)N+r\), then
\[
  \frac{A(N,r+1)}{A(N,r)}
  =
  \frac{r+1}{r}\,
  \frac{(k+1)N+r}{kN+r+1}
  \le Cm,
\]
and the reciprocal ratio is also at most \(Cm\).  Similarly,
\[
  \frac{A(N+1,r)}{A(N,r)}
  =
  \frac{S}{S+k+1}\,
  \frac{\prod_{u=1}^{k+1}(S+u)}
       {(N+1)\prod_{u=1}^{k}(kN+r+u)}
  \le Cm,
\]
and the reverse adjacent ratio is bounded by the same type of estimate.  Here
all ratios are taken only between nonzero coefficients, and we use only that
all relevant indices are between \(1\) and \(Cm\), while \(k\) is fixed.  The
finitely many boundary cases with \(N\) or \(r\) bounded by a constant depending
only on \(k\) are absorbed by increasing \(C_0\); away from these cases, each
displayed factor and its reciprocal is bounded by a fixed power of \(m\).

Starting from
\[
  F_{k,m}
  =
  \frac1{km+1}\binom{(k+1)m}{m},
\]
which is \(A(m,1)\), one reaches \(A(m+h,r)\) by
\(O(|h|+r)\le Cp\) adjacent changes.  Hence
\[
  [t^{m+h}]M(t)^r\le F_{k,m}(C_0m)^{C_0p}.
\]
Multiplying by the boundary-mark factor and increasing \(C_0\) proves the
estimate.
\end{proof}

\begin{lemma}[Deterministic boundary-arc assignment]
\label{lem:arc-assignment}
Fix a scheme record and the recorded attachment positions in the cyclic word.
Then every complementary arc between consecutive recorded positions is assigned
uniquely either to a sector incident to a scheme vertex or to one side of a
specified scheme edge.  In particular, expansions of distinct scheme edges
carry no extra interleaving data beyond the recorded boundary tour.
\end{lemma}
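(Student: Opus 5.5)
The plan is to read the assignment directly from the structure of the one-face quotient graph \(G(\mathcal P)\) and its boundary tour. First, recall how \(K(\mathcal P)\) is built: prune degree-one branches, then suppress maximal degree-two paths. This gives a canonical surjection from the edges of \(G(\mathcal P)\) onto the scheme. Every edge of \(G(\mathcal P)\) is of exactly one of two kinds. Either it lies in a pruned tree component, which hangs from a unique two-core vertex. Or it lies on the degree-two path that is the preimage of a unique scheme edge. By Lemma~\ref{lem:edge-expansion-normal-form}, the internal path vertices of such a preimage are the degree-two vertices of the two-core. So an attached tree is hung either at a scheme vertex or at an internal vertex of a specified scheme-edge preimage.

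Next I would look at the single boundary tour. Each occurrence position \(s\in\{1,\ldots,2L\}\) is one side of a glued edge of \(G(\mathcal P)\), traversed at a definite corner. I assign an occurrence according to the edge it traverses. If it traverses a tree edge hanging at a scheme vertex \(x\), it goes to the sector of \(x\) between the two consecutive scheme half-edges at which that tree is attached. If it traverses a path edge of scheme edge \(e\), or a tree edge hanging from an internal vertex of that path, it goes to the side \(I\) or \(J\) of \(e\) that the tour is following at that moment. The side is well defined for the following reason. While the tour moves along a degree-two chain, and through any branches hanging off it, it stays on one side of the chain's regular neighbourhood. It can switch sides only by passing through an endpoint scheme vertex. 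At that point it crosses a recorded attachment position, namely a root half-edge cut.

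Then I would check that this is constant on each complementary arc. The recorded positions include every entry and exit of the tour at a scheme half-edge, because these are exactly the \(O(p)\) attachment cuts in the fixed scheme record. Between two consecutive such cuts, the tour therefore stays inside a single sector or on a single side of a single scheme edge. Uniqueness follows because the scheme record fixes the rooted boundary tour and the cyclic order at each vertex. Given these, the \(j\)-th complementary arc in cyclic order is matched to the \(j\)-th corner-or-side item in the scheme's own boundary tour. No further choice enters. This also shows there is no interleaving data: the order in which different scheme-edge expansions appear along the word is dictated entirely by the scheme tour.

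The main obstacle is the degenerate cases: loops, the single-cycle exception, and the non-orientable or twisted sides. For a Möbius neighbourhood, the two "sides" \(I\) and \(J\) are joined through the band. So I must check that the side label is still fixed by which root half-edge the arc starts from, together with the convention \(\theta\). I would first try to handle this by declaring the side of an arc to be the one determined by its starting cut, with the orientation read from the rooted tour, exactly as in the loop clause of Lemma~\ref{lem:edge-expansion-normal-form}. I would then verify that any twisting is absorbed into the finite set of values of \(\theta\).
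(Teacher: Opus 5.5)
Your proposal is correct and follows essentially the same route as the paper. The paper reads each complementary arc off the pair of consecutive items in the recorded rooted boundary tour: two successive corners at a scheme vertex give a sector, and the two boundary appearances of one side of a scheme edge give that side. It gets uniqueness exactly as you do, since any other assignment would change the recorded tour or side pairing. Your extra regular-neighbourhood argument (the tour stays on one side of a degree-two chain and switches only at a scheme vertex, i.e.\ at a recorded cut) justifies what the paper treats as immediate. It also settles your M\"obius/loop worry: the single scheme vertex cuts the band's boundary into two side arcs, and the remaining finite side/orientation ambiguity is exactly what \(\theta\) records, as in the paper.
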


\begin{proof}
The scheme record includes the rooted boundary tour of the one-face scheme.
This tour is a cyclic list of the scheme corners and scheme half-edges as they
are encountered along the unique face.  Once the attachment positions of these
listed objects are fixed in the original cyclic word, each complementary arc is
the interval between two consecutive listed objects.  If the two endpoints are
successive corners around a scheme vertex, the arc is the corresponding vertex
sector.  If they are the two consecutive boundary appearances on one side of a
scheme edge, the arc is that side of the edge expansion; the scheme-edge
pairing and finite endpoint/side convention specify the matching side-arc.
Any different assignment would change either the rooted boundary tour or the
recorded side pairing, hence would be a different scheme record.
\end{proof}

\begin{lemma}[Expansion counting inside a fixed scheme]
\label{lem:scheme-expansion}
Fix a rooted signed scheme with \(p\) half-edges and corners, including the
finite endpoint/side convention attached to each scheme edge, and fix the
\(O(p)\) occurrence positions of the original word at which the scheme is
attached.  Then
the number of compatible expansions of the scheme edges,
together with all rooted tree branches attached to those expansions and to the
scheme vertices, is at most
\[
  F_{k,m}(C_0m)^{C p},
\]
where \(C_0\) and \(C\) depend only on \(k\).
\end{lemma}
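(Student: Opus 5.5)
The plan is to assemble the count from Lemmas~\ref{lem:edge-expansion-normal-form}--\ref{lem:coefficient-comparison}. Fix the scheme record and the \(O(p)\) attachment positions with their cut offsets. By Lemma~\ref{lem:arc-assignment}, every complementary arc of the cyclic word \((z^kz^{*k})^m\) is assigned deterministically, either to a vertex sector or to a prescribed side of a prescribed scheme edge. So an expansion is determined by independent data on these pieces, with no interleaving choices.

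For a vertex sector, Lemma~\ref{lem:pruned-leading-branch} says the content is a rooted leading tree-like branch, up to finitely many boundary offset marks. For a scheme edge, Lemma~\ref{lem:edge-expansion-injection} injects the expansion, for fixed \(\theta\), into a decorated two-rooted leading envelope with a bounded number of marked corners. Lemma~\ref{lem:no-twist-record} guarantees that no sign sequence along the chain needs to be stored. The number of edge conventions \(\theta\) is \(C^{p}\), since each scheme edge has only finitely many of them.

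Next, list all sectors and envelopes in boundary-tour order. This gives an injection into ordered forests of block-compatible leading objects with \(r=O(p)\) roots and \(j=O(p)\) marked corners. The total number of complete blocks is \(m+h\), where \(|h|=O(p)\) accounts for blocks destroyed by cuts inside \(z^k\) or \(z^{*k}\). Forests with \(r\) rooted components and \(m+h\) internal vertices number \([t^{m+h}]M(t)^r\). Placing \(j\) ordered marked corners on boundary positions, or equivalently distributing block counts between consecutive marks, costs at most \((C_*(m+|h|+r+1))^j\). The reason is that the contour of a forest with \(N\) internal vertices and \(r\) roots has \(O(N+r+1)\) corners.

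If \(p\le C_*m\), Lemma~\ref{lem:coefficient-comparison} bounds the product by \(F_{k,m}(C_0m)^{C_0p}\). Multiplying by the \(C^p\) choices of \(\theta\) and the finite offset data then gives \(F_{k,m}(C_0m)^{Cp}\). If instead \(p>C_*m\), I would use a crude bound: the total number of expansions is at most the number of pairings of \(2L\) positions, which is \((2L)^{2L}\le (Cm)^{2km}\le (Cm)^{Cp}\). This is dominated by the claim because \(F_{k,m}\ge1\).

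The main obstacle is to make sure the forest encoding really is block-compatible, so that one may compare with \(M(t)^r\) and not with an unrestricted Catalan count. Concretely, after cutting at \(O(p)\) positions, each interval between marks must consist of complete \(z^kz^{*k}\) blocks, apart from the recorded finite offsets. The envelope's canonical leading labelling must then match the generating function \(M(t)=1+tM(t)^{k+1}\) on each component. I would check this by applying the first-return decomposition from the proof of Lemma~\ref{lem:leading} to each component, using the recorded offsets to absorb partial blocks at the boundary. This shows that each component is a leading tree-like object and that the forest's block total differs from \(m\) by \(O(p)\).
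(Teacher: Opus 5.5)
Your proposal is correct and follows the paper's proof essentially step for step. The shared steps are: cutting the word at the recorded attachment positions with finite offsets; the deterministic arc assignment of Lemma~\ref{lem:arc-assignment}; encoding sectors by leading branches and scheme edges by two-rooted envelopes, with signs recovered as in Lemma~\ref{lem:no-twist-record}; bounding the ordered marked forests by $(C(m+|h|+r+1))^j\,[t^{m+h}]M(t)^r$; and concluding with Lemma~\ref{lem:coefficient-comparison}.

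The differences are cosmetic. The paper treats $\theta$ as already fixed in the scheme record, so your extra $C^p$ factor is harmless overcounting. For $p>C_*m$, instead of your crude pairing bound, the paper simply notes that $p\le Cm$ always holds, because scheme half-edges and corners sit at positions of a word of length $2km$; so that case never arises once $C_*$ is chosen large.
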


\begin{proof}
The following global encoding keeps the base count \(F_{k,m}\).

\emph{Boundary normalization.}
Cut the fixed cyclic word \((z^kz^{*k})^m\) at all recorded scheme attachment
positions and scheme corners.  If one of these cuts falls inside a block
\(z^k\) or \(z^{*k}\), record its offset in the block and the sign of the
block.  This creates only \(O(p)\) additional boundary marks and contributes at
most \((2k)^{Cp}\) choices.  After these finite offsets are recorded, the
remaining arcs are read as standard block-compatible arcs.  The only effect is
an \(O(p)\) shift in the number of complete \(z^kz^{*k}\)-blocks, since each
cut affects only the block in which it lies.  If several cuts fall in the same
block, their relative order is already fixed by the occurrence positions.
Thus the partial-block information is recorded only at the \(O(p)\) cuts; no
labels in the interiors of long arcs are chosen independently.

\emph{Scheme arc assignment.}
By Lemma~\ref{lem:arc-assignment}, the fixed scheme record assigns each cut arc
either to a vertex sector or to one side of a specified scheme edge.  The arcs
assigned to distinct scheme edges are therefore disjoint ordered intervals and
appear as independent ordered factors in the forest product.  There is no
further interleaving choice to record.

\emph{Edge envelopes and sector branches.}
Given a compatible expansion, encode every vertex sector by the rooted leading
tree-like branch filling that sector.  This is justified by
Lemma~\ref{lem:pruned-leading-branch}: pruned components are bridge trees, so
their one-face contour crosses every edge once in each direction.  For every
scheme edge, apply the normalization map of
Lemma~\ref{lem:edge-expansion-injection} to the two side-arcs of its
degree-two chain, together with the branches attached to that chain.  The
output is a decorated two-rooted leading envelope; the finite endpoint
convention has already been included in the scheme record.  This envelope is a
counting object, not a claim that the original signed chain is itself leading.
The original same-sign or opposite-sign status will be read after the labels
are restored in the fixed word.

\emph{Forest product and injectivity.}
Listing these sector branches and scheme-edge envelopes in the cyclic order
specified by the scheme gives an ordered forest of leading tree-like objects
with at most \(Cp\) roots and at most \(Cp\) marked corners.  The marked
corners consist of the roots of the sector branches, the two roots of each
two-rooted envelope, and the finitely many endpoint or cut offset marks; they
do not include the individual edges of a long degree-two chain.  Equivalently,
the forest stores only the \(O(p)\) cut data; the lengths and nested branches of
the chains are encoded by the block counts and leading-tree shapes inside the
resulting intervals.

This encoding is injective.  From the ordered forest, the recorded attachment
positions, and the partial-block offsets, place the roots back into the cut
arcs of the fixed word and read every unmarked interval in cyclic order.  This
restores all occurrence labels on every sector and on the two side strings
\(I,J\) of each expanded scheme edge.  Then
Lemma~\ref{lem:edge-expansion-injection} recovers the edge expansion by
pairing
\[
  i_r\longleftrightarrow j_{\theta(r)}
\]
for the appropriate convention \(\theta\).  The same-sign or opposite-sign
status of each restored pair is read from the fixed signs of the restored
occurrence labels.  Hence no variable-length twist sequence, and no
degree-two-chain label list, is part of the record; this is the content of
Lemma~\ref{lem:no-twist-record}.

\emph{Coefficient bound.}
Let \(M(t)\) be the rooted leading-tree generating function from
Lemma~\ref{lem:leading}, \(M(t)=1+tM(t)^{k+1}\).  A two-rooted envelope is
counted by choosing one of its distinguished corners as the root of the
underlying leading tree and treating the other distinguished corner as a
marked corner; marking both distinguished corners would only add another
polynomial factor.  Thus an ordered forest with \(r\) rooted leading
components is counted by \(M(t)^r\).  If this forest has \(N\) complete blocks,
then it has at most \(C(N+r)\) boundary corners; allowing one extra
corner, the number of ordered choices of \(j\) marked boundary corners is at
most \((C(N+r+1))^j\).  This covers empty or root-only components, where the
formal operator \((t\,d/dt)^j\) would be too small.  The constants also absorb
the \(2k\) possible offsets inside a block.  Since the scheme has \(O(p)\)
sectors and edge sides, we have
\[
  r,j\le Cp .
\]
Also \(p\le Cm\), because the scheme half-edges and corners are represented
by positions in the original face tour of length \(2L=2km\).  The \(O(p)\)
partial-block corrections replace the coefficient \([t^m]\) by
\([t^{m+h}]\) with \(|h|\le Cp\).  Therefore the number of possible forest
envelopes is bounded by a constant times
\[
  \sum_{\substack{0\le r,j\le Cp\\ |h|\le Cp}}
  \bigl(C(m+|h|+r+1)\bigr)^j [t^{m+h}]M(t)^r .
\]
By Lemma~\ref{lem:coefficient-comparison}, each summand is at most
\[
  F_{k,m}(C_0m)^{C_0p}.
\]
The number of triples \((r,j,h)\) is polynomial in \(p\).  Since
\(p\le Cm\), this polynomial factor is absorbed into
\((C_0m)^{Cp}\) after increasing \(C_0,C\); the case \(p=0\) is the leading
tree case.  Enlarging the constants once more absorbs the
finite scheme-edge conventions and the partial-block offset records.
\end{proof}

\begin{proof}[Proof of Lemma~\ref{lem:opening}]
If \(\chi({\mathcal P})=0\), the map has \(L\) edges and \(L+1\) vertices and
is therefore a tree.  In a tree, removing any edge separates the contour into
two components, so the two traversals of that edge must cross the cut in
opposite directions in the underlying graph.  Hence every pair joins a plus
occurrence to a minus occurrence.  The one-face tree contour is planar and compatible with the block
word \((z^kz^{*k})^m\), and is counted by Lemma~\ref{lem:leading}.

Assume now that \(s=\chi({\mathcal P})\ge1\), and let
\(G({\mathcal P})\) be the connected multigraph obtained from the quotient
polygon: its vertices are the blocks of \(\pi({\mathcal P})\), and its edges
are the pairs of occurrences.  Then
\[
  |E(G({\mathcal P}))|-|V(G({\mathcal P}))|+1=s .
\]
We use the following canonical core of this embedded graph.  First delete
recursively every degree-one edge; this removes only tree branches attached to
the cyclic part of the graph.  Then suppress every maximal path whose internal
vertices have degree two.  In the exceptional case where the remaining
two-core is a single cycle, replace it by one vertex with one loop.  The
result is a rooted one-face \emph{scheme} \(K({\mathcal P})\).  It inherits
the cyclic order of half-edges from the original face tour.  Its root is chosen
canonically as follows: start from the original root corner and move along the
boundary tour until the first retained core half-edge is met.  If the core is
the single-cycle exception, the resulting loop is rooted at that first
encountered core half-edge.  If the original root lies in a pruned branch, this
same rule records the attachment sector at which the branch rejoins the core.
No additional root-position datum inside the pruned branch is needed: the
absolute occurrence labels in the fixed boundary word are retained, and the
interval containing the original root is read back in that fixed cyclic order.
The suppressed degree-two paths are not discarded and are not treated as tree
branches.  They are the edge-expansions of the scheme.
Loops and multiple edges are allowed throughout; a loop is counted with its two
scheme half-edges, possibly incident to the same scheme vertex.
The single-cycle exception is treated by the same scheme record: the retained
loop is a scheme edge with two side strings and one finite endpoint/side
convention, and Lemma~\ref{lem:scheme-expansion} counts its full expansion as a
two-rooted edge envelope.  Thus the edges of the original cycle are not
mistaken for pruned bridges and are not forced to have opposite signs.

The scheme has size \(O(s)\).  Indeed, after the single-cycle exception has
been dealt with, every vertex of the scheme has degree at least three.  If the
scheme has \(e_K\) edges and \(v_K\) vertices, then
\[
  e_K-v_K+1=s,\qquad 2e_K\ge3v_K .
\]
Hence \(v_K\le2s-2\) and \(e_K\le3s-3\).  Enlarging the constant covers the
single-loop case, so the number \(p\) of scheme half-edges and scheme corners
is at most \(C s\).

The injective encoding starts with the abstract rooted
signed scheme: its \(p\le Cs\) labelled half-edges, their pairing into scheme
edges, the cyclic order around each scheme vertex, the rooted boundary tour,
and the finite endpoint/side convention attached to each scheme edge.  These
are the data needed later to assign complementary boundary arcs to vertex
sectors and scheme-edge sides.  The number of abstract rooted one-face schemes
of size \(p\), including these finite side conventions, is at most
\((Cp)^{Cp}\), hence at most \((Cs)^{Cs}\), which is bounded by \((CL)^{Cs}\).
Next record the occurrence labels in the original face tour at which these
scheme half-edges and scheme corners sit.  There are at most \(Cs\) such
labels, so there are at most \((2L)^{Cs}\) possibilities.

The remaining data are the expansions of the scheme edges and the tree
branches attached to them and to the scheme vertices.  Lemma~\ref{lem:scheme-expansion}
applies with \(p\le Cs\).  Here the lemma counts a degree-two chain as a
two-rooted scheme-edge network, not as a forest.  Thus
long same-sign twisted chains, such as a twisted cycle, are included in the
expansion data.  The lemma gives at most \(F_{k,m}(Cm)^{Cs}\) possibilities
for these expansions.  Combining this with the estimates for the abstract
scheme and the marked occurrence labels, and using \(L=km\), we obtain
\[
  \#\{{\mathcal P}:\chi({\mathcal P})=s\}
  \le
  F_{k,m}(CL)^{Cs}(Cm)^{Cs}
  \le
  F_{k,m}(C_1m)^{D_1s},
\]
where \(C_1,D_1\) depend only on \(k\).  This also covers \(s=0\), after
increasing the constants if necessary.
\end{proof}

\begin{proof}[Proof of Lemma~\ref{lem:surplus}]
We work only at the level of occurrence pairings and corner partitions.  This
avoids the ambiguity that would arise if one tried to modify a single edge of
an actual closed word while keeping its adjacent edges fixed.

For each ordered variable-edge class of \(\mathbf i\), choose two principal
occurrences: if the class contains both signs, choose the first plus occurrence
and the first minus occurrence; otherwise choose its first two occurrences.
Pair these two principal occurrences.  All remaining occurrences are called
surplus.  Their number is
\[
  2L-2q=2a.
\]
Pair the surplus positions in increasing cyclic order.  This is a canonical
choice, so it introduces no additional choices in the encoding.  Together with
the principal pairs this gives a pair skeleton \({\mathcal P}\).

Let \(\pi_{\mathbf i}\) be the vertex partition of the original word.  The
principal pairs impose endpoint equalities already satisfied by
\(\pi_{\mathbf i}\).  Each surplus pair may impose at most two additional
endpoint identifications.  If we add these identifications to
\(\pi_{\mathbf i}\), we obtain a partition \(\pi'\) with at least
\(v(\mathbf i)-2a\) blocks and satisfying all pair equalities of
\({\mathcal P}\).  Since \(\pi({\mathcal P})\) is the finest partition with
this property,
\[
  v({\mathcal P})\ge |\pi'|\ge v(\mathbf i)-2a.
\]
Therefore
\[
  \chi({\mathcal P})
  =L+1-v({\mathcal P})
  \le L+1-v(\mathbf i)+2a
  =b+2a .
\]

To count the information lost in passing from \(\mathbf i\) to
\({\mathcal P}\), record the \(2a\) surplus positions, and for each of
them the principal pair, among at most \(L\) choices, to whose original
ordered variable-edge it belongs.  These records recover the original
edge-occurrence partition.  In this recovery step we do not keep the artificial
endpoint identifications created by pairing the surplus positions in
\({\mathcal P}\); those artificial pairs served only to produce a skeleton
whose defect is controlled.

Let \(\pi_0\) be the finest partition generated by these original
edge-occurrence equalities.  The graph associated with \(\pi_0\) is connected:
it is obtained from the cyclic word \(0,1,\ldots,2L=0\) by quotienting corners
and edge occurrences, and a quotient of a connected closed walk is connected.
Loops are allowed; a loop is simply an ordered edge whose tail and head lie in
the same \(\pi_0\)-block, and it requires no special record beyond the same
endpoint data.  The graph has \(q=L-a\) ordered edge classes, hence
\(|\pi_0|\le q+1\).  The original vertex partition is a coarsening of
\(\pi_0\), so it is recovered by gluing at most
\[
  |\pi_0|-v(\mathbf i)\le q+1-v(\mathbf i)
  =(L-a)+1-(L+1-b)=b-a\le b
\]
additional pairs of vertices.  The last inequality is just the admissibility
bound \(v(\mathbf i)\le q+1\), equivalently \(b\ge a\).  Each gluing is
specified by two corner positions.  To make the encoding injective, we choose
these gluing records canonically: order the \(\pi_0\)-blocks by the least
corner they contain, and order the blocks of the original vertex partition by
the least corner contained in them.  Lexicographic order below refers to these
two total orders.  In each block of the original partition,
look at the induced complete graph on the \(\pi_0\)-blocks contained in it and
record the lexicographically first spanning tree of pairwise gluings between
those blocks.  The union of these spanning trees has exactly
\(|\pi_0|-v(\mathbf i)\) edges and recovers the original vertex partition from
\(\pi_0\).  The records are listed in lexicographic order of the original
partition blocks, and within each block in lexicographic order of the
spanning-tree edges.

Altogether the record list has \(O(a+b)\) entries, each with at most
\(O(L^2)\) choices.  Since \(L=km\), the number of records is bounded by
\((Cm)^{D(a+b)}\).
\end{proof}

\begin{proof}[Proof of Proposition~\ref{prop:defect-counting}]
Let \(a=L-q\) and \(b=L+1-v\), so \(r=a+b\).  By
Lemma~\ref{lem:surplus}, the word is encoded by a pair skeleton
\({\mathcal P}\) with \(\chi({\mathcal P})\le b+2a\le2r\), together with at
most \((Cm)^{Dr}\) possible local records.  Lemma~\ref{lem:opening}, summed
over \(0\le s\le2r\), gives at most
\[
  F_{k,m}\sum_{s=0}^{2r}(Cm)^{D_1s}
  \le
  F_{k,m}(Cm)^{D'r}
\]
possibilities for \({\mathcal P}\), after increasing \(D'\).  Indeed, if
\(r\ge1\), then the factor \(2r+1\) in the sum is at most \(Cm\), since
\(r\le2L+1=O_k(m)\), and is absorbed into \((Cm)^{D'r}\).  The case \(r=0\)
is included by the leading count.  Multiplying by the record count and
increasing \(D\) proves the estimate.
\end{proof}

\section{The Proof of the Moment Estimate}
\label{app:moment}

\begin{proof}[Proof of Theorem~\ref{prop:moment}]
We prove Theorem~\ref{prop:moment} for truncated entries satisfying
\[
  |w_{ij}|\le \eps_n\sqrt n,
  \qquad
  \eps_n(\log n)^B\to0
\]
for sufficiently large \(B\).

Let \(\mathbf i\) be an admissible word.  Put
\[
  q=q(\mathbf i),\qquad v=v(\mathbf i),\qquad
  a=L-q,\qquad b=L+1-v.
\]
Then \(d(\mathbf i)=a+b\).  Since every ordered variable-edge appears at least
twice and the entries are bounded by \(\eps_n\sqrt n\), we have
\[
  \left|
  \E\prod_{s=1}^{2L}\zeta_s(\mathbf i)
  \right|
  \le
  (\eps_n\sqrt n)^{2a}
  =
  \eps_n^{2a} n^a
\]
for large \(n\).  Indeed, fix an ordered variable-edge \(e=(u,v)\), and let
\(r_e=r_e(\mathbf i)\).  The factors with \(e_s(\mathbf i)=e\) are either
copies of \(w_{uv}\) or copies of \(\ol{w_{uv}}\), according to the sign of the
occurrence.  Hence their joint contribution is bounded in absolute value by
\[
  \E|w_{uv}|^{r_e}
  \le
  (\eps_n\sqrt n)^{r_e-2}\E|w_{uv}|^2
  =
  (\eps_n\sqrt n)^{r_e-2}.
\]
Multiplying this estimate over all \(q\) distinct ordered variable-edges and
using \(\sum_e(r_e-2)=2(L-q)=2a\) gives the displayed bound.

The contribution of all words with fixed \(a,b\) is therefore bounded by
\[
  n^{-L}\,
  n^v\,
  \eps_n^{2a} n^a
  \cdot
  \#\{\text{classes with }L-q=a,\ L+1-v=b\}.
\]
Because \(v=L+1-b\), and because the classes with these fixed values of
\(a,b\) are a subset of all classes with defect \(a+b\), this is bounded by
\[
  n\,
  \eps_n^{2a}
  n^{a-b}
  \cdot
  \#\{\text{classes with defect }a+b\}.
\]
Since \(v\le q+1\), we have
\[
  b=L+1-v\ge L-q=a.
\]
Write \(c=b-a\ge0\).  Then \(a+b=2a+c\), and the preceding contribution is
\[
  n\,\eps_n^{2a}n^{-c}
  \cdot
  \#\{\text{classes with defect }2a+c\}.
\]
By Proposition~\ref{prop:defect-counting},
\[
  \#\{\text{classes with defect }2a+c\}
  \le F_{k,m}(Cm)^{D(2a+c)}.
\]
Summing over \(a,c\ge0\) gives
\begin{align*}
  \E\Tr\bigl((X^kX^{*k})^m\bigr)
  &\le
  nF_{k,m}
  \sum_{a,c\ge0}
  \left((Cm)^{2D}\eps_n^2\right)^a
  \left((Cm)^D n^{-1}\right)^c .
\end{align*}
After \(k,A\) and the constants in Proposition~\ref{prop:defect-counting} have
been fixed, choose \(B\) large enough that
\((CA\log n)^{2D}\eps_n^2\to0\).  The hypothesis
\(\eps_n(\log n)^B\to0\) gives this, for instance, for any \(B>D\) after
absorbing constants.  Then
\[
  (CA\log n)^{2D}\eps_n^2\to0.
\]
For \(1\le m\le A\log n\),
\[
  (Cm)^{2D}\eps_n^2
  \le (CA\log n)^{2D}\eps_n^2=o(1),
  \qquad
  (Cm)^Dn^{-1}
  \le (CA\log n)^Dn^{-1}=o(1).
\]
Hence both geometric sums are \(1+o(1)\), uniformly for
\(1\le m\le A\log n\).  Therefore
\[
  \E\Tr\bigl((X^kX^{*k})^m\bigr)
  \le
  nF_{k,m}(1+o(1)).
\]
Using \eqref{eq:F-growth}, choose \(n\) so large that
\(1+o(1)\le 1+\eta/\gamma_k^2\).  Since \(m\ge1\),
\[
  F_{k,m}(1+o(1))
  \le
  \gamma_k^{2m}\left(1+\frac{\eta}{\gamma_k^2}\right)
  \le
  (\gamma_k^2+\eta)^m .
\]
Thus
\[
  \E\Tr\bigl((X^kX^{*k})^m\bigr)
  \le
  n(\gamma_k^2+\eta)^m
\]
for all sufficiently large \(n\), uniformly over \(m\le A\log n\).
\end{proof}

\bibliographystyle{plain}
\bibliography{reference}

\end{document}